\documentclass[12pt,reqno]{amsart}
\usepackage{amsfonts,amssymb,amsmath,amsopn,amsthm,graphicx}
\usepackage{amsxtra, mathrsfs}
\usepackage{colordvi}
\usepackage[usenames,dvipsnames]{color}
\usepackage{amsfonts,amssymb,amsbsy,amsmath,amsthm,dsfont}


\usepackage{amsthm}
\usepackage{amsbsy}
\usepackage{amstext}
\usepackage{amsfonts}
\usepackage{mathcomp}
\usepackage{mathtools}
\usepackage{dsfont}
\usepackage{tabularx}
\usepackage{latexsym}
\usepackage{amssymb}
\usepackage{esint}

\usepackage{lipsum}
\usepackage[many]{tcolorbox}

\usepackage[colorlinks,pdfpagelabels,pdfstartview = FitH,bookmarksopen = true,bookmarksnumbered = true,linkcolor = blue,
plainpages = false,hypertexnames = false,citecolor = red,pagebackref=false]{hyperref}
\usepackage{cite}

\usetikzlibrary{decorations.pathreplacing}

\linespread{1.1} \numberwithin{equation}{section}

\newtcolorbox{leftbrace}{%
	enhanced jigsaw, 
	breakable, 
	frame hidden, 
	overlay={%
		\draw [
		decoration={brace,amplitude=0.5em},
		decorate,
		ultra thick,
		]
		(frame.south west)--(frame.north west);
	},
	parbox=false,
}

\usepackage{amssymb,amsmath,graphicx,amsthm,a4wide,wrapfig,caption,subcaption,epstopdf}
\usepackage[latin1]{inputenc}
\usepackage{enumitem}

\usepackage{verbatim}

\usepackage{lineno}
\usepackage{textcomp}
\usepackage{mathtools,hyperref}
\usepackage{cleveref}

\usepackage{setspace,esint}





\newcommand{\Hmm}[1]{\leavevmode{\marginpar{\tiny%
			$\hbox to 0mm{\hspace*{-0.5mm}$\leftarrow$\hss}%
			\vcenter{\vrule depth 0.1mm height 0.1mm width \the\marginparwidth}%
			\hbox to
			0mm{\hss$\rightarrow$\hspace*{-0.5mm}}$\\\relax\raggedright #1}}}

\newcommand{\bb}{{\mathbf{\bar {b}}}}
\newcommand{\bt}{{\mathbf{\tilde{b}}}}
\newcommand{\Gwb}{{\overline{\Omega}}}


\newcommand{\Pw}{\partial \Omega}

\newcommand{\loc}{{\rm loc}}

\newcommand{\N}{\mathbb{N}}

\newcommand{\R}{\mathbb{R}}

\newcommand{\Dir}{\mathrm{Dir}}
\newcommand{\pwd}{\partial\Omega_{\mathrm{Dir}}}
\newcommand{\pwr}{\partial\Omega_{\mathrm{Rob}}}
\newcommand{\wb}{\overline{\Omega}\setminus \partial\Omega_{\mathrm{Dir}} }

\newtheorem{theorem}{Theorem}[section]

\newtheorem{cor}[theorem]{Corollary}
\newtheorem{thm}[theorem]{Theorem}
\newtheorem{lem}[theorem]{Lemma}
\newtheorem{lemma}[theorem]{Lemma}
\newtheorem{proposition}[theorem]{Proposition}
\newtheorem{example}[theorem]{Example}

\newtheorem{definition}[theorem]{Definition}
\newtheorem{defi}[theorem]{Definition}

\newtheorem{rem}[theorem]{Remark}


\theoremstyle{definition}

\newtheorem{assumptions}[theorem]{Assumptions}

\numberwithin{equation}{section}
\newcommand{\diver}{\mathrm{div}\,}
\newcommand{\RN}[1]{%
	\textup{\uppercase\expandafter{\romannumeral#1}}%
}
\newcommand{\dx}{\,\mathrm{d}x}
\newcommand{\dy}{\,\mathrm{d}y}

\newcommand{\dt}{\,\mathrm{d}t}

\newcommand{\ds}{\,\mathrm{d}s}

\newcommand{\dsigma}{\,\mathrm{d}\sigma}

\newcommand{\core}{C_0^{\infty}(\Omega)}

\newcommand{\be}{\begin{equation}}
\newcommand{\ee}{\end{equation}}
\newcommand{\bea}{\begin{eqnarray}}
\newcommand{\eea}{\end{eqnarray}}
\newcommand{\bean}{\begin{eqnarray*}}
	\newcommand{\eean}{\end{eqnarray*}}


\newcommand{\opname}[1]{\mbox{\rm #1}\,}
\newcommand{\supp}{\opname{supp}}

\newlength{\wex}  \newlength{\hex}
%

\newcommand{\ass}[1]{Let Assumptions~\ref{assump1} hold  in a bounded Lipschitz domain $\Gw$}

\def\ga{\alpha}     \def\gb{\beta}       \def\gg{\gamma}

       \def\vgf{\varphi}    \def\gh{\eta}
            \def\gl{\lambda}

\def\Gw{\Omega}              

\begin{document}

	\title[Hardy weights for mixed boundary value problems]{Optimal Hardy-weights for elliptic operators with mixed
		boundary conditions}
	
	
	
%

	\author{Yehuda Pinchover}
	
	\address{Yehuda Pinchover,
		Department of Mathematics, Technion - Israel Institute of
		Technology,   Haifa, Israel}
	
	\email{pincho@technion.ac.il}
	
	\author {Idan Versano}
	
	\address {Idan Versano, Department of Mathematics, Technion - Israel Institute of
		Technology,   Haifa, Israel}
	
	\email {idanv@campus.technion.ac.il}
	\begin{abstract}
		We construct families of optimal Hardy-weights for a subcritical linear second-order elliptic operator $(P,B)$ with degenerate mixed boundary conditions.  By an optimal Hardy-weight for a subcritical operator we mean a nonzero nonnegative weight function $W$ such that $(P-W,B)$ is critical, and null-critical with respect to $W$. Our results rely on
		a recently developed criticality theory for  positive solutions of the corresponding mixed boundary value problem.
		
		\medskip
		
		\noindent  2000  \! {\em Mathematics  Subject  Classification.}
			Primary  \! 35A23; Secondary  35B09, 35J08, 35J25.\\[1mm]
			\noindent {\em Keywords:}  Green function, Hardy inequality, minimal growth, Robin boundary condition.
	\end{abstract}
	\maketitle

  
		%
%
	\section{Introduction}

The classical  Hardy inequality states that if $n\geq 3$, then
\begin{equation}\label{eq:classical_Hardy}
\int_{\R^n\setminus\{0\}}|\nabla \phi|^2\dx\geq
\left ( \frac{n-2}{2}\right)^2\int_{\R^n\setminus \{0\}}
\frac{|\phi|^2}{|x|^2}\dx \qquad \forall \phi\in C_0^{\infty}(\R^n\setminus \{0\}). 
\end{equation}
It is well known that the constant $( (n-2)/2)^2$ in \eqref{eq:classical_Hardy} is sharp but not achieved in the appropriate Hilbert space.
In the past four decades,  
Hardy-type inequalities were developed for  vast classes of operators with different types of boundary conditions, see for example, \cite{DFP,KPP,FT,KL,DP,EHL} and references therein.
In \cite{DFP}  the authors established a general method to generate an explicit
optimal Dirichlet-Hardy-weight for a general (not necessarily symmetric) subcritical linear elliptic
operators either in divergence form or in non-divergence form defined on a domain $\Gw\subset \R^n$ or on a noncompact manifold. In some definite
sense, an optimal Hardy-weight is `as large as possible' Hardy-weight.

In the present paper, we utilize the approach developed in \cite{DFP,PV} to produce  families of optimal Hardy-inequalities 
 for a general linear, second-order, elliptic operator with degenerate mixed boundary conditions. Our approach relies on  
 criticality theory of positive weak solutions for elliptic operators with mixed boundary conditions which has been recently established in \cite{PV}. Though our methods are similar to those in \cite{DFP,PV}, where (generalized) Dirichlet boundary conditions are considered, a more intricate treatment is required when mixed boundary conditions are considered. 
 
 We remark that in \cite{KL} (resp.,\cite{EHL}) a variational approach was taken in order to produce Hardy-inequalities  in a  Lipschitz (resp., $C^2$-convex) domain   for the Laplace (resp., $p$-Laplace)  operator with degenerate  Robin boundary conditions on $\pwd$. 
  Our approach does not assume convexity of the domain,  and provides, in particular,  a nontrivial improvement of
 a certain (Robin) Hardy-inequality  obtained  in \cite{KL} for the Laplacian in the exterior of the unit ball (see Example~\ref{example:2}). See also \cite{EHR} for related results.
 
 We recall some essential definitions and results  which 
 are discussed in detail in \cite{PV2}.
 Let  $P$ be  a second-order, linear, elliptic operator  with real measurable coefficients which is  defined on a domain $\Gw\subset \R^n$. We assume that $P$ is  in the divergence form  
 \begin{equation} \label{div_P}
 Pu:=-\diver \left[A(x)\nabla u +  u\bt(x) \right]  +
 \bb(x) \cdot \nabla u   +c(x)u \qquad x\in\Gw.
 \end{equation}
Let $\pwr$ be a relatively open $C^1$-portion of $\partial \Gw$, and consider the oblique boundary operator
 \begin{equation}\label{B_weak_for}
 Bu:=\beta(x) \big(A(x) \nabla u+u\bt(x)\big)\cdot \vec{n}(x) +\gamma(x) u \qquad x\in \pwr,
 \end{equation}
 where $\vec{n}(x)$ is the outward unit normal vector to $\partial \Gw$ at $x\in \Pw_{\mathrm{Rob}}$,  $\xi\cdot\eta$ denotes the Euclidean inner product of $\xi, \eta\in \R^n$, and $\beta,\gamma$ are  real measurable functions defined on $\Pw_{\mathrm{Rob}}$ such that $\gb>0$ on $\pwr$.
 The boundary  of $\Gw$ is then naturally decomposed to a disjoint union of its Robin part $\Pw_{\mathrm{Rob}}$, and its Dirichlet part $\Pw_{\mathrm{Dir}}$. That is,  $\partial\Omega=\Pw_{\mathrm{Rob}}\cup \Pw_{\mathrm{Dir}}$, where  $\Pw_{\mathrm{Rob}}\cap \Pw_{\mathrm{Dir}}=\emptyset$.
 If further $\bb=\bt$ in $\Gw$, we say that $(P,B)$ is {\em symmetric} in $\Gw$.
 \begin{defi}\label{def:HW}
 	\em{
 	Let $\Gw$ be a Lipschitz domain in $\R^n$ and consider the  operator $(P,B)$ defined on $\wb$,  where $P$ and $B$ are of the forms \eqref{div_P} and \eqref{B_weak_for}, respectively.
 	\begin{itemize}
 		\item We say that $(P,B)$ is {\em nonnegative in $\Gw$ } (in short $(P,B)\geq 0$ in $\Gw$) if there exists  a  positive weak  solution to the mixed boundary value problem
 		\begin{equation}\label{p,b}
 		\begin{cases}
 		Pu=0 &  \text{in } \Omega, \\\tag{{\bf P,B}}
 		Bu = 0 &  \text{on } \Pw_{\mathrm{Rob}}.
 		\end{cases}
 		\end{equation} 
 		\item We say that $W \gneqq  0$ is a {\em Hardy-weight} for $(P,B)$ in $\Gw$ if 
 		$(P-W,B)\geq 0$ in $\Gw$.
 		\item A Hardy-weight  $W$ is said to be {\em a Dirichlet-Hardy-weight} if $\pwr=\emptyset$.
 		\item A nonnegative operator $(P,B)$ in $\Gw$ 
 		is said to be {\em subcritical} (resp., {\em critical}) in $\Gw$  		
 		if $(P,B)$ admits (resp., does not admit) a Hardy-weight for $(P,B)$ in $\Gw$. 
 	\end{itemize} 
}
 \end{defi}

By \cite[Theorem 5.19]{PV2},  $(P,B)$ is subcritical in $\Gw$ if and only if $(P,B)$ admits a positive minimal Green function $G_{P,B}^{\Gw}(x,y)$ in $\Gw$. On the other hand,  $(P,B)$ is critical in $\Gw$ if and only if the equation $(P,B)u=0$ admits (up to a multiplicative constant) a unique positive supersolution $\phi$ in $\Gw$. In fact, $\phi$  is  a positive solution, called the {\em (Agmon) ground state} (for the definition of a ground state see Definition~\ref{def_gs}). Furthermore, $(P,B)$ is critical in $\Gw$ if and only if $(P^\star,B^*)$ is critical in $\Gw$, where $(P^\star,B^*)$ is the formal adjoint of $(P,B)$ in $L^2(\Gw)$ \cite[Corollary 5.20]{PV2}.

Next we recall the notion of optimal Hardy-weight and optimality at $\infty$ \cite{DFP}.
\begin{definition}\label{def_opt_Hardy}
	{\em  Let $W$ be a Hardy-weight for $(P,B)$ in $\Gw$. A Hardy-weight $W$ is said to be {\em optimal in $\Gw$} if $(P-W,B)$ is critical in $\Gw$ and $\int_{\Gw}\phi \phi^*W\dx=\infty$, where $\phi$ and $\phi^*$ are respectively, the ground states of $(P-W,B)$ and $(P^\star-W,B^*)$ in $\Gw$.  In this case we say that $(P-W,B)$ is {\em null-critical } with respect to the weight $W$. 			
	}
\end{definition}
\begin{defi}\label{rem_1.3}{\em 
		We say that a Hardy-weight $W$ is {\em optimal at infinity in $\Gw$} if for any   $K\Subset \overline{\Gw}$,
		$\partial K \cap \pwd =\emptyset$, and $\partial K \cap \pwr \Subset \pwr$ with respect to the relative topology on $\pwr$ (in short, $K\Subset_R \Gw$), we have 
		$$\sup\{\gl \in \R \mid (P-\gl W,B)\geq 0 \mbox{ in } \Gw\setminus K\}=1.$$   	
	}
\end{defi}
\begin{rem}{\em
 The definition of an optimal Hardy-weight in \cite{DFP} includes the requirement that $W$ should be optimal at infinity in $\Gw$. But, for the case $\pwr=\emptyset$, it is proved  in \cite{KP}  that if $P - W$ is null-critical with respect to $W$ in $\Gw$, then $W$ is also optimal at infinity. The same proof applies to the more general case considered in the present paper, hence, in Definition~\ref{def_opt_Hardy} we avoid the requirement of optimality at infinity.  
}
\end{rem}	
 \begin{defi}\label{def_Gp}
	\em{
		 Let $(P,B)$ be a subcritical operator in $\Gw$, and let  $G(x,y):=G^{\Gw}_{P,B}(x,y)$ the corresponding positive  minimal Green function. Fix $0 \lneqq \vgf\in\core$.
		The {\em Green potential with a density} $\vgf$ is the function 
		$$
		G_{\vgf}(x):=\int_{\Gw}G(x,y)\vgf(y)\dy.
		$$  	
	}
\end{defi}
Next, we define the notion of a function vanishing near the Dirichlet boundary part and at $\infty$. 
 We first need to define an exhaustion of $\Gw$ subordinated to $\wb$.
\begin{defi}\label{def_exhaus}
	\em{
		A sequence $\{\Omega_k\}_{k\in \mathbb{N}} \subset \Omega$ 
		is called an {\em exhaustion}  of $\wb$ if it is
		an increasing sequence of Lipschitz domains such that $\Omega_k\Subset_R \Omega_{k+1}\Subset_R  \Gw$, and $\bigcup\limits_{k\in \N} \overline{\Omega_k}=\overline{\Gw}\setminus \Pw_{\mathrm{Dir}}$. 
		For $k\geq 1$ we denote : 
		$$  \partial \Gw_{k,\mathrm{Rob}}:=\mathrm{int}(\partial \Gw_k\cap \pwr) \quad 
		\partial \Gw_{k,\Dir}:=\partial \Gw_k\setminus \partial \Gw_{k,\mathrm{Rob}}, \quad 
		\Omega_{k}^{*}:=(\wb)\setminus \overline{\Omega_k}. $$
	}
\end{defi}
\begin{rem}\label{rem:boundness}
	\em{
			For the existence of such an exhaustion see the appendix  in \cite{PV2}.	
			We note that the construction of $\{\Gw_k\}_{k\in \N}$ ensures that  $\partial\Gw^{*}_{k,\mathrm{Rob}}$ is Lipschitz as well.
	}
\end{rem}
\begin{defi}
	\em{
		Let $K\Subset \Gw$ and $f\in C((\overline{\Gw\setminus K})\setminus \partial \Gw_{\mathrm{Dir}})$. 
		We say that 
		\begin{equation}\label{eq:infty_bar}
		\lim_{x\to \infty_{\Dir}}f(x)=0
		\end{equation}
		if for any $\varepsilon>0$ and any exhaustion $\{ \Gw_k\}_{k\in \N}$ of $\wb$, there exists $k_0$ such that 
		$|f(x)|<\varepsilon$ in $\Gw\setminus\Gw_{k_0}$.
	}
\end{defi}
\begin{rem}\label{rem:boundness2}
	\em{
		\begin{enumerate}
			\item 	In the case $\pwr\!=\!\emptyset$, \eqref{eq:infty_bar} is equivalent to the condition 
			$\lim\limits_{x\to \overline{\infty}}f(x)=0$, where $\overline{\infty}$ denotes the ideal point in the one-point compactification of $\Gw$.
			\item Let $K\Subset \Gw$ , then  $f\in C((\overline{\Gw\setminus K})\setminus \pwd)$ satisfying \eqref{eq:infty_bar} is bounded in $\Gw\setminus K$.
		\end{enumerate}
	}
\end{rem}
The first main result of the paper provides an explicit family of optimal Hardy weights. It  reads as follows:
\begin{thm}\label{main_thm}
	Let Assumptions~\ref{assump2} hold in a domain $\Omega \subset \mathbb{R}^n$, ($n \geq 2$) containing $x_0$. Let $(P,B)$ be a subcritical   operator  in $\Omega$ and let $G(x):=G_{P,B}^{\Omega}(x,x_0)$ be its  minimal positive Green function with singularity at $x_0\in \Gw$.  Let  $G_{\varphi}$ be the Green potential with  a density $0\lneqq \varphi \in \core$. Assume that there exists $u\in \mathcal{H}^{0}_{P,B}(\Omega)$  satisfying Ancona's condition
	\begin{equation}\label{Anc_cond}
	\lim_{x \to \infty_{\Dir}} \dfrac{G(x)}{u(x)}=0.
	\end{equation}
	Let $$0\leq  a \leq \dfrac{1}{\sup\limits_{\Gw}{(G_{\varphi} /u)}}\, ,\quad
	f_w(t):=\sqrt{2t-at^2}, \quad \mbox{and }\;\;
	W:=\frac{P \left (uf_w \left ( G_{\varphi} /u\right) \right )}{uf_w \left ( G_{\varphi} /u\right)}\,.
	$$
	Then $(P-W,B)$ is critical  in $\Gw$, and
	\begin{equation}\label{eq:W_explicitly}
	W=\frac{|\nabla(G_{\varphi}/u)|_{A}^2}{(G_{\varphi} /u)^2(2-a G_{\varphi}/u)^2}\qquad  \text{in }  \Gw \setminus \supp(\varphi).
	\end{equation}
	Furthermore, assume that  one of the following regularity conditions  are  satisfied.
	\begin{enumerate}
			\item  $(P,B)$ is symmetric,  $A\in C^{0,1}_{\loc}(\wb, \R^{n^2})$, $\bb=\bt\in C^{\alpha}_{\loc}(\wb,\R^n)$,  $c\in L^{\infty}_{\loc}(\wb)$, and $\pwr\in C^{1,\alpha}$.
				
		\item $\pwr$, $\pwd$ are both relatively open and closed  sets, $\pwr$ is bounded and admits a finite number of connected components, and the coefficients of $P$ are smooth functions in $\Gw$ (or more generally, $A\in C^{\lceil(3n-2)/2\rceil,1}_{\loc}(\Gw,\R^{n^2})$, $\bt\in C^{\lceil(3n-2)/2\rceil,1}_{\loc}(\Gw,\R^{n})$, $\bb\in C^{\lceil(3n-2)/2\rceil-1,1}_{\loc}(\Gw,\R^{n})$, $c\in C^{\lceil(3n-2)/2\rceil-1,1}_{\loc}(\Gw)$). 
\end{enumerate}
	Then  $W$ is an optimal Hardy-weights for $(P,B)$ in $\Gw$.
\end{thm}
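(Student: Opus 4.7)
The plan is to follow the supersolution construction of \cite{DFP} (as refined in \cite{PV}), adapted to the mixed-boundary setting of \cite{PV2}. Set $\psi := G_\varphi / u$ and $v := u f_w(\psi)$; the assumption $a\le 1/\sup_\Omega(G_\varphi/u)$ yields $0\le a\psi\le 1$, so $f_w(\psi)=\sqrt{\psi(2-a\psi)}$ is well defined and nonnegative. The first step is a Picone-type computation: since $Pu=0$ and $PG_\varphi=\varphi$ in $\Omega$, expanding $P(u f_w(\psi))$ gives the identity
\begin{equation*}
Pv = -u\,f_w''(\psi)\,|\nabla\psi|_A^{2} + f_w'(\psi)\,\varphi,
\end{equation*}
valid for $A$ symmetric, with an analogous expression in the non-symmetric case (only the first summand matters outside $\supp(\varphi)$). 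Using $f_w''=-1/f_w^{\,3}$ we get $Pv/v = |\nabla\psi|_A^{2}/f_w(\psi)^{4}$ on $\Omega\setminus\supp(\varphi)$, which is exactly \eqref{eq:W_explicitly}. The boundary identity $Bv=0$ follows because
\begin{equation*}
B(u f_w(\psi)) = f_w(\psi)\,Bu + u f_w'(\psi)\,\beta\,A\nabla\psi\cdot\vec n,
\end{equation*}
and the identities $Bu=0$, $BG_\varphi=0$ conspire to force $\beta\,A\nabla\psi\cdot\vec n=0$ on $\pwr$. Since $f_w'(\psi)\ge 0$ for $a\psi\le 1$, we also have $W\ge 0$ globally, and $v$ is a positive solution of $((P-W)v=0,\,Bv=0)$.

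Next I would establish criticality of $(P-W,B)$. By the characterization recalled after \cite[Corollary 5.20]{PV2}, it suffices to show that $v$ is a positive solution of minimal growth at $\infty_{\Dir}$, i.e.\ an Agmon ground state. Ancona's condition \eqref{Anc_cond} gives $\psi\to 0$ at $\infty_{\Dir}$, hence $v\sim\sqrt{2uG_\varphi}$ there, so in particular $v/u\to 0$ at $\infty_{\Dir}$. Combining this with the mixed-boundary comparison machinery of \cite{PV2} and the DFP cutoff procedure on an exhaustion $\{\Omega_k\}$ of $\Gwb$ shows that $v$ has minimal growth on $\Omega_k^*$ for large $k$; the uniqueness of positive supersolutions up to scalar then yields criticality.

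Under either regularity hypothesis (1) or (2) I would then upgrade to optimality by proving null-criticality, $\int_\Omega \phi\,\phi^* W\,dx=\infty$, where $\phi=v$ and $\phi^*$ is the ground state of $(P^\star-W,B^*)$ (with $\phi^*=v$ in the symmetric case (1)). Using $f_w(\psi)^{2}=\psi(2-a\psi)$,
\begin{equation*}
v^{2}W = \frac{u^{2}|\nabla\psi|_A^{2}}{\psi(2-a\psi)} \ge \tfrac{1}{2}\,u^{2}\frac{|\nabla\psi|_A^{2}}{\psi} = 2\,u^{2}|\nabla\sqrt{\psi}|_A^{2},
\end{equation*}
so in case (1) the divergence of $\int v\phi^* W\,dx$ reduces to that of $\int_\Omega u^{2}|\nabla\sqrt{\psi}|_A^{2}\,dx$. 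The latter I would establish by integrating by parts against the exhaustion $\{\Omega_k\}$: the bulk terms collapse since $Pu=0$ and $PG_\varphi=\varphi$; the Robin contribution on $\partial\Omega_{k,\mathrm{Rob}}$ cancels through $Bu=BG_\varphi=0$; and the surviving term at $\partial\Omega_{k,\Dir}$ has the form of a constant multiple of $\int\varphi\,(-\log\psi)\,dx$ at level $k$, which tends to $+\infty$ because $\psi\to 0$ at $\infty_{\Dir}$. Case (2) is handled analogously after producing $\phi^*$ from a positive $(P^\star,B^*)$-solution; the stronger smoothness of the coefficients and the stronger structure of $\pwr$ is precisely what permits this adjoint construction and justifies the boundary computations.

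The main obstacle is the minimal-growth verification at $\infty_{\Dir}$ under mixed boundary conditions: the DFP argument relies on Ancona-type comparison that exploits vanishing along the entire boundary, whereas here positive supersolutions need not vanish on the Robin parts $\partial\Omega_{k,\mathrm{Rob}}\subset \pwr$, so one must invoke the mixed-boundary comparison principle of \cite{PV2}. A secondary subtlety is the non-symmetric case (2), where $\phi^*$ must be constructed and shown to pair with $v$ in the null-criticality estimate, which is why the regularity hypothesis (2) is strictly stronger than (1).
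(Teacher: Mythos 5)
Your construction of $v=uf_w(G_\varphi/u)$, the computation of $W$ outside $\supp\varphi$ (using $f_w''=-1/f_w^3$), and the boundary cancellation $Bv=f_w(\psi)Bu+\beta u f_w'(\psi)A\nabla\psi\cdot\vec n=0$ all agree with the paper and are correct (modulo a factor of $u$ in the $f_w'(\psi)\varphi$ term). However, there is a genuine gap in your criticality step. You observe $v/u\to 0$ at $\infty_\Dir$ and then appeal to a ``mixed-boundary comparison machinery'' together with a vague ``DFP cutoff procedure,'' but $v/u\to 0$ is not usable: $u$ satisfies $(P-W)u=-Wu\le 0$, so $u$ is a \emph{subsolution} of $(P-W,B)$, and comparison with it gives nothing about minimal growth of $v$. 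The missing ingredient, which is the entire content of the paper's Theorem~\ref{prop:criticality}, is the \emph{second} solution $f_1(t)=f_w(t)\int_t^1 f_w(s)^{-2}\,ds$ of the one-dimensional equation $-y''-(2t-at^2)^{-2}y=0$; one then sets $h:=uf_1(G_\varphi/u)$, checks that $h\in\mathcal{SH}_{P-W,B}(\Omega)\cap C(\wc\setminus\pwd)$, and verifies $v/h=f_w(\psi)/f_1(\psi)\to 0$ at $\infty_\Dir$ (using $\lim_{t\to 0}f_w/f_1=0$ and Ancona's condition). That quotient, \emph{against a supersolution}, is what the Khas'minski\u{\i}-type criterion (Lemma~\ref{lem:Khasminsky}) consumes to conclude that $v$ is a ground state and $(P-W,B)$ is critical.

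Your null-criticality sketch is also off the paper's route and, as written, does not close. Reducing $\int v\phi^*W\,dx$ to $\int_\Omega u^2|\nabla\psi|_A^2/\psi\,dx$ is fine in case~(1), but the proposed integration by parts leads to cancellation rather than divergence: with $u\equiv 1$ (after ground-state transform) one computes $-\nabla\!\cdot(A\nabla\sqrt{g})=\varphi/(2\sqrt g)+|\nabla g|_A^2/(4g^{3/2})$, so the integral $\int_{\Omega_k}|\nabla\sqrt g|_A^2\,dx$ reproduces itself on the right and leaves only a $\varphi$-term and a boundary flux, with no $\log$-type divergence appearing at $\partial\Omega_{k,\Dir}$. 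The paper instead (i) passes to $(P,B)\mathbf 1=0$ by the ground-state transform, (ii) applies the \emph{coarea formula} on the level sets $\{G_\varphi=t\}$ of the Green potential, and (iii) invokes the divergence theorem, crucially using $A\nabla G_\varphi\cdot\vec n=0$ \emph{everywhere on} $\pwr$, to show the flux $\int_{G_\varphi=t}A\nabla G_\varphi\cdot\vec n\,d\sigma$ is a nonzero constant in $t$ near $0$; divergence of $\int_0^{\cdot}(2t-at^2)^{-1}\,dt$ then finishes. This coarea/flux argument is the step your proposal has to substitute in; the smooth case~(2) is, as you indicate, handled as in DFP once one observes that $\partial\Omega_\xi\cap\pwr=\emptyset$.
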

\begin{rem}
	\em{	
		Theorem \ref{main_thm} is proved   in \cite{DFP} for the specific (classical) case
		$\pwr=\emptyset$ and $a=0$. Moreover, Theorem \ref{main_thm} with $\pwr=\emptyset$ and $a>0$ was proved in \cite{PV}, and provides greater Hardy-weights than the optimal Hardy-weight $W_{\mathrm{class}}$ given in \cite{DFP}, in the sense that
		$$
		W=\dfrac{|\nabla (G_{\vgf}/u)|_A^2}{(G_{\vgf}/u)^2\big (2-a(G_{\vgf/u}) \big )^2}>W_{\mathrm{class}}:=\dfrac{|\nabla(G_\varphi /u)|_{A}^2}{4(G_{\varphi} /u)^2} \qquad \mbox{in } \Gw \setminus (\supp(\varphi)\cup \mathrm{Crit}(G_{\vgf}/u)).
		$$		
Clearly, the above inequality holds true also in the case $\pwr \neq\emptyset$.
	}
\end{rem}
The second main result of the paper is  a generalization of Theorem~\ref{main_thm}. 
We show that  any optimal Dirichlet-Hardy-weights for  the one-dimensional Laplacian in $\R_+$  provides an $n$-dimensional optimal Hardy weights for $(P,B)$ in $\Gw$ (cf. \cite[Theorem 5.2]{PV}). 
\begin{defi}{\em 
	We say that $0\lneqq w\in L^1_{\loc}(\R_+)$ belongs to $\mathcal{W}(\R_+)$ if $w$ is an optimal Dirichlet-Hardy-weight of $Ly:=-y''$ in $\R_{+}$.
}
\end{defi}
For a characterization of optimal Dirichlet-Hardy-weights for Sturm-Liouville operators see \cite[Proposition~3.1]{PV} (see also Proposition~\ref{1D if and only if}). 
\begin{theorem}\label{criticality_linear} \label{thm3}
	Assume that  the operator $(P,B)$, and the functions $G$, $u$, and $G_\vgf$ satisfy the assumptions of Theorem~\ref{main_thm}. 
	
	Let  $w \in \mathcal{W}(\R_+)$, and let $\psi_w(t)$ be the corresponding  ground state. 
	Suppose further that $\psi_w'\geq 0$ on  $\{t=G_{\varphi}(x)/u(x) \mid x\in \Gw \}$, and set
	\begin{equation*}
	W:=\dfrac{P \left (u \psi_w \left ( G_{\varphi} /u\right) \right )}{u \psi_w \left ( G_{\varphi} /u\right)}\,.
	\end{equation*}
	Then, the following assertions are satisfied:
	\begin{enumerate}
	\item	$W\geq 0$ in $\Gw$ and $W:=\left |\nabla (G_{\varphi}/u) \right |_{A}^{2}w(G_{\varphi}/u), ~ \text{in} ~ \Gw \setminus \supp(\varphi).$
	
	\item 			$(P-W,B)$ is critical  in $\Omega$ with ground state $u \psi_w (G_{\varphi}/u)$.
	
	\item Assume that $\pwr$, $\pwd$ are both relatively open and closed  sets, $\pwr$ is bounded and admits a finite number of connected components, and the coefficients of $P$ are smooth functions in $\Gw$ (or, $A\in C^{\lceil(3n-2)/2\rceil,1}_{\loc}(\Gw,\R^{n^2})$, $\bt\in C^{\lceil(3n-2)/2\rceil,1}_{\loc}(\Gw,\R^{n})$, $\bb\!\in \! C^{\lceil(3n-2)/2\rceil-1,1}_{\loc}(\Gw,\R^{n})$, $c \!\in \! C^{\lceil(3n-2)/2\rceil-1,1}_{\loc}(\Gw)$). Then $W$ is optimal at infinity.
	
	\item If $(P,B)$ is symmetric, $A\!\in \! C^{0,1}_{\loc}(\wb)$, $\bb=\bt\in C^{\alpha}(\wb,\R^n)$, and $c\in L^{\infty}_{\loc}(\wb)$, then $(P-W,B)$ is null-critical with respect to $W$,  and therefore, $W$ is an optimal Hardy-weight for $(P,B)$ in $\Gw$.	
\end{enumerate}
\end{theorem}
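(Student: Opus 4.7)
My plan is to lift the prescribed one-dimensional optimal Dirichlet-Hardy weight $(w,\psi_w)$ on $\R_+$ to the mixed boundary setting on $\Gw$ via the transformation $v:=G_\vgf/u$, thereby generalizing Theorem~\ref{main_thm} (which corresponds to the single choice $\psi_w=f_w$). I would prove the four assertions in order using essentially the same toolkit as in \cite{DFP,PV}, with extra bookkeeping required at the Robin portion $\pwr$.

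For part (1), the computation of $W$ proceeds via the chain rule and the defining ODE $-\psi_w''=w\psi_w$ on $\R_+$. In the symmetric case, the identity
\[
u\,P(uh)=-\diver\!\big(u^{2}A\nabla h\big)\qquad\text{(valid since }Pu=0\text{)}
\]
applied with $h=\psi_w(v)$, together with the conservation law $\diver(u^{2}A\nabla v)=-u\vgf$ (which follows from $Pu=0$, $P(uv)=PG_\vgf=\vgf$, and the symmetry of $A$), yields
\[
W=w(v)\,|\nabla v|_A^{2}+\frac{\psi_w'(v)\,\vgf}{u\,\psi_w(v)}.
\]
Nonnegativity is then immediate from $\psi_w\!\ge\!0$, $\psi_w'\!\ge\!0$, $u,\vgf\!\ge\!0$, and on $\Gw\setminus\supp(\vgf)$ the second term vanishes, yielding the announced formula. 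The nonsymmetric case is treated by the same chain-rule calculation while tracking the drift fields $\bb$ and $\bt$; the terms involving $\bb,\bt$ arising from differentiating $u\psi_w(v)$ recombine (using $Pu=0$ and $PG_\vgf=\vgf$ in weak form, as in \cite{PV2}) to give the same $W$.

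For part (2), by (1) the function $\Phi:=u\psi_w(v)$ is a positive solution of $(P-W,B)\Phi=0$ in $\Gw$, so by \cite[Theorem 5.19, Corollary 5.20]{PV2} it suffices to exhibit $\Phi$ as a ground state, for instance by constructing a null-sequence. My plan is to lift a 1D null-sequence $\{\eta_k\}$ for $L-w$ on $\R_+$ (which exists since $w\in\mathcal{W}(\R_+)$) by setting $\phi_k(x):=u(x)\,\eta_k(v(x))$. The boundary identity $u^{2}A\nabla v\cdot\vec n=0$ on $\pwr$, which follows from $Bu=0$ and $BG_\vgf=0$, guarantees $B\phi_k=0$. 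Ancona's condition \eqref{Anc_cond} allows one to truncate appropriately so that $\phi_k$ is a legitimate test function near $\pwd$ and at infinity, and the computation of (1) transfers the null-sequence property from $\{\eta_k\}$ to $\{\phi_k\}$.

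Parts (3) and (4) follow the same lifting philosophy. For (3), on an end $\Gw\setminus K$ with $K\Subset_R\Gw$, the image $v(\Gw\setminus K)$ is contained in a small neighbourhood of an endpoint of $\R_+$, and the 1D optimality at infinity of $w$ (the supremum of $\gl$ with $-\partial_t^{2}-\gl w\ge 0$ being $1$ there) transfers to the analogous supremum for $(P-\gl W,B)$. For (4), null-criticality amounts to showing $\int_\Gw \Phi^{2}W\dx=\infty$. Using (1) and the co-area formula, this integral equals, up to a finite interior piece from $\supp(\vgf)$,
\[
\int_{0}^{\infty}\psi_w(t)^{2}\,w(t)\!\left(\int_{\{v=t\}} u^{2}\,\frac{|\nabla v|_A^{2}}{|\nabla v|}\dH\right)\!\dt,
\]
and the conservation law $\diver(u^{2}A\nabla v)=0$ on $\Gw\setminus\supp(\vgf)$ together with the vanishing Robin flux forces the inner integrals to equal a positive constant $c>0$ independent of $t$; null-criticality of $w$ for $L$ on $\R_+$ then gives $c\int_0^\infty\psi_w^{2}\,w\dt=\infty$. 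The principal obstacle is making this co-area change of variables rigorous in the presence of possible critical points of $v$ and of level sets meeting $\pwr$, which is precisely where the symmetry and regularity assumptions of item (4) are used.
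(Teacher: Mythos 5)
Your plans for parts (1), (3), and (4) track the paper's argument quite closely. Part (1) is the same chain-rule computation combined with the conservation law $\diver\bigl(u^2 A\nabla(G_\vgf/u)\bigr)=-u\vgf$ (the paper invokes (4.13) of \cite{DFP}), and the observation that the $\vgf$-term vanishes off $\supp\vgf$; part (3) is the same transference to the one-dimensional result as in \cite[Theorem~5.2]{PV}; and part (4) is the coarea/divergence-theorem argument with the inner surface integral equal to a constant thanks to the vanishing Robin flux, exactly as in the paper's Theorem~\ref{3.7}, combined with Proposition~\ref{1D if and only if}(3).

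The real issue is in part (2). You propose to prove criticality and identify $u\psi_w(G_\vgf/u)$ as a ground state by lifting a one-dimensional null-sequence $\{\eta_k\}$ to $\phi_k := u\,\eta_k(G_\vgf/u)$. But assertion (2) is stated for arbitrary, possibly nonsymmetric, $(P,B)$, where there is no quadratic form attached to $(P-W,B)$ and hence no null-sequence criterion for criticality; that tool is variational and is only available in the symmetric case (which is precisely why the paper reserves it, implicitly, for part (4)). The paper's proof of (2) is instead a verbatim rerun of Theorem~\ref{prop:criticality}, which relies on the Khas'minski\u{i}-type criterion, Lemma~\ref{lem:Khasminsky}: one forms the second linearly independent one-dimensional solution
\[
\psi_1(t):=\psi_w(t)\int_t^{T}\psi_w(s)^{-2}\,\mathrm{d}s ,
\]
positive on the relevant range of $G_\vgf/u$ because $\int_0^1 \psi_w^{-2}\,\mathrm{d}s=\infty$ by Proposition~\ref{1D if and only if}(2); lifts to $h:=u\,\psi_1(G_\vgf/u)$; checks $(P-W)h\ge 0$ in $\Gw$ and $Bh=0$ on $\pwr$ in the weak sense by the same computation as in \eqref{Puv}--\eqref{Puv2}; and then uses Ancona's condition \eqref{Anc_cond} together with $\psi_w/\psi_1\to 0$ at $0$ to get $u\psi_w(G_\vgf/u)/h\to 0$ as $x\to\infty_{\Dir}$. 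Lemma~\ref{lem:Khasminsky} then simultaneously delivers minimal growth (hence the ground state property, Definition~\ref{def_gs}) and criticality, with no symmetry hypothesis. This supersolution-comparison step is the ingredient your sketch replaces with a null-sequence lift, and the replacement does not cover the nonsymmetric case that assertion (2) requires.

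A minor note on (1): you restrict the identity $uP(uh)=-\diver(u^2A\nabla h)$ to the symmetric case and then say the drift terms recombine in general; what the paper actually uses is the general identity cited from \cite[Eq.~(4.13)]{DFP}, which already handles $\bb\neq\bt$, so no separate recombination argument is needed.
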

The paper is organized as follows. In Section~\ref{sec_prelim}, we introduce the necessary notation and recall some previously obtained results needed in the present paper. We proceed in Section~\ref{sec:optimal}, with proving a  Khas'minski\u{i}-type criterion for the mixed boundary value problem and prove our main results. Finally, in Section \ref{sec:exmaples}, we present a couple of examples.
 \section{Preliminaries and  notation}\label{sec_prelim}
Let $\Omega$ be a domain in $\R^n$, $n\geq 2$, and let $\partial\Omega=\Pw_{\mathrm{Rob}}\cup \Pw_{\mathrm{Dir}}$, where  $\Pw_{\mathrm{Rob}}\cap \Pw_{\mathrm{Dir}}=\emptyset$. We assume that $\Pw_{\mathrm{Rob}}$, the Robin portion of $\partial\Omega$, is a relatively open subset of $\partial \Gw$, and $\Pw_{\mathrm{Dir}}$, the Dirichlet part of $\Pw$, is a closed set of $\partial \Gw$. Moreover, if $\Gw$ is a bounded domain, we further assume that in the relative topology of $\partial \Gw$ we have $\mathrm{int}( \Pw_{\mathrm{Dir}})\neq\emptyset$.
Throughout the paper we use the following notation and conventions:
\begin{itemize}
	
	\item For any $\xi\in \R^n$ and a positive definite symmetric matrix $A\in \R^{n \times n}$, let  
	$| \xi|_A:=\sqrt{A\xi\cdot \xi  }$, where $\xi\cdot\gh$, and also $\langle \xi , \eta \rangle$, denote the Euclidean inner product of $\xi,\gh\in \R^n$.
	
	\item From time to time we use the Einstein summation convention.
	
	\item Let $g_1,g_2$ be two positive functions defined in $\Gw$. We use the notation $g_1\asymp g_2$ in
	$\Gw$ if there exists a positive constant $C$ such
	that
	$$C^{-1}g_{2}(x)\leq g_{1}(x) \leq Cg_{2}(x) \qquad \mbox{ for all } x\in \Gw.$$	
	
	\item Let $g_1,g_2$ be two positive functions defined in $\Gw$, and let $\xi\in \overline\Gw \cup \{\infty\}$. We use the notation $g_1\sim g_2$ near $x_0$ if there exists a positive constant $C$ such that
	$$
	\lim_{x\to \xi}\frac{g_1(x)}{g_2(x)}=C.
	$$	
	\item The gradient of a function $f$ will be denoted either by $\nabla f$ or $Df$.
		\item $\pwd$ and $\pwr$ denote the Dirichlet and Robin parts of $\partial \Gw$, respectively.
	\item  We write $A_1 \Subset A_2$ if  $\overline{A_1}$ is a compact set, and $\overline{A_1}\subset A_2$.
	\item For a subdomain  $\Gw' \! \subset \!\Gw$, let $\partial\Omega'_{\mathrm{Rob}}\! :=\!\mathrm{int}(\partial \Omega' \!\cap \! \pwr)$, and 
	$\partial \Omega'_{\mathrm{Dir}} \!:=\! \partial \Omega' \!\setminus\!  \partial\Omega'_{\mathrm{Rob}}$. 	
	\item Let $\Gw'$ be a subdomain of $\Gw$. We write $\Gw'\Subset_R \Gw$ if $\Gw'\Subset \overline{\Gw}$,
	$\partial \Gw' \cap \pwd =\emptyset$.

	\item $C$ refers to a positive constant which may vary from  line to line.
	\item For any $x=(x_1,\ldots,x_n)\in \R^n$, we use the notation $x=(x',x_n)$.
	\item For any $R>0$ and $x\in \R^n$, $B_R(x)$ is the open ball of radius $R$ centered at $x$.
	
	
	\item For any real measurable function $u$ and a measurable set $\omega\subset \R^n$, 
	$$\inf_{\omega}u:=\mathrm{ess}\inf_{\omega}u, \quad \sup_{\omega}u:=\mathrm{ess}\sup_{\omega}u, \quad  u^+:=\max(0,u), \quad u^-:=\max(0,-u).$$    
	\item For any real function $W$, we say that $W\gneqq 0$ in $\Gw$ if $W\geq 0$ in $\Gw$ and $\sup\limits_{\Gw}W>0$.
	\item  $\mathcal{H}^l$, $1\leq l \leq n$, denotes the $l$-dimensional Hausdorff measure on $\R^n$.
	\item For a differentiable function $f$, $\mathrm{Crit}(f)$ denotes the set of critical points of $f$. 
\end{itemize}
Let $R,K>0$, and let $w$ be a real-valued Lipschitz continuous function defined on $\{x'\in \R^{n-1}:|x'|<R\}$ with 
$$
|w(x')-w(y')|\leq K|x'-y'|
$$
for all $x',y'\in B(x_0',R)$, that satisfy  $w(0)=0$. We define
\begin{align*}				
\Gw[R]=\{x\in \R^n: x_n>w(x'), \; |x|<R \}, \\ 
\sigma[R]=\{x\in \R^n: x_n \geq w(x'), \;|x|=R \}, \\ 
\Sigma[R]=\{x\in \R^n: |x|<R, \; x_n=w(x') \}.
\end{align*}
\begin{defi}[Lipschitz and $C^1$ portions]
	{\em 
		Let $x_0\in \Pw$ and $R>0$ such that $\Gw[x_0,R]:=\Gw\cap B_R(x_0)$ is a Lipschitz (resp.,  $C^1$) domain. The set 
		$\Sigma[x_0,R]=\Pw \cap  B_R(x_0)$ is called a {\em Lipschitz} (resp.,  $C^1$) {\em portion} of $\Pw$.
	} 
\end{defi}
Further, we introduce some functional spaces. 
Let $E\subset \partial \Gw$ be a closed subset in the relative topology of $\partial \Gw$. We define
$\mathcal{D}(\Omega,E):=  C_0^{\infty}(\overline{\Omega} \setminus E)$. So,  $u\in\mathcal{D}(\Omega,E)$ if $u$ has compact support, and 
$\supp u:=\overline{\{x\in \Gw\mid u(x)\neq 0\}} \subset \overline{\Omega} \setminus E$.

For $q \geq 1$, we define $ W^{1,q}_{E}(\Omega)$ to be the closure of 
$\mathcal{D}(\Omega,E)$  with respect to the Sobolev norm of $W^{1,q}(\Omega)$.
We also consider the following spaces: 
$$
L^{q}_{\loc}(\wb):=\{ u \mid  \forall x\in \Gw\cup \Pw_{\mathrm{Rob}},  \;   \exists r_x>0 \mbox{ s.t. } u\in L^q(\Omega\cap B_{r_x}(x))   \},$$	
$$
W^{1,q}_{\loc}(\overline{\Omega}\setminus \pwd):=\{ u\mid \forall x\in \Gw\cup \Pw_{\mathrm{Rob}},  \;   \exists r_x>0 \mbox{ s.t. } u\in W^{1,q}(\Omega\cap B_{r_x}(x))   \}.
$$
If $q=2$ we  write 
$H^{1}_{\loc}(\overline{\Omega}\setminus \pwd):=W^{1,2}_{\loc}(\overline{\Omega}\setminus \pwd)$.
\begin{rem}
	\em{
		For every Lipschitz subdomain $\Gw'\Subset_R \Gw$ and $1< q<\infty$
		the space $W^{1,q}(\Gw')$ is a reflexive Banach space and therefore,  $W^{1,q}_{\Pw_{\mathrm{Dir}}}(\Omega')$
		is reflexive as well. 
	}
\end{rem}
The space $W^{1,q}_{E}(\Gw)$, can be characterized using the following result. 
\begin{lem}[{\cite[Theorem 2.1]{ET},\cite[Remark 2.1]{T}}]\label{lem:alhors_2}
	Let $\Gw$ be a bounded  Lipschitz domain. Let $E\!\subset \! \partial \Gw$ be a compact subset with respect to the  relative topology on $\partial \Gw$, and  let $\mathrm{d}_{E}(x)\!:=\!\mathrm{dist}(x,E)$, where $x \!\in\! \Gw$. 
	Then, for $u\in H^1(\Gw)$ the following assertions are
	equivalent:  
	\begin{itemize}
		\item $\;\; u\in H^1_{E}(\Gw)$.\\
		\item $\;\; \displaystyle{\int_{\Gw}}\left |\frac{u}{\mathrm{d}
			_{E}}\right|^2 \dx<\infty.$\\
		\item  $\;\; \displaystyle{\lim\limits_{r \to 0}\frac{1}{|B_{r}(x)|}\int_{B_r(x)\cap \Gw}}|u|\dx=0\,$ for $\mathcal{H}^{n-1}$-almost every $x\in E$.
	\end{itemize}
\end{lem}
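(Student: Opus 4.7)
My plan is to prove (1)$\Rightarrow$(2)$\Rightarrow$(1) and (1)$\Leftrightarrow$(3), after localizing the problem via a finite partition of unity subordinate to bi-Lipschitz charts flattening $\partial\Gw$ in a neighborhood of $E$ to a portion of $\{x_n=0\}\subset\R^n$. Under such a chart, $d_E(x)$ is comparable to the Euclidean distance to a compact subset of $\R^{n-1}\times\{0\}$, reducing the analysis to a half-space model problem; away from any neighborhood of $E$, $d_E$ is bounded below by a positive constant and all three assertions are routine.

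\textbf{Implications (1)$\Leftrightarrow$(2).} For (1)$\Rightarrow$(2) it suffices to establish the uniform Hardy-type inequality
\[
\int_\Gw\left|\frac{u}{d_E}\right|^{2}\dx \;\le\; C\int_\Gw\bigl(|u|^2+|\nabla u|^2\bigr)\dx \qquad \forall u\in\mathcal{D}(\Gw,E),
\]
after which (2) passes to the $H^1$-closure. In the flattened local model this reduces to the classical Ne\v{c}as-type Hardy inequality for smooth functions vanishing near a compact subset of a Lipschitz boundary. For (2)$\Rightarrow$(1) I would introduce cutoffs $\eta_\eps(x):=\phi(d_E(x)/\eps)$ with $\phi\in C^\infty(\R)$ equal to $0$ on $[0,1]$, to $1$ on $[2,\infty)$, and satisfying $|\phi'|\le 2$. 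Then $u\eta_\eps$ vanishes in a neighborhood of $E$ and, after standard mollification inside $\{d_E>\eps/2\}$, belongs to $\mathcal{D}(\Gw,E)$. Convergence $u\eta_\eps\to u$ in $L^2(\Gw)$ is immediate, while the only new gradient contribution satisfies
\[
|u\,\nabla\eta_\eps|\;\le\;(2/\eps)|u|\,\chi_{\{\eps<d_E<2\eps\}}\;\le\;2\,|u/d_E|\,\chi_{\{\eps<d_E<2\eps\}},
\]
whose $L^2$-norm tends to $0$ as $\eps\to 0$ by (2) and dominated convergence.

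\textbf{Implication (1)$\Leftrightarrow$(3).} This is the trace-theoretic characterization. I would invoke the fine-property theory of Sobolev functions: every $u\in H^1(\Gw)$ admits a quasi-continuous representative whose Lebesgue values exist $\mathcal{H}^{n-1}$-a.e.\ on $\partial\Gw$, because on the boundary of a bounded Lipschitz domain the restriction of $\mathcal{H}^{n-1}$ is absolutely continuous with respect to the relevant Sobolev capacity. Membership in $H^1_E(\Gw)$ is then equivalent to the vanishing of these boundary Lebesgue values at $\mathcal{H}^{n-1}$-a.e.\ point of $E$, which is precisely (3). Concretely, (1)$\Rightarrow$(3) follows from the fact that every member of $\mathcal{D}(\Gw,E)$ has boundary Lebesgue values vanishing on $E$ and that this property is preserved under $H^1$-limits (via the capacitary estimate); the converse (3)$\Rightarrow$(1) follows by combining the vanishing boundary behavior with the cutoff-mollification procedure above, using (3) to ensure that the boundary integral contributions disappear in the limit.

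\textbf{Main obstacle.} The most delicate step is (3)$\Rightarrow$(1): translating a pointwise a.e.\ boundary condition on a set $E$ of arbitrary compact shape into an $H^1$-norm approximation by functions supported away from $E$. It rests on fine-property results for Sobolev functions together with the control of boundary capacity by $\mathcal{H}^{n-1}$ on the Lipschitz manifold $\partial\Gw$. Because $E$ is assumed only compact in $\partial\Gw$ (with no internal regularity), one cannot invoke trace theorems on $E$ itself; the argument must proceed through the Lipschitz regularity of the ambient boundary and a Whitney-type covering of $\partial\Gw\setminus E$.
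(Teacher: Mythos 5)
The paper does not prove this lemma; it is cited from Egert--Tolksdorf \cite{ET} and Toro \cite{T}, so there is no in-paper argument to compare yours against. Your outline is the standard one (a Hardy inequality plus a cutoff construction for (1)$\Leftrightarrow$(2); quasicontinuity and boundary Lebesgue points for (1)$\Leftrightarrow$(3)), and the (2)$\Rightarrow$(1) cutoff step is essentially correct.

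There is, however, a genuine gap in (1)$\Rightarrow$(2). The ``classical Ne\v{c}as-type Hardy inequality'' you invoke concerns $d_{\partial\Gw}$, the distance to the \emph{full} boundary. When $E$ is a proper compact subset of $\partial\Gw$, the inequality $\int_\Gw |u/\mathrm{d}_{E}|^2\dx\le C\|u\|_{H^1(\Gw)}^2$ for $u\in\mathcal{D}(\Gw,E)$ is not automatic: it requires a thickness hypothesis on $E$ itself (that $E$ be $(n-1)$-Ahlfors regular, equivalently an $(n-1)$-set), which is precisely the standing assumption in \cite[Theorem~2.1]{ET} and enters through a pointwise Hardy inequality \`a la Lehrb\"ack, not through the Ne\v{c}as inequality for the whole boundary. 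Without such a hypothesis (1)$\Rightarrow$(2) can fail outright: take $n=2$, $\Gw$ the unit disk, $E=\{(1,0)\}$. Points have zero $H^1$-capacity in the plane, so $H^1_{E}(\Gw)=H^1(\Gw)$ and (1) holds for $u\equiv 1$, while $\int_\Gw|x-(1,0)|^{-2}\dx=\infty$ so (2) fails (and (3) is vacuous since $\mathcal{H}^{1}(E)=0$, so the three conditions are genuinely inequivalent). A complete proof must therefore spell out why the sets $E$ to which the lemma is applied (here $E=\pwd$, which in the paper's bounded case has nonempty relative interior) satisfy the required lower Ahlfors regularity, and the flattened ``half-space model'' reduction you describe must land on the Hardy inequality with distance to an Ahlfors-regular compact subset of the hyperplane, not the one with distance to the hyperplane itself. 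The same omission propagates into your (3)$\Rightarrow$(1): the route in \cite{ET} is (3)$\Rightarrow$(2)$\Rightarrow$(1) via the pointwise Hardy inequality, and the ``cutoff-mollification'' you gesture at cannot close without that quantitative control of $u/\mathrm{d}_{E}$.
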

\begin{cor}\label{rem:ahlors} 
	Let $\Gw\subset \R^n$ be a bounded  Lipschitz domain.
	\begin{enumerate}
		\item 
		Assume that $v$  and $ u$  belong to $H^1(\Gw)$, and $u\geq 0$. Assume further that $v\in H^1_{\pwd}(\Gw)$. Then, $(u-v)^{-}\leq |v|$ in $\Gw$  and  $(u-v)^{-}\in H^{1}_{\pwd}(\Gw)$. 
		\item Let  $u,v\in H^1(\Gw)\cap C(\overline{\Gw})$, and $u\leq v$ on $\pwd$. Then $(v-u)^{-}\in H^1_{\pwd}(\Gw)$.
	\end{enumerate}
\end{cor}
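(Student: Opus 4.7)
The plan is to reduce both parts to the characterization of $H^1_E(\Gw)$ provided by Lemma~\ref{lem:alhors_2} with $E=\pwd$. This is legitimate because $\pwd$ is closed in the relative topology of $\partial\Gw$ and $\Gw$ is a bounded Lipschitz domain, so $\pwd$ is compact. The additional standing fact I will use is that truncations of the form $w\mapsto w^{-}$ preserve $H^1(\Gw)$, together with the chain rule $\nabla w^{-}=-\mathds{1}_{\{w<0\}}\nabla w$.

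For assertion (1), the pointwise bound $(u-v)^{-}\leq |v|$ is immediate: on $\{u\geq v\}$ it is trivial, and on $\{u<v\}$ we have $v>u\geq 0$ and hence $(u-v)^{-}=v-u\leq v=|v|$. Since $(u-v)^{-}\in H^1(\Gw)$ by the truncation property, I only need to verify the weighted integrability condition in Lemma~\ref{lem:alhors_2}. The pointwise bound just established yields
\[
\left|\frac{(u-v)^{-}}{\mathrm{d}_{\pwd}}\right|\leq \left|\frac{v}{\mathrm{d}_{\pwd}}\right|\qquad\text{a.e.~in }\Gw,
\]
and $v\in H^1_{\pwd}(\Gw)$ makes the majorant square-integrable by Lemma~\ref{lem:alhors_2}. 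Consequently $(u-v)^{-}\in H^1_{\pwd}(\Gw)$.

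For assertion (2), set $w:=(v-u)^{-}$. Since $v-u\in H^1(\Gw)\cap C(\overline{\Gw})$, the same properties pass to $w$. The hypothesis $u\leq v$ on $\pwd$ gives $v-u\geq 0$ there, so $w\equiv 0$ on $\pwd$. I would then invoke the third equivalent condition in Lemma~\ref{lem:alhors_2}: for every $x\in\pwd$, continuity of $w$ up to $\overline{\Gw}$ and the vanishing $w(x)=0$ give
\[
\lim_{r\to 0}\frac{1}{|B_r(x)|}\int_{B_r(x)\cap\Gw}|w|\dx=0,
\]
which trivially holds for $\mathcal{H}^{n-1}$-a.e. $x\in\pwd$. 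Therefore Lemma~\ref{lem:alhors_2} yields $w\in H^1_{\pwd}(\Gw)$.

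There is no substantial obstacle here; the only points requiring care are that $\pwd$ meets the compactness hypothesis of Lemma~\ref{lem:alhors_2}, and that the two characterizations in that lemma (the weighted-distance one and the local-mean one) are the right tools for parts (1) and (2) respectively, since part (1) has only a pointwise domination by an $H^1_{\pwd}$ function while part (2) relies on continuous vanishing on the boundary rather than on an integral bound.
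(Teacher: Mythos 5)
Your proof is correct and takes essentially the same route as the paper: for part (1) you bound $(u-v)^{-}$ pointwise by $|v|$ and apply the weighted-distance characterization of Lemma~\ref{lem:alhors_2}, and for part (2) you use continuity to get vanishing at $\pwd$ and invoke the local-mean characterization. The extra care you take in spelling out the pointwise bound and in noting the truncation and compactness preconditions is sound but does not change the argument.
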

\begin{proof}
	Let $E=\pwd$. 
\begin{enumerate}
	\item Lemma \ref{lem:alhors_2} implies that 
	$$v\in H^{1}_{\pwd}\Longrightarrow \int_{\Gw}\left |\frac{v}{\mathrm{d}_E}\right |^2 \dx<\infty.$$
	The inequality $(u-v)^{-}\leq |v|$ is trivial when $u\geq 0$, and therefore,
	$$\int_{\Gw}\left |\frac{(u-v)^{-}}{\mathrm{d}_E}\right |^2 \dx\leq \int_{\Gw}\left |\frac{v}{\mathrm{d}_E}\right |^2 \dx<\infty.$$
		\item The continuity of $u$ and $v$ implies that $(v-u)^{-}$  vanishes continuously on $E$. In particular,
	$\;\; \displaystyle{\lim\limits_{r \to 0}\frac{1}{|B_{r}(x)|}\int_{B_r(x)\cap \Gw}}|(v-u)^{-}|\dx=0\,$ for  every $x\in E$. \qedhere
\end{enumerate}
\end{proof}
Consider an elliptic operator  $P$ of the form \eqref{div_P} and a Robin boundary operator $B$ of the form \eqref{B_weak_for}. 
{\bf Throughout the paper we assume  the following regularity assumptions on $P$, $B$ and $\partial \Gw$:}
\begin{leftbrace}\begin{assumptions}
		\label{assump2} 
		\begin{itemize}
			\item[{\ }]		
			\item $\partial\Omega=\Pw_{\mathrm{Rob}}\cup \Pw_{\mathrm{Dir}}$, where  $\Pw_{\mathrm{Rob}}\cap \Pw_{\mathrm{Dir}}=\emptyset$, and $\Pw_{\mathrm{Rob}}$ is a relatively open subset of $\Pw$.
			\item For each $x_0\in \Pw_{\mathrm{Rob}}$,   $\exists R>0$ such that $\Sigma[x_0,R]$ is a $C^1$-portion of $\partial\Gw$.
			\item $A\!=\!(a^{ij})_{i,j=1}^{n}\in L_\loc^\infty(\Gwb \setminus \Pw_{\mathrm{Dir}}; \R^{n\times n})$ is a symmetric positive definite  matrix valued function which is 
			locally uniformly elliptic  in $\Gwb \setminus \Pw_{\mathrm{Dir}}$, that is, for any compact $K\subset \Gwb \setminus \Pw_{\mathrm{Dir}}$ there exists  $\Theta_K>0$ such that 
			\begin{eqnarray*} 
				\Theta_K^{-1}\sum_{i=1}^n\xi_i^2\leq\sum_{i,j=1}^n
				a^{ij}(x)\xi_i\xi_j\leq \Theta_K\sum_{i=1}^n\xi_i^2 \quad \forall \xi\in \mathbb{R}^n \mbox{ and } \forall x\in K.
			\end{eqnarray*}	
			\item $\bt, \bb\in  L^{p}_{\loc}(\wb;\R^n)$, and $c\in L^{p/2}_{\loc}(\wb)$ for some $p>n$.
			
			\item $\beta>0$, and $\gamma/\beta\in L^{\infty}_{\mathrm{loc}}(\Pw_{\mathrm{Rob}})$.	
		\end{itemize}
	\end{assumptions}
\end{leftbrace}
%

	In the sequel, we  use the following terminology.
\begin{defi}
	\em{
		Let $u\in H^1_{\loc}(\wb)$.
		We say that $u\geq 0 $ on $\pwd$ if $u^{-}\in H^1_{\pwd}(\Gw)$.
	}
\end{defi}
Next, we define (weak) solutions and supersolutions of the mixed boundary value problem
\begin{equation}\label{P,B}
\begin{cases}
Pu=0 &  \text{in } \Omega, \\\tag{{\bf P,B}}
Bu = 0 &  \text{on } \Pw_{\mathrm{Rob}}.
\end{cases}
\end{equation}
\begin{defi}{\em
		We say that $u\in H^{1}_{\loc}(\wb)$ is a  {\em weak solution (resp., supersolution)} of \eqref{P,B} in $\Gw$, if for any (resp., nonnegative) $\phi \in \mathcal{D}(\Omega,\Pw_{\mathrm{Dir}})$   we have
		\begin{equation*}\label{weak_solution}
		\mathcal B_{P,B}(u, \phi):=	\int_{\Omega}\big[(a^{ij}D_ju+u\bt^i )D_i \phi+(\bb^i D_i u+cu )\phi \big] \mathrm{d}x + \int_{\Pw_{\mathrm{Rob}}}\frac{\gamma}{\beta}u \phi \,  \mathrm{d}\sigma  = 0\;
		(\text{resp.,} \geq 0),
		\end{equation*}
		where $\mathrm{d}\sigma$ is the $(n-1)$-dimensional surface  measure. In this case we write $(P,B)u\!=\!0$ (resp., $(P,B)u\!\geq \!0$). Also, $u$ is a subsolution of \eqref{P,B}  if $-u$ is a supersolution of \eqref{P,B}.}	
\end{defi}
The above definition should be compared with the following standard definition of weak (super)solutions of the equation $Pu=0$ in a domain $\Gw$.
\begin{defi}
	\em{
		We say that $u\in H^{1}_{\mathrm{loc}}(\Gw)$ is a \em{weak solution (resp., supersolution)} of the equation $Pu=0$ in $\Gw$ if for any (resp., nonnegative) $\phi\in C_0^{\infty}(\Gw)$ 
		$$ \mathcal B_{P}(u, \phi):=
		\int_{\Omega}[(a^{ij}D_ju+u\bt^i )D_i \phi+(\bb^i D_i u+cu) \phi] \mathrm{d}x  = 0\;
		(\text{resp.,} \geq 0).
		$$
	}
\end{defi}
Hence, any weak solution (resp., supersolution) of the equation $(P,B)u=0$ in $\Gw$ is a weak solution (resp., supersolution)  of $Pu=0$ in $\Gw$. 
In the sequel, by a (super)solution of \eqref{P,B} we always mean a weak (super)solution.  
\begin{rem}
	\em{
	By \cite[Section 3.2]{PV2}, any positive solution of the problem $(P,B)v=0$ in $\Gw$ is H\"older continuous in $\wb$. Furthermore, $u>0$ on $\pwr$.
	}
\end{rem}
The {\em formal $L^2$-adjoint} of the operator $(P,B)$ is given by the operator $(P^*,B^*)$
\begin{equation}\label{P*,B*}
\begin{cases}
P^*u:=-\diver \left[A \nabla u+\bb u \right]+\bt\cdot \nabla u +cu,\\[2mm]\tag{{\bf P*,B*}}
B^*u:=\beta \big( A \nabla u+u\bb\big)\cdot \vec{n}+\gamma u.
\end{cases}
\end{equation}
Indeed, if $\bt,\bt$ and $\partial \Gw$ are sufficiently smooth, then 
for any $\phi,\psi \in \mathcal{D}(\Omega,\Pw_{\mathrm{Dir}})$  satisfying  $B\psi=B^* \phi =0$ on $\Pw_{\mathrm{Rob}}$ in the classical sense, we have
\begin{align*}
&  \int_{\Omega} P(\psi) \phi \dx=
\int_{\Omega} \big(-\diver(A\nabla \psi+\bt\psi )+\bb\cdot \nabla \psi+c\psi\big)\phi\dx = 
\\
& \int_{\Pw_{\mathrm{Rob}}} \psi \phi\big(\frac{\gamma}\beta+  \bb\cdot \vec{n}\big)  \dsigma+
\int_{\Omega}\big((A\nabla \psi+\bt\psi )\cdot \nabla \phi
-\psi \nabla\cdot (\bb \phi)+c\psi\phi \big)\dx=
\\
&
\int_{\Omega} \psi\big(\!-\diver(A\nabla \phi+\bb\phi )+\bt\cdot \nabla \phi+c\phi\big)\!\dx=
\int_{\Omega}\psi P^* (\phi) \dx.
\end{align*}
Next, we define the notion of nonnegativity of the operator $(P,B)$.
\begin{defi}
	\em{
		We denote the  cone  of all positive solutions and positive supersolutions of the equation $(P,B)u=0$ in $\Gw$ by $ {\mathcal H}^0_{P,B}(\Omega)$ and $\mathcal{SH}_{P,B}(\Omega)$, respectively.
		The operator  $(P,B)$ is said to be {\em nonnegative in $\Omega$} (in short, 
		$(P,B)\geq 0$ in $\Gw$) if $\mathcal{H}^{0}_{P}(\Omega)\neq \emptyset$.
	}
\end{defi}
The Fredholm alternative\cite[Section~2]{PV2} implies:
\begin{lemma} \label{Lem_Fred}
	Assume that $(P,B)\geq 0$ in $\Gw$ and let $\Gw'\Subset_R \Gw$ be a Lipschitz subdomain of $\Gw$.
	For any $\Upsilon=g_0+\diver\mathbf{g} \in (H^1_{\partial \Gw'_{\mathrm{Dir}}}(\Gw'))^*$ there exists a unique solution $u_{\Upsilon}\in H^1_{\partial \Gw'_\mathrm{Dir}}(\Gw')$ to the problem
	\begin{equation}\label{eq:Rob_func}
	\begin{cases}
	Pw=g_0+\diver\mathbf{g} & \quad \text{in } \Gw', \\
	Bw= - \mathbf{g}\cdot \vec{n} & \quad \text{on } \partial \Gw'_{\mathrm{Rob}},
	\end{cases}
	\end{equation}
	(in short, $(P,B)w=g_0+\diver \mathbf{g}$ in $\Gw'$) in the following (weak) sense:  
	
	For all $\phi\in \mathcal{D}(\Gw',\partial \Gw'_{\mathrm{Dir}})$,
	$$
	\int_{\Gw'}\!\!\left[\! A\nabla u_{\Upsilon}\!\cdot\! \nabla \phi+u_{\Upsilon}\bt\!\cdot\! \nabla \phi
	+\bb \!\cdot\!\nabla u_{\Upsilon} \phi\!+\!cu_{\Upsilon}\phi \right]\!\!\!\dx+\!\int_{\partial \Gw'_{\mathrm{Rob}}}\!\!\frac{\gamma}{\beta}u_{\Upsilon}\phi \dsigma\!=\!\!\int_{\Gw'}\!( g_0 \phi - \mathbf{g}\cdot \nabla\phi) \!\dx.
	$$	
\end{lemma}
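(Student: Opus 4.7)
My plan is to reduce the mixed boundary value problem \eqref{eq:Rob_func} to a compact perturbation of the identity on $L^2(\Gw')$, apply the Fredholm alternative, and close by proving injectivity via the positive supersolution guaranteed by $(P,B)\ge 0$.

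First I would set $\mathcal V:=H^1_{\partial\Gw'_{\mathrm{Dir}}}(\Gw')$ and consider the bilinear form
$$
\mathcal B(u,\phi):=\int_{\Gw'}\!\big[(a^{ij}D_ju+u\bt^i)D_i\phi+(\bb^iD_iu+cu)\phi\big]\dx+\int_{\partial\Gw'_{\mathrm{Rob}}}\!\frac{\gamma}{\beta}u\phi\dsigma.
$$
Since $\Gw'\Subset_R\Gw$, the coefficients of $P$ and $\gamma/\beta$ are bounded on the relevant sets, so together with the trace theorem on the Lipschitz domain $\Gw'$ we obtain continuity of $\mathcal B$ on $\mathcal V\times\mathcal V$. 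Cauchy--Schwarz combined with local uniform ellipticity of $A$ and a trace interpolation inequality absorbs the lower-order and boundary contributions and produces a G\aa rding estimate
$$
\mathcal B(u,u)+\lambda_0\|u\|_{L^2(\Gw')}^2\ge c_0\|u\|_{\mathcal V}^2\qquad \forall u\in\mathcal V,
$$
for some $\lambda_0, c_0>0$. A Poincar\'e inequality is available on $\mathcal V$ because $\partial\Gw'_{\mathrm{Dir}}$ carries positive $\mathcal H^{n-1}$-measure: the condition $\partial\Gw'\cap\pwd=\emptyset$ in the definition of $\Subset_R$ forces $\partial\Gw'_{\mathrm{Dir}}$ to contain the nontrivial piece $\partial\Gw'\cap\Gw$. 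Lax--Milgram then provides, for every $F\in\mathcal V^*$, a unique $S_{\lambda_0}F\in\mathcal V$ solving $(P+\lambda_0,B)w=F$ in $\Gw'$.

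Rellich--Kondrachov on the Lipschitz $\Gw'$ renders the embedding $\mathcal V\hookrightarrow L^2(\Gw')$ compact, so $K:=\lambda_0 S_{\lambda_0}|_{L^2(\Gw')}$ is a compact operator on $L^2(\Gw')$. Rewriting \eqref{eq:Rob_func} as $(I-K)u=S_{\lambda_0}\Upsilon$ reduces the assertion to the injectivity of $(P,B)$ on $\mathcal V$. For injectivity I would invoke the hypothesis $(P,B)\ge 0$ to pick a positive $v\in\mathcal{SH}_{P,B}(\Gw)$; since $\Gw'\Subset_R\Gw$ and positive solutions of \eqref{P,B} are H\"older continuous up to $\pwr$, $v$ admits a positive lower bound on $\overline{\Gw'}\setminus\partial\Gw'_{\mathrm{Dir}}$. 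For any $u\in\mathcal V$ with $(P,B)u=0$ in $\Gw'$, this bound shows $w:=u/v\in\mathcal V$, and the ground-state (Doob) transform converts the equation for $u$ into a divergence-form equation for $w$ with vanishing zeroth-order term and nonnegative Robin coefficient on $\partial\Gw'_{\mathrm{Rob}}$ (the latter inherited from $Bv\ge 0$ on $\pwr$). Testing against $w$ itself yields
$$
\int_{\Gw'} v^2\, A\nabla w\cdot\nabla w \dx + (\text{nonnegative boundary integral})\le 0,
$$
which forces $\nabla w\equiv 0$, hence $w\equiv 0$ thanks to the vanishing trace of $w$ on $\partial\Gw'_{\mathrm{Dir}}$, and so $u\equiv 0$.

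The main obstacle I foresee is the bookkeeping of the Doob transform under \emph{mixed} boundary conditions: one has to check that after dividing by $v$ the boundary integral surviving on $\partial\Gw'_{\mathrm{Rob}}$ carries a nonnegative coefficient, and this is precisely where the full supersolution hypothesis $Bv\ge 0$ on $\pwr$ is used (not only $Pv\ge 0$ in $\Gw$). Once that sign is secured, the remainder of the argument is the standard Hilbert-space Fredholm package adapted to the mixed setting.
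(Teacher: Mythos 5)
The paper does not actually prove this lemma here: it is imported verbatim from \cite[Section~2]{PV2}, prefaced only by ``The Fredholm alternative \cite[Section~2]{PV2} implies.'' Your Fredholm reduction (G\aa rding estimate for $\mathcal B$ on $H^1_{\partial\Gw'_{\mathrm{Dir}}}(\Gw')$, Lax--Milgram for $(P+\lambda_0,B)$, Rellich compactness, Riesz--Schauder) is the standard route and is sound in outline, modulo the usual bookkeeping with the $L^p_{\loc}$ ($p>n$) coefficients, which give a subcritical estimate rather than literal boundedness.

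The gap is in the injectivity step, and it is a genuine one because the lemma does not assume $(P,B)$ symmetric. After the ground state transform, the transformed operator is
$P^v(w)=-\tfrac{1}{v^2}\diver(v^2A\nabla w)+(\bb-\bt)\cdot\nabla w+\tfrac{Pv}{v}\,w$, so the bilinear identity you get from $\mathcal B_{P^v,B^v}(w,w)=0$ reads
$$
\int_{\Gw'}v^2\,A\nabla w\cdot\nabla w\dx+\int_{\Gw'}v^2(\bb-\bt)\cdot\nabla w\,w\dx+\int_{\Gw'}v\,(Pv)\,w^2\dx+\int_{\partial\Gw'_{\mathrm{Rob}}}\frac{v\,Bv}{\beta}\,w^2\dsigma=0.
$$
All terms except the second are nonnegative, but the drift term $\int v^2(\bb-\bt)\cdot\nabla w\,w\dx$ has no sign when $\bb\neq\bt$, so you cannot conclude $\nabla w\equiv0$. (The term also cannot be absorbed into the gradient term without an unjustified smallness assumption on $\bb-\bt$.) Your argument as written thus proves injectivity only in the symmetric case. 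The standard repair is to invoke the generalized maximum principle, Lemma~\ref{gen_max_weak} (equivalently, item~(4) of Theorem~\ref{eeqiuv_add}), which is established in \cite{PV2} without symmetry: since $(P,B)u=0$ makes both $u$ and $-u$ supersolutions with negative part in $H^1_{\partial\Gw'_{\mathrm{Dir}}}(\Gw')$, Lemma~\ref{gen_max_weak} yields $u\ge0$ and $u\le0$, hence $u\equiv0$. Separately, note a smaller technical point: writing $w=u/v\in H^1$ presumes more than H\"older continuity of $v$; the paper's Definition~\ref{gs+transform} sidesteps this by working directly with the bilinear identity $\mathcal B_{P^v,B^v}(\phi,\psi)=\mathcal B_{P,B}(v\phi,v\psi)$ rather than differentiating $1/v$.
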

 \begin{rem}\label{rem:certain_imp}
 	\em{ 
 		\begin{enumerate}
 			\item  If $(P,B)$ is symmetric, $\pwr\in C^{1,\alpha}$, $A\in C^{0,1}_{\loc}(\wb,\R^{n^2})$,  
 			$\bb=\bt\in C^{\alpha}_{\loc}(\wb,\R^n)$, and $c\in L^{\infty}_{\loc}(\wb)$, then any positive solution of the equation $(P,B)u=0$ in $\Gw$ belongs to $C^{1,\alpha}_{\loc}(\wb)$ \cite[Theorem 5.54]{L}.\\[1mm]
 			\item Assume that $(P,B)\mathbf{1}=0$ in $\Gw$, $A\in C^{0,1}_{\loc}(\wb,\R^{n^2})$, $\bt\in C^{0,1}_{\loc}(\wb,\R^n)$, $\bb\in L^{\infty}_{\loc}(\wb,\R^n)$ and $c\in L^{\infty}_{\loc}(\wb)$. Then, any  solution  of the equation $(P,B)u=0$ in $\Gw$ belongs to $W^{2,2}_{\loc}(\wb)$ (see  \cite[Theorems 5.29 and 5.54]{L}, where $\pwr\in C^2$ is assumed.). In fact,
 			using the `even' extension argument as in \cite[proof of Lemma 5.9 ]{PV2}, one obtains the above regularity result for the case $\pwr\in C^1$  (cf. \cite[Theorem 8.8]{GT}).
 		\end{enumerate}	
 	}
 \end{rem}
\subsection{Criticality theory}
The results in this subsection were recently proved in \cite{PV2}.
\begin{defi}
	\em{
Let $\Gw$ be a domain in $\R^n$, and let $0\lneqq V\in L^{p/2}_\loc(\wb)$, where $p>n$. The {\em generalized principal eigenvalue} of $(P,B)$ in $\Gw$ with respect to $V$  is defined by 
$$
\lambda_0=\lambda_0(P,B,V,\Gw):=\sup\{ \lambda\in \R :(P-\lambda V,B) \geq 0 \mbox{ in } \Gw \}.
$$
}
\end{defi}
\begin{defi}{\em 
		We say that the {\em generalized maximum principle} holds in a bounded domain $\Gw$ if for any $u\in H^1(\Gw)$ satisfying 
		$(P,B)u\geq 0$ in $\Gw$ and $u^{-}\in H^1_{\pwd}(\Gw)$,  we have
		$u \geq 0$ in $\Gw$.
	}
\end{defi}
 Assume that $(P,B)\geq 0$ in a domain $\Gw$. We recall the notion of  the ground state transform which implies a generalized maximum principle. Set 
	\begin{equation*}
	 \mathcal{RSH}_{P,B}(\Omega):= \{u\in \mathcal{SH}_{P,B}(\Omega)\mid u, u^{-1}, uPu \in L^\infty_\loc(\wb) \mbox{ and }  \frac{uBu}{\gb}\in  L^\infty_\loc(\Pw_{\mathrm{Rob}})  \} \neq\emptyset.
	\end{equation*}
 A supersolution $u\in \mathcal{RSH}_{P,B}(\Omega)$ is called a {\em regular positive supersolution of $(P,B)$ in $\Gw$}. 

\begin{defi}[Ground state transform]\label{gs+transform} {\em  Assume that $(P,B)\geq 0$ in a domain $\Gw$, and let $u\in \mathcal{RSH}_{P,B}(\Omega)$.
		Consider the   bilinear  form 
		$$
		\mathcal B_{P^u,B^u}(\phi,\psi):=\mathcal B_{P,B}(u\phi,u\psi)
		$$
		where  $ u\phi , u\psi \in \mathcal{D}(\Omega,\Pw_{\mathrm{Dir}})$.
		The form $\mathcal B_{P^u,B^u}$ corresponds to the elliptic operator 
		$$
		P^u(w):=\frac{1}{u}P(uw),\qquad  \mbox{with the boundary operator } B^u(w):=\frac{1}{u}B(uw).
		$$
		The operator $P^u$ is called the {\em ground state transform} of $P$ with respect to $u$. The operators  $P^u(w)$ and $B^u(w)$ are given explicitly by 
		$$
		P^u(w)=-\frac{1}{u^2}\diver(u^2A(x)\nabla w)+\left [		\bb-\bt\right ]\nabla w+\frac{Pu}{u}w , \mbox{ and }
		B^u(w)=\beta \langle A\nabla w,\vec{n}\rangle+ \frac{Bu}{u}w.
		$$
		
		We say that $w$ is a {\em weak (resp., super)solution of} $(P^u,B^u)$ in $\Omega$,  if  $w\in H^{1}_{\loc}(\wb)$  
		and for any (resp., nonnegative) $\phi$ such that $\phi u \in  \mathcal{D}(\Omega,\Pw_{\mathrm{Dir}})$ we have 
		\begin{multline}\label{eq:gs_eq}
		\mathcal B_{P^u,B^u}(w,\phi)\!=\!\int_{\Omega}\!\!\big(a^{ij}D_iw D_j\phi + (\bb^i - \bt^i)D_iw\phi \big) u^2\!\dx +\! \int_{\Omega}\!\! u(Pu) w\phi\dx  + 
		\!\int_{\Pw_{\mathrm{Rob}}}\!\!\!\!\! \frac{u Bu}{\gb} w \phi  \!\dsigma \\[2mm]
		= 0 \;(\text{resp., } \geq 0). 
		\end{multline}
	}
\end{defi}
\begin{rem}\label{rem:adj_Pu}
	\em{
		Note that  $\mathcal{B}_{P^u,B^u}$ is defined on   $L^2(\Gw, u^2 \!\dx)$.  Furthermore,
		if $u\in \mathcal{H}^{0}_{P,B}(\Gw)$, then the adjoint operator of $(P^u,B^u)$  in $L^2(\Gw,u^2\dx)$ is given by 
		$$ (P^u)^*w=\frac{P^*(uw)}{u}\,, \quad 
		\mbox{ and } \quad  (B^u)^*w=\frac{B^*(uw)}{u}\,.$$ 
	}
\end{rem}

\begin{rem}\label{r2.11}
	\em{
		\begin{enumerate}
			\item 	Let  $u,v\in \mathcal{H}_{P,B}^{0}(\Omega)$ (resp., $u\in \mathcal{H}_{P,B}^{0}(\Omega),v\in \mathcal{SH}_{P,B}(\Omega)$). Assume that $v/u \in H^{1}_{\mathrm{loc}}(\overline{\Omega}\setminus \pwd, u^2 \!\dx)$.
			Then $v/u$ is a weak positive (resp., super)solution of the equation $(P^u,B^u)w=0$ in $\Gw$.
			\item  If $u\in \mathcal{H}^{0}_{P,B}(\Omega)$, then for  any $w \in H^{1}_{\loc}(\overline{\Omega}\setminus \pwd)$ and $\phi \in  \mathcal{D}(\Omega,\Pw_{\mathrm{Dir}})$,  we have 
			$$
			\mathcal{B}_{P^u,B^u}(w,\phi)=\int_{\Omega}\big(a^{ij}D_iw D_j\phi + (\bb^i - \bt^i)D_iw\phi \big)u^2\dx. 
			$$  
		\end{enumerate}		
	}
\end{rem}
The following generalized maximum principle for a nonnegative operators \eqref{P,B} is proved in \cite[Lemma 3.19]{PV2} and is a consequence of the ground state transform.
\begin{lemma}[Generalized maximum principle]\label{gen_max_weak}
	 Assume that the operator $(P,B)$  is nonnegative in  $\Gw$.
	Consider a Lipschitz bounded subdomain  $\Gw'\Subset_R \Gw$.  If  $v\in H^1(\Gw')$ is a  supersolution of the equation $(P,B)u=0$ in $\Gw'$ with $v^{-}\in H^1_{\partial \Gw'_{\mathrm{Dir}} }(\Gw')$, then $v$ is nonnegative in $\Gw'$. 
\end{lemma}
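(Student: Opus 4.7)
The plan is to apply the ground state transform (Definition~\ref{gs+transform}) with respect to a positive solution $u \in \mathcal{H}^{0}_{P,B}(\Gw)$ (which exists by the hypothesis $(P,B)\geq 0$ in $\Gw$), so as to convert the inequality $(P,B)v \geq 0$ into an equivalent inequality for a transformed operator whose bulk and boundary zeroth-order coefficients both vanish, and then run the classical weak maximum principle argument by testing against the negative part.

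First I would note that $u$ is H\"older continuous and strictly positive throughout $\wb$ (Section~\ref{sec_prelim}); since $\overline{\Gw'}$ is compact and $\partial \Gw' \cap \pwd = \emptyset$, both $u$ and $u^{-1}$ lie in $L^{\infty}(\overline{\Gw'})$, and as $Pu = 0$, $Bu = 0$ we have $u \in \mathcal{RSH}_{P,B}(\Gw)$. Set $w := v/u \in H^{1}(\Gw')$. Because $u$ is a solution (not merely a supersolution), the terms $Pu/u$ and $Bu/u$ appearing in Definition~\ref{gs+transform} vanish, and Remark~\ref{r2.11}(2) shows that $w$ is a weak supersolution of $(P^u,B^u)$ on $\Gw'$ in the form
$$
\int_{\Gw'} u^{2}\bigl[\,a^{ij} D_{j} w\, D_{i}\phi + (\bb - \bt)\cdot \nabla w\,\phi\,\bigr]\dx \geq 0
$$
for every nonnegative $\phi$ with $u\phi \in H^{1}_{\partial \Gw'_{\mathrm{Dir}}}(\Gw')$; the boundary integral disappears since $Bu/\gb = 0$ on $\pwr$.

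The key step is to insert $\phi := w^{-}$. This is admissible because $u w^{-} = v^{-} \in H^{1}_{\partial \Gw'_{\mathrm{Dir}}}(\Gw')$ by hypothesis. Using $\nabla w = -\nabla w^{-}$ on $\{w<0\}$ and $w^{-} = 0$ on $\{w \geq 0\}$, I obtain
$$
\int_{\Gw'} u^{2}|\nabla w^{-}|_{A}^{2} \dx \leq \int_{\Gw'} u^{2}(\bt - \bb)\cdot \nabla w^{-}\,w^{-} \dx,
$$
and Cauchy--Schwarz in the norm $|\cdot|_{A}$ followed by Young's inequality absorb half of the left-hand side, yielding $\int_{\Gw'} u^{2}|\nabla w^{-}|_{A}^{2} \dx \leq C \int_{\Gw'} u^{2}|\bt-\bb|_{A^{-1}}^{2}(w^{-})^{2} \dx$.

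Finally I would exploit $\bt,\bb \in L^{p}_{\loc}(\wb)$ with $p > n$. By H\"older's inequality and the Sobolev embedding $H^{1}(\Gw') \hookrightarrow L^{2p/(p-2)}(\Gw')$ (which is continuous precisely when $p > n$), the right-hand side is bounded by $\|\bt-\bb\|_{L^{p}(\Gw')}^{2}$ times a Sobolev seminorm of $w^{-}$. A localisation over a finite cover of $\overline{\Gw'}$ by sufficiently small pieces, combined with the Poincar\'e inequality on each piece, makes the resulting constant strictly less than $1$ and forces $\nabla w^{-} \equiv 0$ in $\Gw'$; here one uses that $w^{-}$ has zero trace on the portion of $\partial \Gw'_{\mathrm{Dir}}$ interior to $\Gw$, which carries positive $(n-1)$-Hausdorff measure. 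Being locally constant and vanishing in trace on $\partial \Gw'_{\mathrm{Dir}}$ then gives $w^{-} \equiv 0$, i.e.\ $v = uw \geq 0$ in $\Gw'$. The main obstacle I anticipate is precisely this last step: with only $L^{p}_{\loc}$ ($p>n$) regularity of the drift, a single global energy bound does not close, so a Trudinger-style covering/chaining argument is required, and the threshold $p > n$ is essential.
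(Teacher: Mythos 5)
Your strategy—pass to the ground-state transform $(P^u,B^u)$ with respect to some $u\in\mathcal{H}^{0}_{P,B}(\Gw)$ so that the zeroth-order bulk and Robin terms vanish, then test the transformed inequality against $w^-=(v/u)^-$ (admissible because $uw^-=v^-\in H^1_{\partial\Gw'_{\mathrm{Dir}}}(\Gw')$)—matches the mechanism the paper attributes to this lemma. In the symmetric case $\bb=\bt$ this closes cleanly: the drift disappears, you get $\int_{\Gw'}u^2|\nabla w^-|_A^2\dx\leq 0$, so $w^-$ is constant on the connected set $\Gw'$, hence zero by the vanishing trace on $\partial\Gw'_{\mathrm{Dir}}$.

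The treatment of the drift term in the nonsymmetric case, however, has a genuine gap. The energy inequality $\int u^2|\nabla w^-|_A^2\dx\leq\int u^2|\bt-\bb|_{A^{-1}}^2(w^-)^2\dx$ does not by itself force $w^-=0$, and the proposed ``localisation plus Poincar\'e on small pieces'' cannot close it: on an interior piece of a fine cover $w^-$ has neither zero boundary data nor a small mean, so Poincar\'e yields no smallness, and the one set where $w^-$ does have zero trace, namely $\partial\Gw'_{\mathrm{Dir}}$, is invisible to interior balls. As a sanity check, already for $P=-d^2/dx^2+b\,d/dx$ with a large constant $b$ on a fixed interval the energy inequality you derive is satisfied without contradiction even though the maximum principle certainly holds—so the estimate is simply too lossy. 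What actually rescues the nonsymmetric case is a structural observation you did not use: after the ground-state transform the weak form $\int u^2[\,a^{ij}D_jwD_i\phi+(\bb-\bt)^iD_iw\,\phi\,]\dx$ has Gilbarg--Trudinger Theorem~8.1 structure with ``$b^i=0$'' and ``$d=0$'' (all the drift is of ``$c^i$'' type), so the structural hypothesis $\int(dv-b^iD_iv)\dx\leq0$ holds trivially, and one then runs GT's measure-theoretic contradiction—bounding the measure of $\supp\nabla w^-\cap\supp w^-$ from below via the Sobolev inequality—rather than a local Poincar\'e inequality. An equivalent route is to use Theorem~\ref{eeqiuv_add}: $(P,B)\geq0$ in $\Gw$ gives $\lambda_0(P,B,1,\Gw')>0$, hence coercivity of the form on $H^1_{\partial\Gw'_{\mathrm{Dir}}}(\Gw')$, which gives the maximum principle directly. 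Either replacement is needed for your final step.
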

We proceed with Harnack convergence principle \cite[Lemma 3.27]{PV}.
\begin{lem}[Harnack convergence principle]\label{HCP}
	Suppose that Assumptions~\ref{assump2} hold in $\Gw$, and  let $\{\Gw_k\}_{k\in \N}$ be an exhaustion of $\wb$. Let $x_0\in \Gw_1$ be a fixed reference point. 
	For each $k\geq 1$, let $ u_k\in H^{1}_{\loc}(\Gw_k \cup\partial \Gw_{k, \pwr})$ be a positive solution of the problem
	\begin{equation}\label{PB_Gwk}
	\begin{cases}
	Pu=0 &  \text{in } \Gw_k, \\
	Bu = 0 &  \text{on } \partial \Gw_{k,\mathrm{Rob}},
	\end{cases}
	\end{equation}
	satisfying  $u_k(x_0)=1$. Then the sequence $\{u_k\}_{k\in \N}$ admits a subsequence converging locally uniformly in $\wb$ to a positive solution 
	$u\in   ^{0}_{P,B}(\Gw)$. 
\end{lem}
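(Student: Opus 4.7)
The plan is to follow the standard Harnack convergence strategy, adapted to accommodate the Robin portion $\pwr$ instead of forcing test functions to vanish there. I organize the argument into three main stages.

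\emph{Uniform local bounds.} Fix any $K\Subset_R \Gw$ and enlarge to $K \Subset_R K' \Subset_R \Gw$. By the exhaustion property (Definition~\ref{def_exhaus}) together with $\partial\Gw_k \cap \pwd = \emptyset$, for all $k$ sufficiently large one has $\overline{K'}\subset \overline{\Gw_k}$ and $K' \cap \pwr \subset \partial\Gw_{k,\mathrm{Rob}}$, so $u_k$ solves $(P,B)u_k = 0$ on the relatively open set $K' \cup (K' \cap \pwr)$. Under Assumption~\ref{assump2} (locally uniform ellipticity of $A$, $C^1$-regularity of $\pwr$, and $\gamma/\beta \in L^\infty_\loc(\pwr)$), the interior Harnack inequality and the boundary Harnack inequality for oblique problems apply with constants depending only on $K,K'$; chaining them from the normalization $u_k(x_0)=1$ along a finite cover of $\overline{K'}$ yields
$$0<C(K)^{-1}\leq u_k(x) \leq C(K)<\infty \qquad \text{for every } x\in K,$$
with $C(K)$ independent of $k$.

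\emph{Compactness.} Caccioppoli-type energy estimates with cut-offs supported in $K'$ (automatically avoiding $\partial\Gw_{k,\Dir}$ for large $k$) upgrade the $L^\infty(K')$-bound to a uniform $H^1(K)$-bound. Combined with De~Giorgi--Nash--Moser regularity interior and up to the $C^1$ Robin portion (cf.\ \cite[Theorem 5.54]{L}), one obtains a uniform $C^{0,\alpha}(K)$-bound. A diagonal Arzel\`a--Ascoli extraction over an increasing sequence of compacta exhausting $\wb$ delivers a subsequence $\{u_{k_j}\}$ converging locally uniformly in $\wb$ to some $u \in C(\wb)$; a further extraction yields weak convergence in $H^1_\loc(\wb)$. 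Positivity is immediate, since $u(x_0)=1$ and the uniform lower Harnack bound transfers to $u$.

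\emph{Passing to the limit.} Let $\phi \in \mathcal{D}(\Gw,\pwd)$. Since $\supp\phi$ is a compact subset of $\wb$, for all sufficiently large $j$ we have $\supp\phi \subset \Gw_{k_j}\cup \partial\Gw_{k_j,\mathrm{Rob}}$ and $\supp\phi \cap \partial\Gw_{k_j,\Dir}=\emptyset$, so $\phi \in \mathcal{D}(\Gw_{k_j}, \partial\Gw_{k_j,\Dir})$ is a legitimate test function for $u_{k_j}$; hence $\mathcal{B}_{P,B}(u_{k_j},\phi)=0$. The weak $H^1$-convergence on a neighborhood of $\supp\phi$, together with locally uniform convergence (to handle the lower-order terms), lets one pass to the limit in the interior integrals; the compactness of the trace $H^1(\Gw') \to L^2(\partial \Gw')$ on a small Lipschitz neighborhood $\Gw'$ of $\supp\phi$ yields strong $L^2$-convergence on $\pwr \cap \supp\phi$, handling the Robin boundary integral. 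Therefore $\mathcal{B}_{P,B}(u,\phi)=0$, so $u \in \mathcal{H}^0_{P,B}(\Gw)$.

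The main obstacle is ensuring that the constants in the Harnack and Hölder estimates of the first stage are genuinely uniform in $k$, since the Robin boundaries $\partial\Gw_{k,\mathrm{Rob}}$ vary with $k$. However, the exhaustion is constructed so that $\partial\Gw_{k,\mathrm{Rob}}\subset \pwr$ eventually contains any fixed compact piece of $\pwr$ (Remark~\ref{rem:boundness}), which reduces the boundary estimates on $K'$ to those for a fixed oblique problem on a fixed $C^1$-portion of $\pwr$; the standard theory under Assumption~\ref{assump2} then supplies the required uniform constants.
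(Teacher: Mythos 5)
The paper itself does not prove this lemma; it states it and cites it directly from a companion paper (the reference labelled \cite{PV}, though the content suggests the intended source is \cite{PV2}, where the mixed boundary value problem is treated). So there is no in-paper proof to compare against, and the task reduces to assessing whether your argument is sound on its own.

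Your three-stage proof is the standard Harnack convergence argument and it is essentially correct. Stage one (uniform two-sided bounds on compacta $K\Subset_R\Gw$ via chained interior/boundary Harnack from the normalization $u_k(x_0)=1$) is the crux, and you correctly identify that the estimates must be uniform in $k$; your closing paragraph resolves this properly, using that the exhaustion satisfies $\overline{\Gw_k}\subset\Gw_{k+1}\cup\partial\Gw_{k+1,\mathrm{Rob}}$, so any $K\Subset_R\Gw$ is eventually contained in a fixed $\Gw_k\cup\partial\Gw_{k,\mathrm{Rob}}$ and the constants only see a fixed $C^1$-portion of $\pwr$. Stage two (Caccioppoli plus local H\"older regularity up to $\pwr$, then a diagonal Arzel\`a--Ascoli) and stage three (passing to the limit in the weak formulation, using weak $H^1_{\loc}$ convergence for the principal part, locally uniform convergence for the zero-order and drift terms in the $u_{k_j}$-slot, and trace convergence for the Robin integral) are both correct.

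Two small remarks. First, the local H\"older continuity of weak solutions up to $\pwr$ under Assumptions~\ref{assump2} is the result the paper attributes to \cite[Section~3.2]{PV2}; Lieberman's Theorem~5.54, which you cite, gives the stronger $C^{1,\alpha}$-regularity but under stronger coefficient hypotheses, so the more appropriate reference here is \cite{PV2}. Second, for the Robin boundary integral you actually need less than compact trace embedding: since the $u_{k_j}$ converge uniformly on compact subsets of $\wb$ (by the equicontinuity you already established), and $\supp\phi\cap\pwr$ is such a compact set, the boundary integrals converge directly. Neither point affects the validity of the argument.
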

\begin{rem}
	\em{
Lemma \ref{HCP} holds once  $P$ is replaced by $P^k(u)=-\diver(A^k \nabla u+\bt_k u)+\bb_k \nabla u+c_ku$,  where $A_k\in L^{\infty}_{\loc}(\wb)$
is a sequence of symmetric and positive definite matrices converging to $A$ in $L^{\infty}_{\loc}(\wb)$; $\bt_k,\bb_k\in L^p_{\loc}(\wb)$, $\bt_k\to \bt$  and $\bb_k\to \bb$ in $L^p_{\loc}(\wb,\R^n)$;  and $c_k\in  L^{p/2}_{\loc}(\wb)$, $c_k\to c$ in $L^{p/2}_{\loc}(\wb)$.
}
\end{rem}
The following characterization of $\lambda_0$ was proved in \cite{PV2}. 
\begin{theorem}[{\cite[Theorem~4.1]{PV2}}]\label{eeqiuv_add}
	 The following assertions are equivalent:
	\begin{enumerate}
		\item  $ {\mathcal H}^0_{P,B}(\Omega)\neq \emptyset$, and in particular, $\lambda_0(P,B,1,\Gw)\geq 0$.
		\item   $\mathcal{RSH}_{P,B}(\Omega)\neq \emptyset$.
		\item $\lambda_0(P,B,1,\Gw')>0$ for any Lipschitz subdomain $\Gw'\Subset_R\Gw$. 
		\item $(P,B)$ satisfies the generalized maximum principle in
		any Lipschitz subdomain  $\Gw'\Subset_R \Gw$. 
	\end{enumerate}
\end{theorem}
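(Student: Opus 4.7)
The plan is to close the cycle $(1)\Rightarrow(4)\Rightarrow(3)\Rightarrow(1)$ and prove $(1)\Leftrightarrow(2)$ separately, with $(3)\Rightarrow(1)$ as the substantive step.

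\emph{The easy parts.} For $(2)\Rightarrow(1)$, every regular positive supersolution dominates a positive solution by a standard Perron construction on an exhaustion using Lemma~\ref{Lem_Fred} and Lemma~\ref{HCP}. For $(1)\Rightarrow(2)$, any $u\in\mathcal{H}^{0}_{P,B}(\Omega)$ automatically satisfies $uPu\equiv 0$ and $uBu/\beta\equiv 0$, while the local Harnack inequality for positive solutions up to the $C^1$-Robin portion of $\partial\Omega$ (via the even-reflection trick of \cite[Lemma~5.9]{PV2}; cf.\ Remark~\ref{rem:certain_imp}) delivers $u,u^{-1}\in L^{\infty}_{\loc}(\wb)$, placing $u$ in $\mathcal{RSH}_{P,B}(\Omega)$. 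The implication $(1)\Rightarrow(4)$ is exactly Lemma~\ref{gen_max_weak}.

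\emph{The step $(4)\Rightarrow(3)$.} Fix a Lipschitz $\Omega'\Subset_R\Omega$. By Lemma~\ref{Lem_Fred}, for each $\varepsilon>0$ the resolvent $T_\varepsilon:=(P+\varepsilon,B)^{-1}$ is a bounded linear operator $L^{p/2}(\Omega')\to H^{1}_{\partial\Omega'_{\Dir}}(\Omega')$ that is positivity-preserving under the GMP and compact as a self-map of $L^{p/2}(\Omega')$ by Rellich's embedding. Krein--Rutman yields a simple positive eigenvalue $r(T_\varepsilon)>0$ with a positive principal eigenfunction $\varphi_\varepsilon\in H^{1}_{\partial\Omega'_{\Dir}}(\Omega')$, satisfying $(P-\lambda_\varepsilon,B)\varphi_\varepsilon=0$ with $\lambda_\varepsilon=r(T_\varepsilon)^{-1}-\varepsilon$. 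If $\lambda_0(P,B,1,\Omega')\leq 0$ then, passing $\varepsilon\downarrow 0$, one obtains a positive $\varphi_0\in H^{1}_{\partial\Omega'_{\Dir}}(\Omega')$ with $(P,B)\varphi_0=\lambda_0\varphi_0\leq 0$; then $-\varphi_0$ is a supersolution with $(-\varphi_0)^{-}=\varphi_0\in H^{1}_{\partial\Omega'_{\Dir}}(\Omega')$, and GMP forces $-\varphi_0\geq 0$, contradicting $\varphi_0>0$. Hence $\lambda_0(P,B,1,\Omega')>0$.

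\emph{The main step $(3)\Rightarrow(1)$.} Fix an exhaustion $\{\Omega_k\}$ of $\wb$ and a reference point $x_0\in\Omega_1$. By (3) and the Krein--Rutman construction above, for each $k$ there is a positive principal eigenfunction
\begin{equation*}
\begin{cases} P\varphi_k = \mu_k\varphi_k & \text{in }\Omega_k, \\ B\varphi_k = 0 & \text{on }\partial\Omega_{k,\mathrm{Rob}}, \end{cases}
\end{equation*}
with $\mu_k=\lambda_0(P,B,1,\Omega_k)>0$ and $\varphi_k(x_0)=1$. Domain monotonicity gives $\mu_k\searrow \mu_\infty\geq 0$. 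Applying Lemma~\ref{HCP} (in the perturbed form noted in the Remark following it) to the operators $P-\mu_k$ produces a subsequence of $\{\varphi_k\}$ converging locally uniformly in $\wb$ to a positive $u$ satisfying $(P-\mu_\infty,B)u=0$. If $\mu_\infty=0$ then $u\in\mathcal{H}^{0}_{P,B}(\Omega)$ and we are done. If $\mu_\infty>0$, $u$ is only a positive supersolution of $(P,B)$; to upgrade this to a positive solution, solve on each $\Omega_k$ the problem $(P,B)v_k=0$ with Dirichlet data $v_k=u|_{\partial\Omega_{k,\Dir}}$ (Lemma~\ref{Lem_Fred}), invoke GMP to obtain $0<v_k\leq u$, renormalize $v_k(x_0)=1$, and apply Lemma~\ref{HCP} once more.

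The principal obstacle lies in $(3)\Rightarrow(1)$: executing the Krein--Rutman construction of $\varphi_k$ uniformly in $k$ and invoking Harnack convergence along a sequence of domains whose Dirichlet and Robin boundary portions $\partial\Omega_{k,\mathrm{Dir}}$, $\partial\Omega_{k,\mathrm{Rob}}$ both vary with $k$. This requires $k$-uniform compactness of the resolvents and boundary-Harnack estimates up to the $C^1$-Robin portion, which are precisely the inputs supplied by the reflexivity of $H^{1}_{\partial\Omega'_{\Dir}}(\Omega')$ (Remark following Lemma~\ref{lem:alhors_2}) and by Lemma~\ref{HCP}.
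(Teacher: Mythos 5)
This theorem is quoted in the paper from \cite[Theorem~4.1]{PV2} and is not proved here, so there is no in-paper argument to compare against; I am assessing the proposal on its own merits.

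The overall architecture — closing the loop $(1)\Rightarrow(4)\Rightarrow(3)\Rightarrow(1)$ with $(1)\Leftrightarrow(2)$ on the side, the identification $(1)\Rightarrow(4)$ with Lemma~\ref{gen_max_weak}, the exhaustion argument with Lemma~\ref{HCP} for $(3)\Rightarrow(1)$ — is a reasonable strategy. The two implications $(1)\Rightarrow(2)$ and $(1)\Rightarrow(4)$ are fine as written. But the proof of $(4)\Rightarrow(3)$, which you flag as a substantive step, has a genuine logical gap and several unsupported sub-claims.

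First, you invoke Lemma~\ref{Lem_Fred} to assert that $T_\varepsilon=(P+\varepsilon,B)^{-1}$ exists for every $\varepsilon>0$. That lemma is stated under the standing hypothesis $(P,B)\geq 0$ in $\Omega$ — which is precisely assertion $(1)$, not $(4)$ — so it cannot be used here without circularity, and in any case the resolvent need not exist for \emph{every} $\varepsilon>0$ (only for $\varepsilon$ outside the spectrum; one typically takes $\varepsilon$ large enough so that $\mathcal B_{P+\varepsilon,B}$ is coercive on $H^1_{\partial\Omega'_{\Dir}}(\Omega')$ and invokes Lax--Milgram). Second, positivity of $T_\varepsilon$ requires the generalized maximum principle for $(P+\varepsilon,B)$, not for $(P,B)$; hypothesis $(4)$ gives you GMP only for $(P,B)$. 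For $\varepsilon$ large this does hold — coercivity of the form implies GMP by testing a supersolution $u$ against $u^-$ and observing that the cross terms $\mathcal B_{P+\varepsilon,B}(u^+,u^-)$ vanish — but this is an argument you need to supply, not a consequence of $(4)$. Third, the compactness claim ``$T_\varepsilon$ is compact as a self-map of $L^{p/2}(\Omega')$ by Rellich'' does not line up: $T_\varepsilon$ maps $L^{p/2}$ into $H^1_{\partial\Omega'_{\Dir}}$, and whether $H^1\hookrightarrow L^{p/2}$ compactly depends on $n$ and on $p$; since the standing assumption is only $p>n$, this inclusion can fail, so you should either work on $L^2$ (or $C(\overline{\Omega'})$ via De Giorgi--Nash regularity) or restrict $p$. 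Similar remarks apply to the passage $\varepsilon\downarrow 0$, which again silently assumes $T_\varepsilon$ exists down to $\varepsilon=0$.

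There are also smaller gaps further along. In $(3)\Rightarrow(1)$, when $\mu_\infty>0$ you solve a Dirichlet--Robin problem on $\Omega_k$ and then invoke GMP, but at that point in the cycle $(4)$ is not available from $(3)$; you must first observe that the principal eigenfunction $\varphi_k$ is a regular positive supersolution of $(P,B)$ in $\Omega_k$, hence $\mathcal{RSH}_{P,B}(\Omega_k)\neq\emptyset$ and GMP in $\Omega_k$ follows from the ground state transform — i.e., you are implicitly using $(2)\Rightarrow(4)$ inside $(3)\Rightarrow(1)$, which should be said. Finally, $(2)\Rightarrow(1)$ is asserted as a ``standard Perron construction'' with no detail; this is the same exhaustion-plus-GMP-plus-Harnack argument you use at the end of $(3)\Rightarrow(1)$, and it should be written out once, since it is not a one-line reduction and since GMP there is obtained from $(2)$ (via the ground state transform) rather than from $(1)$. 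None of these gaps appears fatal, but as written the proof of $(4)\Rightarrow(3)$ does not go through without the added coercivity/positivity arguments, and the repeated appeals to Lemma~\ref{Lem_Fred} outside its stated hypotheses need to be replaced.
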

\begin{lem}[{\cite[ Lemma~ 4.3]{PV2}}]\label{lem1.8}
		Suppose that $H^{0}_{P,B}(\Gw)\neq \emptyset$, and let $\Gw'\Subset_R \Gw$ be a bounded Lipschitz subdomain of $\Gw$. 
	Let $K\Subset \Gw'$ be a Lipschitz  subdomain. 
	Then for any nonzero nonnegative function $f\in C_0^\infty(\Gw'\setminus K)$ there exists a unique positive weak solution $u\in H^1_{\partial \Gw'_{\Dir}\cup \partial K_{\Dir}}(\Gw'\setminus K)$  to  the problem 
	\begin{equation}\label{eq: with K}
	\begin{cases}
	Pw=f & \Omega'\setminus K , \\
	Bw= 0  & \partial {\Gw'}_{\mathrm{Rob}},\\
	\mathrm{Trace}(w)=0& (\partial {\Gw'}\cup \partial K)\setminus  \partial {\Gw'}_{\mathrm{Rob}}.
	\end{cases}
	\end{equation}
\end{lem}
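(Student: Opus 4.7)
The plan is to realize $\tilde\Omega := \Omega'\setminus\overline{K}$ as a bounded Lipschitz subdomain of $\Omega$ with $\tilde\Omega\Subset_R\Omega$, and then to invoke Lemma~\ref{Lem_Fred} for existence and uniqueness, Lemma~\ref{gen_max_weak} for the sign, and an interior Harnack inequality for strict positivity.

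Since $K$ and $\Omega'$ are both bounded Lipschitz and $K\Subset\Omega'$, the set $\tilde\Omega$ is bounded Lipschitz. As $\partial K\subset\Omega'$ sits in the interior of $\Omega$ and is therefore disjoint from $\partial\Omega$, one has
\[ \partial\tilde\Omega_{\mathrm{Rob}}=\partial\Omega'_{\mathrm{Rob}},\qquad \partial\tilde\Omega_{\mathrm{Dir}}=\partial\Omega'_{\mathrm{Dir}}\cup\partial K, \]
so $H^1_{\partial\tilde\Omega_{\mathrm{Dir}}}(\tilde\Omega)$ is precisely the ambient space in the statement, and in particular $\tilde\Omega\Subset_R\Omega$. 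The hypothesis $\mathcal{H}^{0}_{P,B}(\Omega)\neq\emptyset$ together with Theorem~\ref{eeqiuv_add} yields $\lambda_0(P,B,1,\tilde\Omega)>0$, which supplies the coercivity underlying the Fredholm framework. Applying Lemma~\ref{Lem_Fred} on $\tilde\Omega$ with $g_0=f$ and $\mathbf{g}=0$ then produces a unique weak solution $u\in H^1_{\partial\tilde\Omega_{\mathrm{Dir}}}(\tilde\Omega)$ of~\eqref{eq: with K}.

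For the sign, observe that $(P,B)u=f\geq 0$ in $\tilde\Omega$, while $u^-\in H^1_{\partial\tilde\Omega_{\mathrm{Dir}}}(\tilde\Omega)$ trivially, so Lemma~\ref{gen_max_weak} gives $u\geq 0$ in $\tilde\Omega$. Since $f\gneqq 0$, testing the weak formulation against a nonnegative bump $\phi\in C_c^\infty(\tilde\Omega)$ supported in $\{f>0\}$ rules out $u\equiv 0$. Restricting test functions to $C_c^\infty(\tilde\Omega)$ makes the Robin boundary integral drop out, exhibiting $u$ as a nonnegative, nontrivial weak supersolution of $Pu=0$ in the open set $\tilde\Omega$; the interior Harnack inequality, available under Assumptions~\ref{assump2}, then forces $u>0$ throughout $\tilde\Omega$.

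The main obstacle I foresee is essentially bookkeeping: verifying that the boundary decomposition of $\tilde\Omega$ respects Assumptions~\ref{assump2} and the hypotheses of Lemma~\ref{Lem_Fred}, specifically that $\partial\Omega'_{\mathrm{Rob}}$ remains a relatively open $C^1$-portion of $\partial\tilde\Omega$ disjoint from the added Dirichlet piece $\partial K$, and that the Dirichlet part $\partial\Omega'_{\mathrm{Dir}}\cup\partial K$ retains nonempty relative interior. Both points follow directly from $K\Subset\Omega'$ and $\Omega'\Subset_R\Omega$, so no genuine analytic novelty beyond the tools already recalled is required.
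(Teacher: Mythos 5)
Your argument is correct and is the natural derivation of this result from the tools already recalled: the paper itself only cites \cite[Lemma 4.3]{PV2} for this lemma without reproducing a proof, and your route — realize $\tilde\Omega=\Omega'\setminus\overline{K}$ as a bounded Lipschitz subdomain with $\tilde\Omega\Subset_R\Omega$, identify $\partial\tilde\Omega_{\mathrm{Rob}}=\partial\Omega'_{\mathrm{Rob}}$ and $\partial\tilde\Omega_{\mathrm{Dir}}=\partial\Omega'_{\mathrm{Dir}}\cup\partial K$ (noting $\partial K\cap\partial\Omega=\emptyset$ forces $\partial K=\partial K_{\mathrm{Dir}}$), then invoke Lemma~\ref{Lem_Fred} with $g_0=f$, $\mathbf g=0$ for existence and uniqueness, Lemma~\ref{gen_max_weak} for nonnegativity, and weak Harnack for strict positivity — is exactly the machinery set up in Section~\ref{sec_prelim} for this purpose. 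One small point worth making explicit: the Harnack step requires $\tilde\Omega$ to be connected (otherwise $u$ could vanish on a component missing $\supp f$); this is implicit in calling $\Omega'\setminus K$ the domain of a positive solution, but you may want to state it, since $\Omega'\setminus\overline{K}$ need not be connected for arbitrary Lipschitz $K\Subset\Omega'$.
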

Next, we introduce the notion of positive solution of minimal growth for \eqref{P,B} (cf. \cite{Agmon,PS,PV}). In the sequel $\{ \Gw_k\}_{k\in \N}$ is an exhaustion of $\wb$.
\begin{defi}\label{def:minimalgrowth}
	\em{
		A function $u$ is said to be a {\em positive solution of $(P,B)$ of minimal growth in a neighborhood of infinity in $\Gw$}  if 
		$u\in \mathcal{H}^{0}_{P,B}(\Gw^*_j)$ for some $j \geq 1$ and for any $l>j$ and $v\in C(\Gw^*_l \cup \Pw_{l,\Dir})\cap \mathcal{SH}_{P,B}(\Gw^*_l)$,  $u \leq v$ on $\Pw_{l,\Dir}$  $\Rightarrow$ $ u \leq v$ on $\Gw^*_l$
}
\end{defi}
\begin{lem}[{\cite[ Lemma~4.5]{PV2}}]\label{lem:min_growth}
	Suppose that $H^{0}_{P,B}(\Gw)\neq \emptyset$. Then for any $x_0\in \Gw$ the equation $(P,B)u=0$ has (up to a multiplicative constant) a unique positive solution $v$ in $\Gw \setminus \{ x_0\}$ of minimal growth in a neighborhood of infinity in $\Gw$.
\end{lem}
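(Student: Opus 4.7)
The plan is to split the argument into an existence step and a uniqueness step, both adapting the classical Agmon-type construction to the mixed-boundary framework.

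For existence, I would first invoke Theorem~\ref{eeqiuv_add} to conclude that $\lambda_0(P,B,1,\Gw_k) > 0$ on each member of an exhaustion $\{\Gw_k\}$ of $\overline{\Gw}\setminus \pwd$, so that $(P,B)$ is subcritical on $\Gw_k$ and admits the minimal positive Green function $G_k(\cdot, x_0) := G_{P,B}^{\Gw_k}(\cdot, x_0)$; this Green function is built from Lemma~\ref{lem1.8} applied with $K = \overline{B_r(x_0)}$ and letting $r \to 0$. Fix a reference point $y_0 \in \Gw_1 \setminus \{x_0\}$ and normalize $\widetilde G_k := G_k(\cdot, x_0)/G_k(y_0, x_0)$. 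A Harnack-chain argument from $y_0$ shows the sequence $\widetilde G_k$ is uniformly locally bounded on compact subsets of $\overline{\Gw}\setminus (\{x_0\}\cup \pwd)$, and the Harnack convergence principle (Lemma~\ref{HCP}) extracts a locally uniform subsequential limit $v\in\mathcal{H}^0_{P,B}(\Gw\setminus\{x_0\})$. The standard fundamental-solution lower bound on $G_k$ in a fixed small ball around $x_0$ survives the limit and guarantees that $v$ is singular at $x_0$. Minimal growth of $v$ would be inherited from the $\widetilde G_k$: on each annular region $\Gw_k \setminus \overline{\Gw_l}$ with $l < k$, the generalized maximum principle (Lemma~\ref{gen_max_weak}) provides the required comparison against any continuous supersolution, and passing $k \to \infty$ via locally uniform convergence transfers the property to $v$.

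For uniqueness, let $v_1, v_2$ be two positive solutions of $(P,B)u = 0$ in $\Gw\setminus\{x_0\}$ of minimal growth. Choose $\rho > 0$ small enough that $\overline{B_\rho(x_0)} \Subset \Gw_j$ for some $j$, and use the local Harnack inequality in the punctured neighborhood of $x_0$ to conclude that $\alpha_\rho := \sup_{\partial B_\rho(x_0)} v_1/v_2$ is finite and positive. The idea is to treat $\partial B_\rho(x_0)$ as an artificial inner Dirichlet boundary and apply the minimal-growth property of $v_1$ against the positive supersolution $\alpha_\rho v_2$, which dominates $v_1$ on $\partial B_\rho(x_0)$, to obtain $v_1 \le \alpha_\rho v_2$ on $\Gw\setminus \overline{B_\rho(x_0)}$. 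The difference $\alpha_\rho v_2 - v_1$ is then a nonnegative solution of the mixed problem on $\Gw\setminus\overline{B_\rho(x_0)}$ which vanishes at the interior point $x_\rho \in \partial B_\rho(x_0)$ where the supremum is attained. The strong maximum principle, most cleanly accessed through the ground state transform of Definition~\ref{gs+transform} with respect to $v_2$ (which reduces the quotient $(\alpha_\rho v_2-v_1)/v_2$ to a nonnegative supersolution of a divergence-form equation without zeroth-order term, vanishing at an interior point), forces $v_1 \equiv \alpha_\rho v_2$ on the connected component. A restriction argument comparing two radii shows $\alpha_\rho$ is independent of $\rho$, and $v_1 = \alpha v_2$ on $\Gw\setminus\{x_0\}$ follows.

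The main obstacle will be reconciling the minimal-growth condition, phrased in Definition~\ref{def:minimalgrowth} relative to the fixed exhaustion $\{\Gw_k\}$, with the ad hoc removal of a small ball $B_\rho(x_0)$ required in the uniqueness step; one must either establish exhaustion-independence of minimal growth or construct an auxiliary exhaustion whose first element contains $\overline{B_\rho(x_0)}$, while preserving the Lipschitz and mixed-boundary regularity demanded by Lemmas~\ref{HCP} and \ref{gen_max_weak}. A parallel subtlety arises in the existence step, where the minimal-growth property of the approximating Green functions $\widetilde G_k$ — controlled only on sets exhausted by $\Gw_k$ — must be shown to be uniform enough in $k$ to survive the Harnack-convergence limit under the Robin condition on $\partial\Gw_{k,\mathrm{Rob}}$.
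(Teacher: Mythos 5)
This lemma is cited from \cite{PV2} and is stated without proof in the present paper, so the comparison is against what the statement logically requires rather than an in-paper argument. Your existence step follows the classical Agmon construction (exhaustion Green functions, normalization, Harnack convergence) and is essentially the right approach, though two points need care. First, your claim that a fundamental-solution lower bound forces $v$ to be singular at $x_0$ is false in the critical case: there the normalization factor $G_k(y_0,x_0)$ blows up and the limit $v$ is the (regular) ground state of $(P,B)$, not a Green function; fortunately singularity at $x_0$ is not part of the statement, so this is an overclaim rather than a fatal error. Second, the transfer of minimal growth to the limit should be phrased by comparing $\widetilde G_k$ against $(1+\varepsilon_k)w$ (multiplicative perturbation, which preserves the supersolution property), not $w+\varepsilon_k$; the latter need not be a supersolution unless $(P,B)\mathbf{1}\geq 0$.

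The uniqueness step, however, has a genuine gap that you identify but do not resolve. The definition of minimal growth (Definition~\ref{def:minimalgrowth}) licenses a comparison only against supersolutions $w$ in $\Gw^*_l$ satisfying $u\le w$ on $\partial\Gw_{l,\Dir}$, the inner Dirichlet boundary of a tail $\Gw^*_l$ of the exhaustion. Your argument replaces that surface by $\partial B_\rho(x_0)$, and the inference ``$v_1\le\alpha_\rho v_2$ on $\partial B_\rho(x_0)$ implies $v_1\le\alpha_\rho v_2$ on $\Gw\setminus\overline{B_\rho(x_0)}$'' is not what minimal growth gives you: $\Gw\setminus\overline{B_\rho(x_0)}$ is not a tail $\Gw^*_l$ of the exhaustion, and the generalized maximum principle (Lemma~\ref{gen_max_weak}) only covers bounded Lipschitz subdomains $\Gw'\Subset_R\Gw$, not the full unbounded complement. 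Building an auxiliary exhaustion starting at $B_\rho(x_0)$ does not automatically transfer the minimal-growth property either, since $v_1$ was assumed minimal only for the original one; you would first need to prove that minimal growth is independent of the exhaustion, which is itself a lemma. Moreover the touching argument requires the nonnegative solution $\alpha_\rho v_2-v_1$ to vanish at an interior point of the domain where it is known nonnegative; as written, the infimum of $(\alpha_\rho v_2-v_1)/v_2$ could in principle be attained ``at infinity'' or along $\pwd$, and the strong maximum principle gives nothing there. The standard route (cf.~\cite{Agmon,PS}) instead compares on $\partial\Gw_{l,\Dir}$ directly, extracts a decreasing sequence of ratios $\alpha_l:=\sup_{\partial\Gw_{l,\Dir}} v_1/v_2$, and then combines the ground state transform with either a Liouville-type removable-singularity argument or a dichotomy between the critical case (where $v$ is identified with the ground state via Lemma~\ref{gs_implies_crit}) and the subcritical case (where $v$ is identified with the minimal Green function via Proposition~\ref{prop:GphiR}). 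That case split is the missing ingredient that would close your argument.
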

\begin{lem}[{\cite[ Lemma~ 4.7]{PV2}}]\label{inview}
	Suppose that $H^{0}_{P,B}(\Gw)\neq \emptyset$. Then $(P,B)$ is critical in $\Gw$ if and only if there exists a unique $u\in \mathcal{RSH}_{P,B}(\Omega)$ (up to a multiplicative constant).
\end{lem}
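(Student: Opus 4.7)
My approach splits along the two implications, and the techniques for the two directions are quite different.

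For the forward direction, I plan to invoke the characterization of criticality cited in the introduction (and proved in \cite[Theorem~5.19, Corollary~5.20]{PV2}): $(P,B)$ is critical in $\Gw$ if and only if the equation $(P,B)u=0$ admits a unique positive supersolution up to multiplicative constant, and this supersolution is automatically a positive solution, namely the ground state $\phi$. Since $\mathcal{RSH}_{P,B}(\Omega)\subseteq \mathcal{SH}_{P,B}(\Omega)$, uniqueness in the larger cone immediately yields uniqueness in $\mathcal{RSH}_{P,B}(\Omega)$. To finish this direction I still have to verify that $\phi$ itself belongs to $\mathcal{RSH}_{P,B}(\Omega)$: because $\phi$ solves $(P,B)\phi=0$, the quantities $\phi P\phi$ and $\phi B\phi/\beta$ both vanish identically; and positive solutions of the mixed problem are H\"older continuous up to $\pwr$ (as recalled after the definition of weak solutions) and strictly positive on $\wb$, so $\phi,\phi^{-1}\in L^\infty_\loc(\wb)$.

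For the reverse direction I would argue by contrapositive, assuming $(P,B)$ is subcritical and producing two linearly independent elements of $\mathcal{RSH}_{P,B}(\Omega)$. By the cited \cite[Theorem~5.19]{PV2}, subcriticality supplies a positive minimal Green function $G^\Omega_{P,B}$. Fixing $0\lneqq \varphi\in\core$, the associated Green potential $G_\varphi$ from Definition~\ref{def_Gp} satisfies $(P,B)G_\varphi=\varphi$ weakly and is strictly positive and H\"older continuous on $\wb$. I then check $G_\varphi\in\mathcal{RSH}_{P,B}(\Omega)$ by noting that $G_\varphi,G_\varphi^{-1}\in L^\infty_\loc(\wb)$, that $G_\varphi BG_\varphi/\beta\equiv 0$ since $BG_\varphi=0$, and that $G_\varphi PG_\varphi=\varphi G_\varphi$ is locally bounded because $\varphi$ is compactly supported inside $\Gw$ where $G_\varphi$ is finite and continuous. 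Meanwhile, the assumption $\mathcal{H}^{0}_{P,B}(\Gw)\neq\emptyset$ supplies a positive solution $u$, which by the argument of the forward direction lies in $\mathcal{RSH}_{P,B}(\Gw)$. The two are linearly independent, since $G_\varphi=cu$ would force $\varphi=(P,B)G_\varphi=c(P,B)u=0$, contradicting $\varphi\gneqq 0$. This violates the hypothesized uniqueness.

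The main obstacle I anticipate is the forward direction in case one does not take the cited characterization as a black box. Building the uniqueness of the positive supersolution directly out of the non-existence of a Hardy-weight requires extracting a Hardy-weight from any two independent elements $u_1,u_2\in\mathcal{RSH}_{P,B}(\Gw)$; the standard route is to pass to the ground state transform $(P^{u_1},B^{u_1})$ (Definition~\ref{gs+transform}) and exploit the ratio $u_2/u_1$, which is a nontrivial positive supersolution there, to produce a nonnegative potential $V\gneqq 0$ with $(P-V,B)\ge 0$. Controlling the regularity of this $V$ (in particular establishing membership in $L^{p/2}_\loc$ required by Assumptions~\ref{assump2}) in the mixed-boundary setting, and verifying that the generalized maximum principle of Lemma~\ref{gen_max_weak} applies cleanly up to $\pwr$ throughout, is the delicate step.
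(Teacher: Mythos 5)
This lemma is not actually proved in the paper---it is imported verbatim from \cite[Lemma~4.7]{PV2}---so there is no ``paper's own proof'' to compare against. Judged on its own merits, your argument is correct, and the reverse direction is particularly clean and self-contained. Assuming subcriticality, $G_\varphi$ and any $u\in\mathcal{H}^0_{P,B}(\Gw)$ both lie in $\mathcal{RSH}_{P,B}(\Gw)$ by exactly the checks you give (Proposition~\ref{prop:GphiR} for $G_\varphi\in C^\alpha(\wb)$ and strict positivity up to $\pwr$; $G_\varphi PG_\varphi=\varphi G_\varphi\in L^\infty_\loc(\wb)$; $BG_\varphi=0$ on $\pwr$ since $\supp\varphi\Subset\Gw$), and they cannot be proportional because $(P,B)G_\varphi=\varphi\gneqq 0$ while $(P,B)u=0$. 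The forward direction, however, is not really a proof: you invoke the fact that criticality is equivalent to uniqueness of the positive supersolution in $\mathcal{SH}_{P,B}(\Gw)$ (stated in the introduction from \cite[Corollary~5.20]{PV2}), and then observe $\mathcal{RSH}\subseteq\mathcal{SH}$. But in the development of \cite{PV2} the numbering indicates that Lemma~4.7 precedes Theorem~5.19 and Corollary~5.20, so the logical order is almost certainly the reverse---the $\mathcal{RSH}$-uniqueness of Lemma~4.7 is the tool used to \emph{establish} the Green-function and supersolution characterizations, not a consequence of them. Using Corollary~5.20 to prove Lemma~4.7 is therefore circular at the level of \cite{PV2}, even if it is formally permitted inside the present paper where both are cited as black boxes. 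You are aware of this and flag it at the end; a genuinely non-circular forward direction would have to show directly that two linearly independent $u_1,u_2\in\mathcal{RSH}_{P,B}(\Gw)$ produce a Hardy-weight. The standard route, in the mixed-boundary setting, is: if $u_1$ is a strict regular supersolution then $W:=Pu_1/u_1\gneqq 0$ is already a Hardy-weight in $L^\infty_\loc(\wb)$ (this is exactly what the ``regular'' conditions guarantee); otherwise both are solutions, one passes to $(P^{u_1},B^{u_1})$ by the ground state transform of Definition~\ref{gs+transform}, and $v=u_2/u_1$ is a nonconstant positive solution of an operator with no zeroth-order term, from which a Hardy-weight is manufactured via a concave distortion of $v$ (cf.\ the $f_w$-construction in Theorem~\ref{prop:criticality}) together with the generalized maximum principle, Lemma~\ref{gen_max_weak}. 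That construction, with the boundary term on $\pwr$ tracked carefully, is the genuine content missing from your forward direction.
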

\begin{lem}[{\cite[Section~5]{PV2}}]
	Suppose that $H^{0}_{P,B}(\Gw)\neq \emptyset$. Then $(P,B)$ is subcritical in $\Gw$ if and only if $(P,B)$ admits a  unique positive minimal  Green function $G^{\Gw}_{P,B}(x,y)\in H^1_{\loc}(\wb\setminus \{y\})$ in $\Gw$ satisfying (in the sense of distributions)
$(P,B)G^{\Gw}_{P,B}(\cdot,y)=\delta_{y}$ in $\Gw$, where 
$\delta_{y}$ is the Dirac measure supported in $\{y\}.$
Moreover, $(P,B)$ is subcritical in $\Gw$ if and only if $(P^*,B^*)$ is subcritical in $\Gw$. In such a case,
$G^{\Gw}_{P,B}(x,y)=G^{\Gw}_{P^*,B^*}(y,x)$ for all $(x,y)\in \Gw\times \Gw$ satisfying $x\neq y$.
\end{lem}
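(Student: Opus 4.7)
The plan is to build the minimal Green function $G^{\Gw}_{P,B}(\cdot,y)$ as an increasing limit of Green functions on a Lipschitz exhaustion $\{\Gw_k\}_{k\in\N}$ of $\wb$, and then to read off both the characterization of subcriticality and the symmetry $G^{\Gw}_{P,B}(x,y)=G^{\Gw}_{P^*,B^*}(y,x)$ directly from this construction. First, for fixed $y\in\Gw$, approximate $\delta_y$ by a sequence $\varphi_m\in C_0^{\infty}(\Gw)$ with $\supp\varphi_m\subset B_{1/m}(y)\Subset\Gw_k$, $\int\varphi_m=1$, $\varphi_m\geq 0$. Applying Lemma~\ref{Lem_Fred} on $\Gw_k$ (with $g_0=\varphi_m$, $\mathbf g=0$) yields a unique solution $G_{k,m}(\cdot,y)\in H^1_{\partial\Gw_{k,\Dir}}(\Gw_k)$, which is nonnegative by the generalized maximum principle (Lemma~\ref{gen_max_weak}). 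Standard local elliptic estimates together with the weak Harnack inequality for $(P,B)$ yield $L^\infty_\loc$ and $H^1_\loc$ control of $G_{k,m}(\cdot,y)$ away from $y$, uniformly in $m$; letting $m\to\infty$ along a subsequence produces a positive $G_k(\cdot,y)\in H^1_\loc(\overline{\Gw_k}\setminus(\partial\Gw_{k,\Dir}\cup\{y\}))$ solving $(P,B)G_k(\cdot,y)=\delta_y$ in $\Gw_k$ in the sense of distributions, with the expected local singularity at $y$.

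Next, again by the generalized maximum principle applied on $\Gw_{k+1}\setminus\overline{B_{\varepsilon}(y)}$ to $G_{k+1}-G_k$ for small $\varepsilon>0$, the sequence $\{G_k(\cdot,y)\}_k$ is monotone increasing. The dichotomy of criticality/subcriticality is obtained by evaluating at a fixed $x_0\neq y$: either $M:=\sup_k G_k(x_0,y)<\infty$, or $M=\infty$. In the first case, the Harnack convergence principle (Lemma~\ref{HCP}) applied in $\Gw\setminus\{y\}$ produces a limit $G^{\Gw}_{P,B}(\cdot,y)\in \mathcal H^{0}_{P,B}(\Gw\setminus\{y\})$ with the required fundamental-solution singularity at $y$; its minimality then follows from Lemma~\ref{lem:min_growth} (if $v$ is any positive solution near $y$ with a singularity at $y$, then for each $l$, $v\geq c_l G_l(\cdot,y)$ on $\partial\Gw_{l,\Dir}\cup\partial B_\varepsilon(y)$ by the minimal-growth property, and one lets $\varepsilon\to 0$ and $l\to\infty$). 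In the second case, set $\tilde G_k(x):=G_k(x,y)/G_k(x_0,y)$; by Lemma~\ref{HCP}, a subsequence converges to $\phi\in\mathcal H^{0}_{P,B}(\Gw)$ (the pole at $y$ is washed out by normalization), and one checks using Lemma~\ref{inview} that $\phi$ is the \emph{unique} (up to scalar) regular positive supersolution, hence $(P,B)$ is critical; conversely, whenever $(P,B)$ is critical the same uniqueness forces $M=\infty$, so that the finite-limit case characterizes subcriticality. This simultaneously yields existence, uniqueness (Lemma~\ref{lem:min_growth}), and the stated $H^1_\loc$ regularity of $G^{\Gw}_{P,B}(\cdot,y)$.

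For the symmetry statement, I work on a fixed $\Gw_k$ and run the same construction for $(P^*,B^*)$, producing approximate Green functions $G_{k,m}^{*}(\cdot,x)\in H^1_{\partial\Gw_{k,\Dir}}(\Gw_k)$. Testing the weak formulation of $(P,B)G_{k,m}(\cdot,y)=\varphi_m(\cdot-y)$ against $G_{k,m'}^{*}(\cdot,x)$ and, symmetrically, the weak formulation of $(P^*,B^*)G_{k,m'}^{*}(\cdot,x)=\varphi_{m'}(\cdot-x)$ against $G_{k,m}(\cdot,y)$, one uses the adjoint identity displayed in the excerpt (valid in the weak sense after a density argument, since both test functions lie in $H^1_{\partial\Gw_{k,\Dir}}(\Gw_k)$ and the Robin boundary contributions cancel via the relation between $B$ and $B^*$) to obtain
\begin{equation*}
\int_{\Gw_k}G_{k,m'}^{*}(z,x)\varphi_m(z-y)\dz=\int_{\Gw_k}G_{k,m}(z,y)\varphi_{m'}(z-x)\dz .
\end{equation*}
Letting $m,m'\to\infty$ gives $G_k^*(y,x)=G_k(x,y)$ for all $x\neq y$ in $\Gw_k$. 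Finally, monotone passage $k\to\infty$ transports the identity to $\Gw$, and in particular shows that $\{G_k^*(\cdot,x)\}$ has a finite limit at a point iff $\{G_k(\cdot,y)\}$ does. Combined with the dichotomy above, this proves the equivalence $(P,B)\text{ subcritical}\Longleftrightarrow(P^*,B^*)\text{ subcritical}$ and the pointwise symmetry $G^{\Gw}_{P,B}(x,y)=G^{\Gw}_{P^*,B^*}(y,x)$.

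The main obstacle is the bookkeeping near $y$: one must ensure that the approximating sources $\varphi_m$ respect the $H^1_{\partial\Gw_{k,\Dir}}$ framework, that the solutions $G_{k,m}(\cdot,y)$ have the correct local singularity structure (so that the distributional identity $(P,B)G^{\Gw}_{P,B}(\cdot,y)=\delta_y$ survives the double limit), and that the Harnack/regularity estimates used to pass to the limit are compatible with the degenerate Robin boundary on $\pwr$. A secondary subtlety is justifying the cancellation of boundary terms in the adjoint pairing on $\partial\Gw_{k,\mathrm{Rob}}$ under only the regularity postulated in Assumptions~\ref{assump2}; this relies on the weak Robin boundary form built into the definition of $\mathcal B_{P,B}$ and its adjoint, rather than on any classical trace identity.
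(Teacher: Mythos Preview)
The paper does not prove this lemma; it is quoted verbatim from \cite[Section~5]{PV2} and stated without argument, as are the surrounding lemmas from that reference. There is therefore no ``paper's own proof'' to compare against.

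That said, your outline is the standard route and is essentially what is carried out in \cite{PV2}: build $G_k(\cdot,y)$ on an exhaustion, use the generalized maximum principle for monotonicity, and split on whether $\sup_k G_k(x_0,y)$ is finite. A few points deserve tightening. First, your appeal to Lemma~\ref{inview} in the $M=\infty$ branch is circular as written: you cannot ``check using Lemma~\ref{inview}'' that $\phi$ is the unique regular positive supersolution, since that lemma is the characterization you are trying to establish. The clean argument is to show directly that the normalized limit $\phi$ has minimal growth at infinity (it is the locally uniform limit of functions vanishing on $\partial\Gw_{k,\Dir}$, so compare with any competitor $v$ on $\partial\Gw_{l,\Dir}$ and pass to the limit), and then invoke Lemma~\ref{gs_implies_crit}. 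Second, the removability of the pole at $y$ after normalization needs the uniform two-sided control $G_k(x,y)\asymp G^{\Gw_1}_{P}(x,y)$ near $y$, which follows from the local Harnack inequality and comparison with the interior Dirichlet Green function; you should state this explicitly. Third, your monotonicity step compares $G_{k+1}$ and $G_k$ on $\Gw_{k+1}\setminus\overline{B_\varepsilon(y)}$, but $G_k$ is only defined on $\Gw_k$; you mean to compare on $\Gw_k\setminus\overline{B_\varepsilon(y)}$, where $G_k=0\leq G_{k+1}$ on $\partial\Gw_{k,\Dir}$ and the singular parts match near $y$. Finally, the adjoint pairing on $\Gw_k$ is exactly the identity $\mathcal B_{P,B}(u,\phi)=\mathcal B_{P^*,B^*}(\phi,u)$ for $u,\phi\in H^1_{\partial\Gw_{k,\Dir}}(\Gw_k)$, which holds at the level of the bilinear forms without any extra regularity, so your ``secondary subtlety'' is not an obstacle.
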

The following elementary result concerning Green potentials is a consequence of \cite[Section~5.2]{PV2}.
\begin{proposition}\label{prop:GphiR}
	Assume that $(P,B)$ is subcritical in $\Gw$, and
	let $0\lneqq \vgf\in \core$. Then
	\begin{enumerate}
		\item $0<G_{\vgf}\in H^{1}_{\loc}(\wb)\cap C^{\alpha}(\wb)$,
		\item $(P,B)G_{\vgf}=\vgf$ in $\Gw$,
		\item For any $x_0\in \Gw$, there exists $K\Subset_R \wb$ such that  $G_{\vgf}\asymp G_{P,B}^{\Gw}(x,x_0)$ in $\Gw \setminus K$.
	\end{enumerate}
\end{proposition}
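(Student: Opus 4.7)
My plan is to establish parts (1) and (2) by approximating $G_{\vgf}$ on an exhaustion of $\wb$, and to deduce part (3) from an interior Harnack inequality applied to the formal adjoint $(P^{*},B^{*})$. For parts (1) and (2), I would fix an exhaustion $\{\Gw_{k}\}_{k\in\N}$ of $\wb$ with $\supp\vgf \Subset \Gw_{1}$. By Lemma~\ref{Lem_Fred} applied on $\Gw'=\Gw_{k}$ with source $\Upsilon=\vgf$, there is a unique weak solution $u_{k}\in H^{1}_{\partial\Gw_{k,\Dir}}(\Gw_{k})$ of the mixed BVP $(P,B)u_{k}=\vgf$ in $\Gw_{k}$ with vanishing trace on $\partial\Gw_{k,\Dir}$, and the Green function representation on $\Gw_{k}$ yields $u_{k}(x)=\int_{\Gw_{k}} G^{\Gw_{k}}_{P,B}(x,y)\vgf(y)\dy$. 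Since the minimal Green function is constructed in Section~5 of \cite{PV2} as the monotone limit $G^{\Gw_{k}}_{P,B}\nearrow G^{\Gw}_{P,B}$, monotone convergence produces $u_{k}\nearrow G_{\vgf}$ pointwise in $\Gw$ (in particular $G_{\vgf}>0$). Combining uniform-in-$k$ local $H^{1}$-estimates with interior and up-to-$\pwr$ De~Giorgi--Nash--Moser $C^{\alpha}$-estimates (valid under Assumptions~\ref{assump2}), and invoking the Harnack convergence principle (Lemma~\ref{HCP}) away from $\supp\vgf$, will yield $G_{\vgf}\in H^{1}_{\loc}(\wb)\cap C^{\alpha}(\wb)$. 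Passing to the limit in the weak formulation of $(P,B)u_{k}=\vgf$ then gives $(P,B)G_{\vgf}=\vgf$ in $\Gw$, which is part (2).

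For part (3), I would set $K_{0}:=\supp\vgf \cup \{x_{0}\}$ and pick a connected Lipschitz domain $K$ with $K_{0}\subset K \Subset \Gw$; note that $K\Subset_{R}\wb$ holds automatically. For every $x\in \Gw\setminus \overline{K}$, the adjoint identity $G^{\Gw}_{P,B}(x,y)=G^{\Gw}_{P^{*},B^{*}}(y,x)$ shows that $v_{x}(y):=G^{\Gw}_{P,B}(x,y)$ is a positive solution of $(P^{*},B^{*})v=0$ on all of $K$, since its unique singularity at $y=x$ lies outside $K$. The classical interior Harnack inequality for $(P^{*},B^{*})$, applied to the compact set $K_{0}$ inside $K$, furnishes constants $c,C>0$ that depend only on $K$, $K_{0}$ and the coefficients (and hence are uniform over the family $\{v_{x}\}$), such that $c\, v_{x}(x_{0})\leq v_{x}(y)\leq C\, v_{x}(x_{0})$ for every $y\in K_{0}$. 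Integrating this bound against $\vgf(y)\dy$ yields
\[
c\,\|\vgf\|_{L^{1}}\, G^{\Gw}_{P,B}(x,x_{0}) \;\leq\; G_{\vgf}(x) \;\leq\; C\,\|\vgf\|_{L^{1}}\, G^{\Gw}_{P,B}(x,x_{0}) \qquad \forall x\in \Gw\setminus \overline{K},
\]
which is the desired asymptotic comparison on $\Gw\setminus K$.

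The main technical obstacle will be making the monotone convergence $u_{k}\to G_{\vgf}$ and the concomitant $H^{1}_{\loc}(\wb)\cap C^{\alpha}(\wb)$-regularity completely rigorous in the presence of the mixed boundary conditions, especially for regularity up to $\pwr$; however, these ingredients are precisely the ones developed in \cite{PV2} (maximum principle, Fredholm solvability, Harnack convergence, and H\"older estimates up to $\pwr$). By contrast, the Harnack step in part (3) is essentially immediate: it uses only standard \emph{interior} Harnack for the divergence-form operator $(P^{*},B^{*})$ on the fixed open set $K \Subset \Gw$, where boundary effects play no role.
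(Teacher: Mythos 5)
The paper supplies no proof of this proposition: it is stated as ``a consequence of \cite[Section~5.2]{PV2}'' and left at that, so there is no internal argument to compare yours against. Your reconstruction is correct and is built from exactly the ingredients the paper imports from \cite{PV2}: Fredholm solvability on an exhaustion (Lemma~\ref{Lem_Fred}), the minimal Green function as a monotone limit of the Green functions on $\Gw_k$, H\"older regularity up to $\pwr$ under Assumptions~\ref{assump2}, the Harnack convergence principle (Lemma~\ref{HCP}), and the adjoint identity $G^\Gw_{P,B}(x,y)=G^\Gw_{P^*,B^*}(y,x)$. Parts (1)--(2) via $u_k \nearrow G_\vgf$ and passage to the limit in the weak formulation are sound, granted the monotone construction of $G^{\Gw_k}_{P,B}\nearrow G^{\Gw}_{P,B}$ that you correctly attribute to \cite{PV2}.

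Part (3) is clean and essentially self-contained: since $K_0:=\supp\vgf\cup\{x_0\}$ is a fixed compact subset of $\Gw$, choosing $K$ with $K_0\Subset K\Subset\Gw$ lets you use only the \emph{interior} Harnack inequality for $P^*$ (no boundary condition enters), applied to $v_x(\cdot)=G^\Gw_{P^*,B^*}(\cdot,x)\in\mathcal{H}^0_{P^*}(K)$ uniformly over $x\in\Gw\setminus\overline{K}$, and integrating against $\vgf$. Two cosmetic remarks: (i) Harnack as you invoke it yields the comparison on $\Gw\setminus\overline{K}$; to literally obtain $\Gw\setminus K$ either apply Harnack with a slightly smaller intermediate domain $K'$ with $K_0\Subset K'\Subset K$, or note that the stated proposition only asserts existence of \emph{some} $K$ so you may rename $\overline{K}$; (ii) the uniform $H^1_{\loc}$ bounds needed to pass to the weak limit in (2) should be extracted from a Caccioppoli estimate on $\Gw_k$ (using the generalized maximum principle and the monotone pointwise bound $u_k\leq G_\vgf<\infty$ to control the zero-order term), which is standard but worth stating, since weak $H^1_{\loc}$ convergence, not just pointwise convergence, is what lets you pass to the limit in $\mathcal{B}_{P,B}(u_k,\phi)$.
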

\begin{defi}\label{def_gs}
	\em{
		A function $u\in \mathcal{H}^{0}_{P,B}(\Gw)$ is called an {\em (Agmon) ground state} of $(P,B)$ in $\Gw$  if $u$ has minimal growth in a neighborhood of infinity in $\Gw$. 
	}
\end{defi}
We conclude this section with the following lemma.
\begin{lemma}[{\cite[ Lemma~4.10]{PV2}}]\label{gs_implies_crit}
		Suppose that $H^{0}_{P,B}(\Gw)\neq \emptyset$.
	Then $(P,B)$ admits a ground state in $\Gw$ if and only if $(P,B)$ is critical in $\Gw$.  
\end{lemma}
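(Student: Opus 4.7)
The plan is to prove both implications by appealing to the two uniqueness results already at our disposal: uniqueness of the regular positive supersolution in the critical case (Lemma~\ref{inview}), and uniqueness of positive solutions of minimal growth in $\Gw \setminus \{x_0\}$ (Lemma~\ref{lem:min_growth}).

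For the direction $(\Rightarrow)$ I would argue by contradiction: suppose a ground state $\phi$ exists but $(P,B)$ is subcritical. Then the minimal positive Green function $G^{\Gw}_{P,B}(\cdot,x_0)$ exists and is, by its standard characterization, a positive solution in $\Gw\setminus\{x_0\}$ of minimal growth in a neighborhood of infinity. The ground state $\phi$, restricted to $\Gw\setminus\{x_0\}$, enjoys the same property because the condition in Definition~\ref{def:minimalgrowth} only tests supersolutions on $\Gw_l^*$ for large $l$, and for any $l$ with $x_0 \in \Gw_l$ we have $x_0 \notin \Gw_l^*$. Hence Lemma~\ref{lem:min_growth} forces $\phi = c\,G^{\Gw}_{P,B}(\cdot,x_0)$ on $\Gw\setminus\{x_0\}$ for some $c>0$, which is absurd since $\phi \in \mathcal{H}^0_{P,B}(\Gw)$ is locally bounded near $x_0$ while the Green function blows up there.

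For the converse, assume $(P,B)$ is critical and let $\phi \in \mathcal{RSH}_{P,B}(\Gw)$ be the unique (up to a positive scalar) regular positive supersolution provided by Lemma~\ref{inview}. Fix $x_0 \in \Gw_1$ and a reference point $x_1 \in \Gw_1 \setminus \{x_0\}$. Since Theorem~\ref{eeqiuv_add} gives $\lambda_0(P,B,1,\Gw_k)>0$ for every $k$, the operator $(P,B)$ is subcritical on each $\Gw_k$ and its minimal Green function $G_k(\cdot,x_0)$ exists. Set $v_k := G_k(\cdot,x_0)/G_k(x_1,x_0)$, so that $v_k(x_1)=1$. The Harnack convergence principle (Lemma~\ref{HCP}), applied on $\wb\setminus\{x_0\}$, extracts a locally uniformly convergent subsequence $v_k\to v$, where $v>0$ solves $(P,B)v=0$ in $\Gw\setminus\{x_0\}$.

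The crux is to rule out a singularity of $v$ at $x_0$. If $v(x)\to\infty$ as $x\to x_0$, then $G_k(x_1,x_0)$ must remain bounded, and a subsequence of the un-normalized $G_k(\cdot,x_0)$ converges locally uniformly on $\wb\setminus\{x_0\}$ to some $\tilde G$; passing to the limit in the defining variational identity for the Green functions (justified via Lemma~\ref{Lem_Fred} with test functions in $\mathcal{D}(\Gw,\pwd)$) exhibits $\tilde G$ as a positive minimal Green function for $(P,B)$ in $\Gw$, contradicting criticality. Therefore $v$ is bounded near $x_0$ and extends, by the standard removable-singularity argument for weak solutions of second-order elliptic equations, to $v \in \mathcal{H}^0_{P,B}(\Gw)\subset \mathcal{RSH}_{P,B}(\Gw)$; Lemma~\ref{inview} then forces $v=c\phi$ for some $c>0$, so $\phi$ is itself a positive solution. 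To conclude that $\phi$ has minimal growth at infinity, given $u\in C(\Gw_l^* \cup \partial\Gw_{l,\Dir})\cap \mathcal{SH}_{P,B}(\Gw_l^*)$ with $\phi\leq u$ on $\partial\Gw_{l,\Dir}$, I would apply the generalized maximum principle (Lemma~\ref{gen_max_weak}) to $v_k-(1+\eps)u$ on $\Gw_l^*\cap\Gw_k$ and let $k\to\infty$, then $\eps\to 0$. I expect this final step to be the main obstacle, because the mixed Dirichlet/Robin decomposition of $\partial(\Gw_l^*\cap\Gw_k)$ requires a careful choice of cutoff so that the Dirichlet trace of the comparison function vanishes exactly where Lemma~\ref{gen_max_weak} demands.
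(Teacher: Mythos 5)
Your overall plan --- both directions via the two uniqueness lemmas, together with an exhaustion-and-normalized-Green-function construction for the converse --- is the standard route through criticality theory, and the forward direction is fine. But there is a genuine gap in the ``crux'' paragraph of the converse, and it is not the step you flagged. You assert that ``if $v(x)\to\infty$ as $x\to x_0$ then $G_k(x_1,x_0)$ must remain bounded'' as if it were immediate; in fact this implication is precisely what needs proof. The clean logic is: first show $G_{k+1}\geq G_k$ on $\Gw_k$ (apply Lemma~\ref{gen_max_weak} to $G_{k+1}-G_k$, whose negative part lies in $H^1_{\partial \Gw_{k,\Dir}}(\Gw_k)$ since $G_k$ vanishes there), so $G_k(x_1,x_0)$ increases either to a finite limit --- in which case the un-normalized $G_k$ converge to a positive minimal Green function, contradicting criticality --- or to $+\infty$; and in the latter case you still must prove that $v=\lim v_k$ is bounded near $x_0$. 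That requires a local argument, e.g.\ fix a small ball $B=B_r(x_0)\Subset\Gw_1$ with Dirichlet Green function $g:=G_P^{B}(\cdot,x_0)$, note that $v_k - g/G_k(x_1,x_0)$ solves $Pw=0$ in $B$ with the same trace as $v_k$ on $\partial B$ (uniformly bounded in $k$ by Harnack chains from $x_1$), hence is uniformly bounded on $B$, while $g/G_k(x_1,x_0)\to 0$ on $B\setminus\{x_0\}$. Only then is the removable singularity of $v$ established; without something of this type, the converse is not proved.

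By contrast, the step you anticipated as the obstacle --- the final minimal-growth verification --- goes through with no cutoff trickery at all: on $\partial\Gw_{k,\Dir}$ you have $v_k=0<(1+\eps)u$, on the compact set $\partial\Gw_{l,\Dir}$ the locally uniform convergence $v_k\to v$ gives $v_k\leq(1+\eps)u$ for $k$ large, Corollary~\ref{rem:ahlors} places $(v_k-(1+\eps)u)^{+}$ in $H^1_{\partial D_{\Dir}}(D)$ for $D=\Gw_k\setminus\overline{\Gw_l}$, and Lemma~\ref{gen_max_weak} finishes. One more point worth a sentence: your appeal to the Harnack convergence principle on $\Gw\setminus\{x_0\}$ adjoins a single point to $\pwd$, so if $\Gw$ is bounded with $\pwd=\emptyset$ the standing assumption $\mathrm{int}(\pwd)\neq\emptyset$ fails for $\Gw\setminus\{x_0\}$; you should observe that this assumption plays no role in Lemma~\ref{HCP} once $(P,B)\geq 0$ is known.
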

\section{Optimal Hardy inequalities for mixed boundary value problems}\label{sec:optimal}
The aim of this section is to prove theorems~\ref{main_thm} and \ref{thm3} concerning the existence of  families of  optimal Hardy-weights (see Definition~\ref{rem_1.3}). The case $\pwr=\emptyset$ (i.e., Dirichlet-Hardy-weights), has been studied in \cite{DFP,PV}.
We remark that all the results in this paper include the Dirichlet case, $\pwr=\emptyset$. 


First, we prove a Khas'minski\u{i}-type criterion for the criticality of the mixed boundary value problem \eqref{P,B} (cf. \cite[Proposition~6.1]{DFP} and references therein).
\begin{lem}[Khas'minski\u{i}-type criterion]\label{lem:Khasminsky}
  Let $u_0\in \mathcal{H}^{0}_{P,B}(\Gw)$ and $u_1\in
  \mathcal{SH}_{P,B}(\Gw\setminus K)\cap C(\overline{\Gw}\setminus (\pwd \cup K))$, where $K\Subset_R \Gw$ is a Lipschitz subdomain. Assume that 
  \begin{equation}\label{eq:K_crit}
  \lim_{x\to \infty_{\Dir}}\frac{u_0(x)}{u_1(x)}=0.
  \end{equation} 
  Then $u_0$ is a  a ground state of $(P,B)w=0$ in $\Gw$, and therefore, $(P,B)$ is critical in $\Gw$. 
\end{lem}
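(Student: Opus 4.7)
The plan is to show that $u_0$ has minimal growth in a neighborhood of infinity in $\Gw$; criticality of $(P,B)$ then follows at once from Lemma~\ref{gs_implies_crit}. Since $u_0\in\mathcal{H}^0_{P,B}(\Gw)$ by hypothesis, the content is the comparison property in Definition~\ref{def:minimalgrowth}. Fix $l_0$ large enough that $\overline{K}\subset\Gw_{l_0}$; then for every $l>l_0$, $u_1$ is a positive supersolution of $(P,B)$ in $\Gw_l^*$ that is continuous on $\overline{\Gw_l^*}\setminus\pwd$. Let $l>l_0$ and consider $v\in C(\Gw_l^*\cup\partial\Gw_{l,\Dir})\cap\mathcal{SH}_{P,B}(\Gw_l^*)$ satisfying $u_0\leq v$ on $\partial\Gw_{l,\Dir}$; the goal is to establish $u_0\leq v$ throughout $\Gw_l^*$.

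Fix $\epsilon>0$. By~\eqref{eq:K_crit} together with the H\"older continuity of positive $(P,B)$-solutions up to $\wb$ from \cite[Section~3.2]{PV2}, there exists $m_\epsilon>l$ with $u_0\leq\epsilon u_1$ on $\Gw_{m_\epsilon}^*$ and, by continuous extension, on $\partial\Gw_{m,\Dir}$ for every $m\geq m_\epsilon$. For such $m$, consider the bounded Lipschitz subdomain $D_m:=\Gw_m\setminus\overline{\Gw_l}\Subset_R\Gw$, with
$$\partial D_{m,\Dir}=\partial\Gw_{l,\Dir}\cup\partial\Gw_{m,\Dir},\qquad \partial D_{m,\mathrm{Rob}}=\partial\Gw_{l,\mathrm{Rob}}\cup\partial\Gw_{m,\mathrm{Rob}},$$
where the Lipschitz character of $\partial D_{m,\mathrm{Rob}}$ is built into the exhaustion (Remark~\ref{rem:boundness}). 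Set
$$w:=v+\epsilon u_1-u_0\in H^1(D_m).$$
By linearity, $w$ is a supersolution of $(P,B)$ in $D_m$, and on $\partial D_{m,\Dir}$ we have $w\geq 0$: on $\partial\Gw_{l,\Dir}$ the hypothesis $u_0\leq v$ yields $w\geq\epsilon u_1\geq 0$, and on $\partial\Gw_{m,\Dir}$ the choice of $m_\epsilon$ yields $w\geq v\geq 0$. Since $u_0$, $v$, and $u_1$ are continuous on $\overline{D_m}$, Corollary~\ref{rem:ahlors}(2) gives $w^-\in H^1_{\partial D_{m,\Dir}}(D_m)$, and the generalized maximum principle (Lemma~\ref{gen_max_weak}) then implies $w\geq 0$ in $D_m$.

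Because $\Gw_l^*=\bigcup_{m>l_0}D_m$, the previous step yields $u_0\leq v+\epsilon u_1$ throughout $\Gw_l^*$; letting $\epsilon\to 0^+$ gives $u_0\leq v$ on $\Gw_l^*$, as desired. Hence $u_0$ has minimal growth at infinity, is therefore a ground state of $(P,B)$ in $\Gw$, and Lemma~\ref{gs_implies_crit} concludes that $(P,B)$ is critical in $\Gw$. The principal technical obstacle is the application of Lemma~\ref{gen_max_weak} on each $D_m$: one must secure both the $H^1_{\partial D_{m,\Dir}}$-membership of $w^-$ (through Corollary~\ref{rem:ahlors}(2), which in turn requires continuity of $u_0$, $v$, and $u_1$ up to the Robin portion of $\partial D_m$, supplied by the regularity theory in \cite{PV2}) and the Lipschitz regularity of $\partial D_{m,\mathrm{Rob}}$---both of which rest on the careful setup of the exhaustion $\{\Gw_k\}$.
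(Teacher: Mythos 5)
Your proof is correct and establishes the statement by directly verifying the definition of minimal growth, whereas the paper's proof takes a slightly different route. The paper fixes the Green function $G(x)=G_{P,B}^{\Gw}(x,x_0)$ (which has minimal growth by Lemma~\ref{lem:min_growth}), observes that $C^{-1}G\leq u_0\leq CG$ on a fixed annulus, and then runs the Khas'minski\u{i} trick with the \emph{specific} comparison supersolution $CG+\eps u_1$ in $D_k=\Gw_k\setminus K'$. After sending $k\to\infty$ and $\eps\to 0$, the paper obtains the two-sided estimate $u_0\asymp G$ near infinity and concludes that $u_0$ is a ground state. You instead apply the same trick with an \emph{arbitrary} competing supersolution $v$ from Definition~\ref{def:minimalgrowth}, comparing $u_0$ against $v+\eps u_1$ in $D_m=\Gw_m\setminus\overline{\Gw_l}$. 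The key ingredients are identical in both arguments: the exhaustion subordinated to $\wb$, the perturbation by $\eps u_1$, the membership $w^-\in H^1_{\partial D_{m,\Dir}}$ via Corollary~\ref{rem:ahlors}, and the generalized maximum principle (Lemma~\ref{gen_max_weak}).

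What each buys: your version avoids invoking the Green function altogether and reads off the conclusion directly from the definition of minimal growth, which is arguably cleaner. The paper's version gets, as a byproduct, the two-sided comparison $u_0\asymp G$ near infinity, and its comparison function $CG+\eps u_1$ is manifestly in $C(\overline{\Gw}\setminus(\pwd\cup K))$, so Corollary~\ref{rem:ahlors}(2) applies without comment. In your argument one needs to be a bit careful with the claim that ``$v$ is continuous on $\overline{D_m}$'': the competing supersolution $v$ in Definition~\ref{def:minimalgrowth} is only assumed to lie in $C(\Gw_l^*\cup\partial\Gw_{l,\Dir})$, which need not extend continuously to the Robin portion $\partial D_{m,\mathrm{Rob}}\subset\pwr$. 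This is not a gap in substance, because the only continuity that Corollary~\ref{rem:ahlors}(2) (via the third criterion of Lemma~\ref{lem:alhors_2}) actually uses is continuity in a neighborhood of $\partial D_{m,\Dir}$, which is contained in $\Gw_l^*\cup\partial\Gw_{l,\Dir}$ and hence covered by the assumptions on $v$; but you should phrase it that way rather than asserting continuity on all of $\overline{D_m}$.
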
 
\begin{proof}
	We need to prove that $u_0$ has minimal growth at infinity. Let $\{\Gw_k\}_{k\in \N}$ be an exhaustion of 
	$\wb$ and a Lipschitz subdomain $K'\Subset_R \Gw$ such that $K\Subset_R K'\Subset_R \Gw_1$, and $\partial(\Gw\setminus K')\cap \pwr$ is  Lipschitz  (see Remark \ref{rem:boundness}).
	Fix $x_0\in K$ and let $G(x)\in \mathcal{H}^{0}_{P,B}(\Gw \setminus \{x_0\})$ having minimal growth in a neighborhood of infinity in $\Gw$.  
 Let $C>1$ be fixed such that 
 \begin{equation}\label{eq:u_less_CG}
 C^{-1} G(x)\leq u_0(x) \leq CG(x) \qquad  \mbox{for all~} x\in \overline{\Gw_1 \setminus  K'}.
	\end{equation}
	The minimal growth  of $G(x)$ implies
	\begin{equation}\label{eq:letting_epsilon}
	C^{-1}G\leq  u_0 \qquad \mbox{in~} \Gw\setminus K'.
	\end{equation}
	Furthermore, \eqref{eq:K_crit} implies that for any $\varepsilon>0$, the exists $k_{\varepsilon}$ such that  
	for any $k\geq k_{\varepsilon}$ 
	$$u_0\leq \varepsilon u_1\leq CG+\varepsilon u_1 \qquad  \mbox{on~} \partial{\Gw_{k}\cap \Gw}.$$ 
	Notice that $u_0$ and $CG+\varepsilon u_1$ belong to 
	$C(\overline{\Gw}\setminus(\pwd \cup K))\cap \mathcal{SH}(\Gw\setminus K)$.
	Consider the set $D_k:=\Gw_k \setminus K'$.
	 Then, Corollary \ref{rem:ahlors} and \eqref{eq:u_less_CG}  imply that  $(CG+\varepsilon u_1-u_0)^{-}\in H^1_{\partial D_{k,\Dir} }(D_k)$.
	 The generalized maximum principle (Lemma \ref{gen_max_weak}) in $D_k$ then implies
	 \begin{equation*}
	 u_0\leq CG+\varepsilon u_1 \qquad \mbox{in~} D_k.
	 \end{equation*}
	Letting $k\to \infty $, we obtain 
	 \begin{equation*}
	u_0\leq CG+\varepsilon u_1 \qquad \mbox{in~} \Gw\setminus K'.
	\end{equation*}
	Letting $\varepsilon\to 0$ and \eqref{eq:letting_epsilon}  imply, 
	$u_0\asymp G$ in $\Gw\setminus K'$, namely, $u_0$ is a ground state, and therefore, $(P,B)$ is critical in $\Gw$. 
\end{proof}
As a corollary of Lemma \ref{lem:Khasminsky} we obtain the criticality claim in Theorem~\ref{main_thm}. 
\begin{theorem}\label{prop:criticality}
	Let Assumptions~\ref{assump2} hold in a domain $\Omega \! \subset \! \mathbb{R}^n$, ($n \! \geq \! 2$). Let $(P,B)$ be a subcritical   operator  in $\Omega$ and let $G(x):=G_{P,B}^{\Omega}(x,x_0)$ be its  minimal positive Green function with singularity at $x_0\in \Gw$.   Let $G_{\varphi}$ be the corresponding Green potential with  density $0\lneqq \varphi \in \core$. Assume that there exists $u\in \mathcal{H}^{0}_{P,B}(\Omega)$  satisfying Ancona's condition
	\begin{equation}\label{eq:G/u_tends}
	\lim_{x \to \infty_{\Dir}} \dfrac{G(x)}{u(x)}=0.
	\end{equation}
	
	Let $$ 0 \leq   a \, \leq \; \dfrac{1}{\sup\limits_{\Gw}{(G_{\varphi} /u)}}\, ,\quad
	f_w(t):=\sqrt{2t-at^2}, \quad W:=\frac{P \left (uf_w \left ( G_{\varphi} /u\right) \right )}{uf_w \left ( G_{\varphi} /u\right)} \, .
 	$$ 
	Then $(P-W,B)$
	is critical in $\Gw$.
\end{theorem}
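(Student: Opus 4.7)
The plan is to apply the Khas'minski\u{i}-type criterion (Lemma~\ref{lem:Khasminsky}) to the operator $(P-W,B)$. I will exhibit a positive solution $u_0:=v_0:=u f_w(v)$, where $v:=G_{\vgf}/u$, together with a positive supersolution $u_1$ of $(P-W,B)$ on the complement of some Lipschitz $K\Subset_R\Gw$, with $u_0/u_1\to 0$ at $\infty_{\Dir}$. The criterion will then force $v_0$ to be a ground state of $(P-W,B)$, so by Lemma~\ref{gs_implies_crit} the operator $(P-W,B)$ is critical in $\Gw$.

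The computations are cleanest via the ground-state transform of $(P,B)$ with weight $u$. Since $(P,B)u=0$, the transform reduces to $P^u w=-u^{-2}\diver(u^2 A\nabla w)+(\bb-\bt)\cdot\nabla w$ and $B^u w=\beta\langle A\nabla w,\vec{n}\rangle$, and the relation $(P,B)G_{\vgf}=\vgf$ becomes $P^u v=\vgf/u$ with $B^u v=0$. A chain-rule calculation for any smooth $g$ yields $P^u(g(v))=g'(v)\,\vgf/u-g''(v)\,|\nabla v|_A^2$ and $B^u(g(v))=g'(v)B^u v=0$. A direct verification shows $f_w''(t)=-1/f_w(t)^3$, so taking $g=f_w$ and multiplying by $u$ gives $P v_0=W v_0$ with $B v_0=0$, where the $W$ produced by the computation agrees with the one in the statement. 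Thus $v_0\in\mathcal{H}^{0}_{P-W,B}(\Gw)$, and $v_0>0$ because $v>0$ and $av\le 1$ ensure $f_w(v)>0$.

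To construct $u_1$, I observe that $y=f_w$ solves the one-dimensional ODE $y''+y/f_w^4=0$, and reduction of order gives a second positive solution on any interval $(0,t_0)$, namely
\[ h(t):=f_w(t)\int_{t}^{t_0} f_w(s)^{-2}\,\mathrm{d}s. \]
Proposition~\ref{prop:GphiR} gives $G_{\vgf}\asymp G$ near $\infty_{\Dir}$, so Ancona's condition \eqref{eq:G/u_tends} forces $v\to 0$ at $\infty_{\Dir}$. Fixing $t_0$ small, I pick a Lipschitz $K\Subset_R\Gw$ with $\supp(\vgf)\subset K$ and $v<t_0$ on $\Gw\setminus K$. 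Repeating the chain-rule calculation with $g=h$, and using $\vgf=0$ together with $h''+h/f_w^4=0$ on the relevant range, $u_1:=uh(v)$ is a positive solution, hence a supersolution, of $(P-W,B)$ on $\Gw\setminus K$.

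Finally, $v_0/u_1=f_w(v)/h(v)=\bigl(\int_{v}^{t_0} f_w(s)^{-2}\,\mathrm{d}s\bigr)^{-1}\to 0$ at $\infty_{\Dir}$, because $f_w(s)^{-2}\sim (2s)^{-1}$ near $s=0$ makes the integral diverge as $v\to 0^+$. Lemma~\ref{lem:Khasminsky} then delivers criticality. The main technical hurdle is the rigorous justification of the chain rule at the level of the weak formulation carrying the Robin condition: the pointwise identity $A\nabla v\cdot\vec{n}=0$ on $\pwr$, obtained by dividing $BG_{\vgf}=0$ and $Bu=0$ by $u$ and subtracting, ensures that any smooth composition with $v$ stays in the kernel of $B^u$; the interior equation is the explicit formula \eqref{eq:W_explicitly} on $\Gw\setminus\supp(\vgf)$, while on $\supp(\vgf)$ the relation $Pv_0=Wv_0$ holds by the very definition of $W$.
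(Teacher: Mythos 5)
Your proposal is correct and follows essentially the same route as the paper: both exhibit $u f_w(G_\vgf/u)$ as a positive solution of $(P-W,B)$, build a comparison supersolution from the reduction-of-order companion of $f_w$, and conclude via the Khas'minski\u{i}-type criterion (Lemma~\ref{lem:Khasminsky}) together with Ancona's condition. Your use of the ground-state transform $(P^u,B^u)$ to package the chain rule and the Robin compatibility is a clean way to organize the computation that the paper carries out more directly, and restricting the companion function $h$ to $\{v<t_0\}\subset\Gw\setminus K$ rather than normalizing $G_\vgf/u<1$ is a harmless cosmetic variant.
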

\begin{proof}
By Proposition \ref{prop:GphiR} and Remark \ref{rem:boundness2}, we may assume without loss of generality 
that $G_{\varphi}/u<1$ in $\Gw$. 
	 Hence, $f_w(G_\vgf/u)$ is well defined and $f_w'(G_\vgf/u)>0$ in $\Gw$ .  Let $w(t):=(2t-at^2)^{-2}$. It can be easily checked  that  the functions $f_w$ and  
	$$f_1(t):=f_w(t)\int\limits_{t}^{1}\dfrac{1}{f_w^2(s)}\ds$$
	are linearly independent solutions of the equation
$$-y''-wy=0 \qquad \mbox{in } \R_{+}  $$
 which is related to the Ermakov-Pinney equation $-y'' =\frac{1}{y^3}$ (see \cite{PV2}).  
Moreover, $f_1$   is positive for $t<1$, negative for $t>1$, and satisfies
	\begin{equation}\label{eq:3.3}
	\lim_{t\to 0}\dfrac{f_w(t)}{f_1(t)}=\lim_{t\to \infty}\dfrac{f_w(t)}{f_1(t)}=0.
	\end{equation}
	
	Consider the functions $h,v:\Gw\to \R$ given by 
	$$v(x):=u(x)f_w \left(\dfrac{G_{\varphi}(x)}{u(x)}\right),  \qquad 
	h(x):=
	u(x)f_1 \left(\dfrac{G_{\varphi}(x)}{u(x)}\right),
	$$
	and recall that $G_{\vgf}$ is a positive solution of the problem 
	$$
	\begin{cases}
	Pw=\vgf& \mbox{in~} \Gw, \\
	Bw=0 & \mbox{on~} \pwr.
	\end{cases}
	$$
	A direct calculation
	 (see (4.13) in \cite{DFP}) shows that
    if	
      $0<G_{\vgf}/u<1$ in $\Gw$, then   we have 
	\begin{align}\label{Puv}
	Pv=& P(uf_w(G_{\vgf}/u)) = -uf_w''(G_{\vgf}/u)|\nabla (G_{\vgf}/u)|_A^2+uf_w'(G_{\vgf}/u)P(G_{\vgf}) = \nonumber\\ 
	& uf_w(G_{\vgf}/u)  w(G_{\vgf}/u) |  \nabla (G_{\vgf}/u)|_A^2 + uf_w'(G_{\vgf}/u)P(G_{\vgf})=Wv\! \geq \!0 \quad \mbox{in } \Gw,
	\end{align}
	and
	\begin{align} \label{Puv2}
	Ph= & \;P(uf_1(G_{\vgf}/u)) =    -uf_1''(G_{\vgf}/u)|\nabla (G_{\vgf}/u)|_A^2+uf_1'(G_{\vgf}/u)P(G_{\vgf}) =\nonumber \\
	& uf_1(G_{\vgf}/u) w(G_{\vgf}/u) |\nabla (G_{\vgf}/u)|_A^2+ uf_1'(G_{\vgf}/u)P(G_{\vgf}) \! \geq \! 0 \quad \mbox{in } \Gw.
	\end{align}
	Moreover,
		\begin{align*}
	&
	\beta  \frac{f_w'(G_{\vgf}/u)}{u}\langle uA\nabla G_{\vgf}-G_{\vgf}A\nabla u  ,\vec{n}\rangle = \\&
	f_w'(G_{\vgf}/u)\left(\beta  \langle A\nabla G_{\vgf} + G_{\vgf}\bt ,\vec{n}\rangle+ \gg G_{\vgf} \right)= 
	f_w'(g/u)BG_{\vgf} =0 
	\end{align*}
on $\pwr$ in the weak sense.	Therefore,
	\begin{align*}
	&Bv=
	B(uf_w(G_{\vgf}/u))=
	\beta f_w(G_{\vgf}/u)\langle A \nabla u  ,\vec{n} \rangle+\\&
	\beta  \frac{f_w'(G_{\vgf}/u)}{u}\langle uA\nabla G_{\vgf}-G_{\vgf}A\nabla u  ,\vec{n}\rangle 
	+\beta \langle uf_w(G_{\vgf}/u)\bt,\vec{n}\rangle+
	\gamma  uf_w(G_{\vgf}/u)  =  \\ &
	f_w(G_{\vgf}/u)Bu =  0.
	\qquad \mbox{ on~} \pwr \mbox{~in the weak sense}.
	\end{align*}
	Similarly, one can verify that $B(h) =  0$ on $\pwr$ in the weak sense. 
	As a result,  we obtain that  $v\in \mathcal{H}_{P-W,B}^{0}(\Gw)$ and $ h\in \mathcal{SH}_{P-W,B}(\Gw)\cap C(\overline{\Gw}\setminus \pwd )$. 
	Moreover,  \eqref{eq:G/u_tends} and \eqref{eq:3.3} imply
	\begin{equation*}
	\lim_{x\to \infty_{\Dir}}\dfrac{v(x)}{h(x)}=\lim_{x\to \infty_{\Dir}}\dfrac{u(x)f_w\left(\dfrac{G_{\varphi}(x)}{u(x)}\right)}{u(x)f_1 \left(\dfrac{G_{\varphi}(x)}{u(x)}\right)}=0.
	\end{equation*}
	By Lemma \ref{lem:Khasminsky}, $v(x)$ is a  ground state of $(P-W,B)$ in $\Gw$, and therefore, $(P-W,B)$ is critical in $\Gw$.
\end{proof}

\begin{rem}
	\em{
 Sufficient conditions for the existence of a function $u$ satisfying Ancona's condition \eqref{eq:G/u_tends} are known in the case $\pwr=\emptyset$. Indeed, Ancona  proved in \cite{Ancona02} that if $P$ is symmetric (or more generally quasi-symmetric in the sense of Ancona) such a positive solution $u$ exists. Moreover, Ancona gave a counter example of a nonsymmetric operator \cite{Ancona02} that does not admit such a positive solution $u$. It seems that Ancona's approach of constructing such a solution $u$  applies also to our setting \cite{Ancona03}.
		  
			Clearly,  in the nonsymmetric case, the existence of such a $u$ is guaranteed if  $\Gw$ is a bounded Lipschitz domain,  the coefficients of $(P,B)$ are up to the boundary regular  enough, $(P,B)\mathbf{1}=0$ in $\Gw$, and  $\pwr$, $\pwd \neq \emptyset$ are both relatively open and closed  disjoint smooth bounded sets. 
}
\end{rem}
 Note that $uf_w(G_\varphi/ u)$, the ground state of $(P-W,B)$ in $\Gw$, is H\"older continuous in $\wb$, and $W$ in \eqref{eq:W_explicitly} belongs  to $L^1_{\loc}(\wb)$. The following lemma  guarantees  the  H\"older continuity of the ground state of $(P^*-W,B^*)$, the adjoint operator of $(P-W,B)$.
\begin{lem}\label{lem:sim_argument}
	Let $W\in L^1_{\loc}(\wb)$, and assume that
	$(P-W,B)$ is critical in $\Gw$ with a ground state   $\psi \in \mathcal{H}^{0}_{P-W,B}(\Gw)\cap C^{\alpha}(\wb)$,  and let $\psi^{*}\in \mathcal{H}_{P-W,B}^{0}(\Gw)$ be the ground state of $((P-W)^*,B^*)$ in $\Gw$.
	Then $\psi^*=g^* \psi$, where
	$g^*$ is the ground state of $(((P-W)^\psi)^*,(B^\psi)^*)$ in $\Gw$. In particular, $\psi^*\in C^{\alpha}(\wb)$.
\end{lem}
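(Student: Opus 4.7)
The approach is to use the ground state transform with respect to $\psi$ to eliminate the irregular term $W$ from the underlying bilinear form, and then to identify $\psi^*$ through the transformed ground state. By Definition~\ref{gs+transform} and Remark~\ref{rem:adj_Pu}, division by $\psi$ furnishes a bijection $v \mapsto v/\psi$ between positive (super)solutions of $(P-W,B)$ in $\Gw$ and those of $((P-W)^\psi, B^\psi)$ in $\Gw$, and similarly $u \mapsto u/\psi$ is a bijection between positive (super)solutions of $((P-W)^*, B^*)$ and those of $(((P-W)^\psi)^*, (B^\psi)^*)$, the latter using the identity $((P-W)^\psi)^*w = (P-W)^*(\psi w)/\psi$ together with the corresponding boundary identity $(B^\psi)^*w = B^*(\psi w)/\psi$. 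Since $(P-W)\psi = 0$ and $B\psi = 0$, the constant $\mathbf{1}$ is a positive solution of the forward transformed problem.

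First I would transfer criticality. By Lemma~\ref{inview}, criticality of $(P-W,B)$ is equivalent to uniqueness up to scalar of its regular positive supersolutions, the unique one being $\psi$. The forward bijection sends $\psi$ to $\mathbf{1}$, so $\mathbf{1}$ is the unique (up to scalar) regular positive supersolution of $((P-W)^\psi, B^\psi)$, whence this transformed operator is also critical. By \cite[Corollary~5.20]{PV2} its adjoint $(((P-W)^\psi)^*, (B^\psi)^*)$ is then critical as well, and admits a unique (up to scalar) ground state $g^*$. Applying the adjoint bijection in the reverse direction, $\psi g^*$ is a positive solution of $((P-W)^*, B^*) u = 0$ in $\Gw$, so by criticality and the one-dimensionality of the cone of its positive supersolutions, a suitable normalization of $g^*$ gives $\psi^* = g^* \psi$.

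For the Hölder regularity I would exploit the crucial structural fact that the bilinear form $\mathcal{B}_{(P-W)^\psi, B^\psi}$ from Definition~\ref{gs+transform} contains \emph{no explicit $W$}: because $(P-W)\psi = 0$ in $\Gw$ and $B\psi = 0$ on $\pwr$, the interior mass term $u(Pu) w\phi$ and the boundary mass term $\frac{u Bu}{\beta} w \phi$ both drop out, leaving
\begin{equation*}
\mathcal{B}_{(P-W)^\psi, B^\psi}(w,\phi) = \int_\Gw \bigl[\psi^2 A \nabla w \cdot \nabla \phi + (\bb - \bt)\cdot \nabla w\, \phi\, \psi^2 \bigr]\dx .
\end{equation*}
The adjoint form inherits the same structure, with uniformly elliptic principal matrix $\psi^2 A$ that is locally bounded measurable on compacts of $\wb \setminus \pwd$ (using $\psi \in C^\alpha(\wb)$ positive on $\Gw \cup \pwr$), $L^p_\loc$ lower-order coefficients, and a Neumann-type condition on the $C^1$-portion $\pwr$. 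Standard De Giorgi--Nash--Moser theory then yields $g^* \in C^\alpha_\loc(\wb \setminus \pwd)$, hence $\psi^* = g^* \psi \in C^\alpha_\loc(\wb \setminus \pwd)$. To extend Hölder continuity up to $\pwd$, the hypothesis $\psi \in C^\alpha(\wb)$ with $\psi = 0$ on $\pwd$ yields $|\psi(x)| \leq C\,\mathrm{dist}(x,\pwd)^\alpha$, so it suffices to produce a local upper bound for $g^* = \psi^*/\psi$ in a neighborhood of $\pwd$; such a bound follows from a boundary Harnack / Carleson-type comparison between the two positive solutions $\psi$ and $\psi^*$ both vanishing on $\pwd$, combined with the minimal-growth characterization of $\psi^*$. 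The main obstacle I foresee is precisely this final control of the ratio $g^*$ near $\pwd$; everything else is a routine application of the ground-state-transform bijections together with the absence of $W$ in the transformed bilinear form.
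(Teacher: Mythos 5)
Your core approach coincides with the paper's: use the ground state transform with respect to $\psi$, observe that $(P-W)\psi=0$ and $B\psi=0$ kill all $W$-dependence in the transformed bilinear form so that $((P-W)^\psi,B^\psi)$ (and its adjoint) satisfy Assumptions~\ref{assump2}, invoke standard De Giorgi--Nash--Moser regularity for the transformed adjoint operator to conclude $g^*\in C^{\alpha}(\wb)$, and then recover $\psi^*=g^*\psi$ via Remark~\ref{rem:adj_Pu}. The bijection between positive supersolutions before and after the ground-state transform, together with the equivalence of criticality of $(P,B)$ and $(P^*,B^*)$, gives the identification of $\psi^*$ exactly as you say.

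However, the ``main obstacle'' you flag at the end --- controlling $g^*$ near $\pwd$ via a boundary-Harnack or Carleson estimate --- is not a genuine gap: it stems from a misreading of the notation. Here $\wb=\overline{\Gw}\setminus\pwd$, so $C^{\alpha}(\wb)$ is Hölder continuity (locally) on $\overline{\Gw}\setminus\pwd$ only, and asserts \emph{nothing} at the Dirichlet boundary. Likewise, there is no hypothesis that $\psi=0$ on $\pwd$; positive solutions of the mixed problem need not vanish there, and the paper never needs this. Once you drop the spurious attempt to ``extend Hölder continuity up to $\pwd$,'' the argument you already wrote --- interior and Robin-boundary regularity for the transformed adjoint equation, followed by multiplying two locally Hölder functions --- is complete and is essentially verbatim the paper's proof, which simply states that $g^*$ is a weak solution of an operator satisfying Assumptions~\ref{assump2}, hence $g^*\in C^{\alpha}(\wb)$, hence $\psi^*=g^*\psi\in C^{\alpha}(\wb)$.
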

\begin{proof}
	 Since $g^*$ is a weak solution of an operator satisfying Assumptions~\ref{assump2}, $g^*\in C^{\alpha}(\wb)$. Consequently, Remark \ref{rem:adj_Pu} implies that $\psi^*=g^* \psi\in C^{\alpha}(\wb)$.
\end{proof}

We continue with  the following elementary proposition which is needed later in the paper.

	\begin{proposition}{\cite[Proposition~6.3]{PV}}\label{SL_properties}	
	Let $w(t)=(2t-at^2)^{-2}$. Then for any $\xi,M,a>0$, and $t\in (0,2/a )$ the function 
	\be\label{eq_uxi}
	u_{\xi}(t):=\sqrt{2t-at^2}\cos\left(\dfrac{\xi}{2}\log \left (\dfrac{Mt}{2-at} \right )\right)
	\ee
	 satisfies the following properties:
	\begin{enumerate}
		\item $ -u_{\xi}''-(1+\xi^2)wu_{\xi}=0 \quad \mbox{in } (2/(M\mathrm{e}^{\pi/\xi}+a), 2/(M+a))$,\\ 
		\item The oblique boundary condition: $u_\xi ' (2/(M+a))=\dfrac{M^2-a^2}{4M}u_\xi (2/(M+a))$,\\
		\item The Dirichlet boundary condition: $u_{\xi}\! \left(2/(M\mathrm{e}^{\pi/\xi}+a)\right)=
		u_{3\xi}\! \left(2/(M\mathrm{e}^{\pi/\xi}+a)\right)=0$,\\
		\item $u_{\xi}$ converges pointwise to $\sqrt{2t-at^2}$ as $\xi \to 0$, \\
		\item $| u_{\xi}(t)| \leq \sqrt{2t-at^2}.$
	\end{enumerate}
\end{proposition}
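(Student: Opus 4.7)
The approach I would take is a classical Liouville transformation driven by the Ermakov-Pinney structure already highlighted in the paper. Observe first, by direct differentiation, that $f_w(t) := \sqrt{2t-at^2}$ itself solves
\begin{equation*}
-f_w''(t) = w(t)\,f_w(t) \qquad \mbox{on } (0,2/a),
\end{equation*}
which is exactly the limiting case $\xi=0$ of (1). To handle general $\xi$, I would seek solutions of $-y''-(1+\xi^2)w y = 0$ in the factored form $y(t) = f_w(t)\, z(s(t))$. Expanding $(f_w z)''$ and using $-f_w''=w f_w$ kills the lower-order $wy$-term, reducing the equation to
\begin{equation*}
(f_w^2 z')' + \xi^2 w f_w^2 z = 0.
\end{equation*}
The natural new independent variable is $s(t) := \int d\tau/f_w^2(\tau)$; partial fractions give $s(t) = \tfrac{1}{2}\log(t/(2-at))$ up to an additive constant, and since $w f_w^2 = 1/f_w^2$, the equation becomes the harmonic oscillator $z_{ss}+\xi^2 z = 0$. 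The choice of constant of integration is encoded by the parameter $M>0$, so that $s(t) = \tfrac{1}{2}\log(Mt/(2-at))$; the two linearly independent solutions are $\cos(\xi s)$ and $\sin(\xi s)$, establishing (1).

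For (2) and (3) I would carry out a direct evaluation at the distinguished points. At $t_0 := 2/(M+a)$ one computes $Mt_0/(2-at_0) = 1$, so the cosine argument vanishes, $\sin$ vanishes as well, and therefore $u_\xi(t_0) = f_w(t_0)$ and $u_\xi'(t_0) = f_w'(t_0)$. The Robin-type identity then reduces to the arithmetic claim $f_w'(t_0)/f_w(t_0) = (M^2-a^2)/(4M)$; using $f_w'/f_w = (1-at)/(t(2-at))$ and substituting $t_0 = 2/(M+a)$ gives precisely this value. For (3), the point $t_1 := 2/(Me^{\pi/\xi}+a)$ is chosen so that $Mt_1/(2-at_1) = e^{-\pi/\xi}$, whence the cosine argument equals $-\pi/2$ for $u_\xi$ and $-3\pi/2$ for $u_{3\xi}$, so both vanish.

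Properties (4) and (5) are immediate from the explicit form: as $\xi\to 0$ the cosine argument goes pointwise to $0$, so $u_\xi(t) \to f_w(t)$; and $|\cos|\le 1$ yields $|u_\xi(t)|\le f_w(t)$. There is no real obstacle here—the main conceptual point is recognizing that the Liouville transformation transports the entire family $-y''-(1+\xi^2)wy=0$ to a simple harmonic oscillator, after which all five items reduce to elementary identities. The only computation requiring some care is the algebraic simplification in (2); everything else is bookkeeping.
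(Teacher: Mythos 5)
Your proof is correct in every detail, and since the paper itself provides no proof (it simply cites {\cite[Proposition~6.3]{PV}}), your argument is in fact more self-contained than what appears here. The Liouville-transformation route you chose is the natural one: the identity $-f_w''=wf_w$ (equivalently $f_w''=-f_w^{-3}$, the Ermakov--Pinney relation the paper itself alludes to) makes $y=f_w z$ reduce $-y''-(1+\xi^2)wy=0$ to $(f_w^2 z')'+\xi^2 w f_w^2 z=0$, and then $s=\tfrac12\log\bigl(Mt/(2-at)\bigr)$ with $w f_w^4\equiv 1$ turns this into $z_{ss}+\xi^2 z=0$, giving the cosine at once. Your evaluations at the two endpoints $t_0=2/(M+a)$ and $t_1=2/(Me^{\pi/\xi}+a)$ are correct: $Mt_0/(2-at_0)=1$ kills the sine, so $u_\xi'(t_0)/u_\xi(t_0)=f_w'(t_0)/f_w(t_0)=(1-at_0)/\bigl(t_0(2-at_0)\bigr)=(M^2-a^2)/(4M)$; and $Mt_1/(2-at_1)=e^{-\pi/\xi}$ puts the argument at $-\pi/2$ (resp.\ $-3\pi/2$ for $u_{3\xi}$), so both cosines vanish. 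Items (4) and (5) are indeed immediate. An alternative (and the one likely implicit in {\cite{PV}}) would be to verify (1) by brute-force differentiation of the explicit formula, which is more mechanical but gives no insight into why the formula works; your conceptual derivation is preferable and also explains where the free parameter $M$ comes from (the constant of integration in $s$).
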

%

%
\begin{thm}\label{3.7} 
	Assume that $(P,B)$, $G$, $G_{\varphi}$, $W$, and $u$ satisfy the assumptions of Theorem~\ref{prop:criticality}.  
	Assume further that  one of the following conditions is satisfied.
	\begin{enumerate}
		 
		\item  $(P,B)$ is symmetric,  $A\in C^{0,1}_{\loc}(\wb,\R^{n^2})$, $\bb=\bt\in C^{\alpha}_{\loc}(\wb,\R^n)$,  $c\in L^{\infty}_{\loc}(\wb)$, and $\pwr\in C^{1,\alpha}$.
		\\
		\item  $\pwr$, $\pwd$ are both relatively open and closed  sets, $\pwr$ is bounded and admits a finite number of connected components; the coefficients of $P$ are smooth functions in $\Gw$ (or more generally $A\in C^{\lceil(3n-2)/2\rceil,1}_{\loc}(\Gw,\R^{n^2})$, $\bt\in C^{\lceil(3n-2)/2\rceil,1}_{\loc}(\Gw,\R^{n})$, $\bb\in C^{\lceil(3n-2)/2\rceil-1,1}_{\loc}(\Gw,\R^{n})$, $c\in C^{\lceil(3n-2)/2\rceil-1,1}_{\loc}(\Gw)$).
	\end{enumerate}
	Then $(P-W,B)$ is  null-critical with respect to $W$.
\end{thm}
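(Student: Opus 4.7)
The plan is to establish $\int_{\Gw}W\phi\phi^{*}\,dx=\infty$ via a coarea-formula reduction to the one-dimensional non-integrability at $t=0$ of the weight $w(t)=(2t-at^{2})^{-2}$. Writing $t:=G_{\vgf}/u$, Ancona's condition \eqref{Anc_cond} asserts $t\to 0$ at $\infty_{\Dir}$, and the formula \eqref{eq:W_explicitly} gives $W\phi^{2}=u^{2}|\nabla t|_{A}^{2}/[t(2-at)]$ on $\Gw\setminus\supp(\vgf)$. Since $\supp(\vgf)\Subset_{R}\Gw$ and $W,\phi,\phi^{*}$ are bounded there, the divergence can only arise from a neighborhood of $\pwd$.

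For the symmetric case~(1), uniqueness of the ground state (Lemma~\ref{inview}) applied to $(P-W,B)=((P-W)^{*},B^{*})$ forces $\phi^{*}=\phi$ after normalization, and it suffices to show $\int W\phi^{2}\,dx=\infty$. I would apply the coarea formula
\[
\int_{\Gw}\frac{u^{2}|\nabla t|_{A}^{2}}{t(2-at)}\,dx \,=\, \int_{0}^{\sup_{\Gw}t} w(s)\,F(s)\,ds, \qquad F(s):=\int_{\{t=s\}}\frac{u^{2}|\nabla t|_{A}^{2}}{|\nabla t|}\,d\mathcal{H}^{n-1},
\]
and identify $F(s)$ via the divergence theorem. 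The ground state transform by $u$ produces $\diver(u^{2}A\nabla t)=-u\vgf$ in $\Gw$, while the Robin condition reduces to $B^{u}t=\gb\langle A\nabla t,\vec{n}\rangle=0$ on $\pwr$. Applying Gauss--Green on the sublevel set $\{t>s\}$, whose boundary is $(\{t=s\}\cap\Gw)\cup(\pwr\cap\{t>s\})$ and on which the Robin part contributes nothing, yields
\[
F(s)=\int_{\{t>s\}} u\vgf\,dx \;\xrightarrow[s\to 0^{+}]{}\; \int_{\Gw} u\vgf\,dx>0.
\]
Since $w(s)\sim (2s)^{-2}$ as $s\to 0$, the integral $\int_{0}^{\delta}w(s)F(s)\,ds$ diverges.

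For the non-symmetric regular case~(2), I would pass to the ground state transform by $\phi$: set $g^{*}:=\phi^{*}/\phi$, the ground state of $((P-W)^{\phi})^{*}$. The enhanced regularity of $A,\bt,\bb,c$ (and the clean split of $\partial\Gw$) in~(2), together with Lemma~\ref{lem:sim_argument}, puts $g^{*}\in C^{\alpha}(\wb)$; combined with positivity and the Harnack principle for the transformed adjoint, this forces $g^{*}\geq c_{0}>0$ on a neighborhood of $\pwd$ on which $t$ is small. Then
\[
\int_{\Gw}W\phi\phi^{*}\,dx \,=\, \int_{\Gw} W\phi^{2}\, g^{*}\,dx \,\geq\, c_{0}\!\int_{\{t<\gd\}}\! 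W\phi^{2}\,dx,
\]
and a similar coarea--divergence argument adapted to the non-symmetric setting (using $\phi^{2}$ in place of $u^{2}$ as the integration weight to absorb the antisymmetric first-order part) yields $\infty$.

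The main obstacle I anticipate is the lower bound $g^{*}\geq c_{0}>0$ near $\pwd$ in case~(2): this is a boundary-Harnack/Doob-type comparison for the transformed adjoint operator, and it is precisely where the strong regularity hypotheses and the ``relatively open and closed'' split of $\pwr,\pwd$ in~(2) enter the argument. A secondary technical point is the justification of Gauss--Green on the level sets $\{t>s\}$: since $t\in H^{1}_{\loc}\cap C^{\alpha}$ is generally not Lipschitz, one either invokes Sard's theorem (yielding smooth $\{t=s\}$ for a.e.~$s$) or the finite-perimeter Gauss--Green formula, while keeping careful track of the vanishing of the flux through $\partial\{t>s\}\cap\pwr$ via $B^{u}t=0$.
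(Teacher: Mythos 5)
Your Case (1) matches the paper's argument in substance. Both you and the authors reduce, via the ground state transform by $u$, to the case $(P,B)\mathbf{1}=0$ (you keep $u$ explicit, which is only cosmetic), then use the coarea formula over sublevel sets of $t=G_{\vgf}/u$, then invoke the divergence theorem together with the Neumann condition $\langle A\nabla t,\vec n\rangle=0$ on $\pwr$ to show that the flux $F(s)$ stabilizes at the positive constant $\int_\Gw u\vgf\,\dx$ for small $s$, and finally exploit the non-integrability of $(2t-at^2)^{-1}$ near $t=0$. Your worry about Sard/finite-perimeter is resolved exactly as in the paper: under the stated regularity $G_\vgf\in C^{1,\alpha}_{\loc}(\wb)\cap W^{2,2}_{\loc}(\wb)$, so the level sets are of finite perimeter for a.e.\ value and the Gauss--Green theorem applies.

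For Case (2), however, your route diverges from the paper's, and the step you yourself flag as the obstacle is a genuine gap. There is no a priori reason for $g^*=\phi^*/\phi$, a ground state of the critical operator $\big(((P-W)^\phi)^*,(B^\phi)^*\big)$, to be bounded below near $\infty_{\Dir}$: ground states of critical operators can and do decay, and a local Harnack principle only gives interior two-sided comparability on compact subsets, not a uniform lower bound in a neighborhood of $\pwd$. Nothing in the hypotheses (regularity of the coefficients, topology of $\pwr,\pwd$) delivers a boundary-Harnack inequality strong enough to pin down $g^*\geq c_0>0$ near $\pwd$. The paper avoids this entirely: the assumption that $\pwr,\pwd$ are both relatively open and closed with $\pwr$ bounded guarantees that for $M$ large the level-set region $\Gw_\xi$ given in \eqref{eq:Omega_xi} satisfies $\partial\Gw_\xi\cap\pwr=\emptyset$. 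One can then repeat verbatim the non-symmetric null-criticality argument of \cite[Theorem~8.2]{DFP}, which does not rest on a pointwise lower bound for $\phi^*/\phi$ but on a careful approximation/comparison scheme for the adjoint Green potential over regions away from $\pwr$. If you wish to give a self-contained proof of (2), you should follow that scheme rather than the boundary-Harnack route.
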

\begin{proof}

	 $(1)$ 
	 By Remark \ref{rem:certain_imp}, $u\in C^{1,\alpha}_{\loc}(\wb)$,
	 Therefore,   the ground state transformed operator, $(P^u,B^u)$ (Definition \ref{gs+transform}), satisfies the required regularity assumptions as well. 
	 As a result, we may assume without loss of generality  that $(P,B)\mathbf{1}=0$.
	Let $\xi>0$ be fixed and  consider the  set
	\begin{equation}\label{eq:Omega_xi}
	\Gw_{\xi}:=\left \{x\in \Gw \, \big|   \; \frac{2}{M\mathrm{e}^{\pi/\xi }+a }<G_{\varphi}(x)<\frac{2}{M+a}  \right \}. 
	\end{equation}
	Letting $M$ large enough, we may assume that
	 $$\mathrm{supp}(\vgf)\cap\Gw_{\xi} = \emptyset, \quad \mbox{and }  \Gw_{\xi}\Subset \wb\, .$$
	By Remark \ref{rem:certain_imp}, $G_{\vgf}\in C^{1,\alpha}_{\loc}(\wb)\cap W^{2,2}_{\loc}(\wb)$.
  $G_{\vgf}\in C^{1,\alpha}(\overline{\Gw_{\xi}})$ implies that $\Gw_{\xi}$ is a set of finite perimeter  for a.e $\xi$ \cite[proof of Theorem 5.9]{EG}. Therefore, we may use the coarea formula (cf. \cite[Lemma 9.2]{DFP},\cite[Theorem 5.9]{EG}) to obtain 
	\begin{align}\label{eq:nonumber}
		&
		\int_{\Gw_{\xi}}W\left( f_w(G_{\vgf})\right ) ^2 \dx=\nonumber
		\int_{\Gw_{\xi}}\frac{|\nabla G_{\vgf}|_A^2}{(2G_{\vgf}-aG_{\vgf}^2)}\dx=\\& 
		\int_{2/(M\mathrm{e}^{\pi/\xi}+a)}^{2/(M+a)}\frac{1}{2t-at^2}\dt\int_{G_{\vgf}=t}A\nabla G_{\vgf}\cdot\vec{n}\dsigma, 
	\end{align}
	 where for a.e. $t$, the vector  $\nabla G_{\vgf}(x)$ is parallel (in the metric $|\cdot|_A$) to  
	 the normal vector $\vec{n}(x)$ for $\mathcal{H}^{n-1}$- a.e. $x$ in the level set $\left\{G_{\vgf} = t \right\}$ \cite{CTZ}.
Moreover, $G_{\vgf}\in W^{2,2}_{\loc}(\wb)$ is a strong solution to the equation
$Pu=\vgf$ in $\Gw$  and satisfies $A \nabla G_{\vgf}\cdot \vec{n}=0$ everywhere on $\pwr$. Therefore,  we may use the divergence theorem for a.e. $t_1,t_2$, satisfying $$\frac{2}{M\mathrm{e}^{\pi/\xi}+a}<t_1<t_2<\frac{2}{M+a},$$ and obtain  
$$
0=-\int\limits_{\{x\in \Gw: \ t_1<G_{\vgf}<t_2\}}\diver(A\nabla G_{\vgf})\dx=\int\limits_{G_{\vgf}=t_2}A\nabla G_{\vgf}\cdot\vec{n}\dsigma-\int\limits_{G_{\vgf}=t_1}A\nabla G_{\vgf}\cdot\vec{n}\dsigma
$$
 (\cite[Proposition 3.1]{DP}).
 In particular, 
 $$
 \int_{G_{\vgf}=t_1}A\nabla G_{\vgf}\cdot\vec{n}\dsigma= \int_{G_{\vgf}=t_2}A\nabla G_{\vgf}\cdot\vec{n}\dsigma
 $$
 is a nonzero constant. Therefore, letting $\xi \to 0$ in \eqref{eq:nonumber} implies that 
 $$
\int_{\Gw}W\left (f_w(G_{\vgf})\right)^2 \dx\geq \lim_{\xi \to 0}\int_{\Gw_{\xi}}W\left(f_w(G_{\vgf})\right) ^2 \dx\asymp
 \int_{0}^{1}\frac{1}{2t-at^2}\dt=\infty.
 $$
 $(2)$ The proof in the non-symmetric case is identical to \cite[proof of Theorem 8.2]{DFP}. Indeed, our assumptions imply that  $\partial \Gw_{\xi}\cap \pwr=\emptyset$, where $\Gw_{\xi}$ is given by   \eqref{eq:Omega_xi}. Therefore, we may repeat almost literally the steps in  \cite[proof of Theorem 8.2]{DFP}.
 	\end{proof}
 \begin{proof}[Proof of Theorem~\ref{main_thm}]
 	The theorem follows from the criticality and the null-criticality of $(P-W,B)$ with respect to $W$ proved in Theorem~\ref{prop:criticality} and Theorem~\ref{3.7}, respectively.
 \end{proof}
%
%
\begin{rem}\label{rem:needed_for_barbatis}
	\em{ 1, 	The parameter  $a$ in the proof of Theorem \ref{main_thm} was chosen such that $G_{\varphi}/u<1/a$ in $\Gw$. In fact, we may choose any constant $a\geq 0$ satisfying $G_{\varphi}/u \leq 1/a$ in $\Gw$.
			
			2. In the nonsymmetric case of Theorem~\ref{3.7}, the stated smoothness assumptions on the coefficients simplify  the calculations in \cite[Theorem 8.2]{DFP}. In fact, these regularity assumptions can be  further weakened.
	} 
\end{rem}
The following proposition, a particular case of \cite[Proposition 3.1]{PV}, is a characterization of $\mathcal{W}(\R_{+})$, the set of all optimal Dirichlet-Hardy-weights of the Laplacian in $\R_+$.
\begin{proposition}\label{1D if and only if}
	Let $0\lneqq w\in L^1_{\loc}(\R_+)$. Then 	$w\in \mathcal{W}(\R_{+})$ with a corresponding ground state $\psi_w$  if and only if the following  three  conditions are satisfied.
	\begin{enumerate}
		\item $\psi_w >0$ satisfies $-\psi_w''-w\psi_w=0$ in $\R_{+}$,
		
		\item $\displaystyle{\int_{0}^{1}\dfrac{1}{\psi_w^2}\dt = \int_{1}^{\infty}\dfrac{1}{\psi_w^2}\dt=\infty}$,
		
		\item $\displaystyle{\int_{0}^{1}  \psi_w^2 w\dt=\int_{1}^{\infty}\psi_w^2 w\dt=  \infty}$.
	\end{enumerate}
\end{proposition}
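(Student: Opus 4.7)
The plan is to translate the criticality/null-criticality of the one-dimensional Dirichlet operator $L - w = -\frac{d^2}{dt^2}-w$ into concrete ODE conditions on its ground state $\psi_w$, via reduction of order and the ground state transform. Since $L$ is formally self-adjoint, $\psi_w^* = \psi_w$, so null-criticality collapses to $\int_{\R_+} \psi_w^2 w \, dt = \infty$, which is exactly (3) (modulo the split into two halves). This turns the proof into a Sturm--Liouville exercise.

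For the ``$\Rightarrow$'' direction, assume $w \in \mathcal{W}(\R_+)$. Then $(L-w)$ is critical, so by Lemma~\ref{gs_implies_crit} it admits a ground state $\psi_w > 0$ satisfying $-\psi_w'' - w\psi_w = 0$, which is (1). For (2), reduction of order gives that every solution of the ODE has the form $\psi_w\bigl(c_1 + c_2 \int_1^t ds/\psi_w^2(s)\bigr)$; Lemma~\ref{inview} asserts that $\psi_w$ is, up to scale, the unique positive supersolution. If $\int_0^1 ds/\psi_w^2 < \infty$, then the antiderivative $F(t) = \int_1^t ds/\psi_w^2$ remains bounded below as $t \to 0^+$, so for suitable $c_1, c_2$, $\psi_w(c_1 + c_2 F)$ is a positive solution linearly independent from $\psi_w$, contradicting uniqueness; the argument at $+\infty$ is identical. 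For (3), null-criticality gives $\int_0^\infty \psi_w^2 w \, dt = \infty$; to upgrade this to divergence of each half, I would invoke the equivalence between null-criticality and optimality at infinity recorded in the remark after Definition~\ref{rem_1.3}, applied separately near each end of $\R_+$ (taking $K = [a,b] \Subset_R \R_+$ in Definition~\ref{rem_1.3} decouples $\R_+ \setminus K = (0,a) \cup (b,\infty)$ into two independent Hardy inequalities, and on each piece the null-critical integral condition must hold, since otherwise one could enlarge $w$ by a factor $\lambda>1$ on that piece).

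For the ``$\Leftarrow$'' direction, assume (1)--(3). By (1), $\psi_w \in \mathcal{H}^0_{L-w,\,\mathrm{Dir}}(\R_+)$, hence $(L-w) \geq 0$. Applying the ground state transform with respect to $\psi_w$ (Definition~\ref{gs+transform}), the transformed equation on $\R_+$ reads $-(\psi_w^2 g')' = 0$, so any positive supersolution $v$ corresponds to $g = v/\psi_w$ with $\psi_w^2 g'$ nonincreasing. Splitting at $t=1$ and integrating $g'$ against $1/\psi_w^2$ on each side, condition (2) forces $g'\equiv 0$, i.e.\ $g$ is constant and $v = c\psi_w$. By Lemma~\ref{inview}, $(L-w)$ is critical with ground state $\psi_w$, and (3) is then precisely the null-criticality condition $\int_0^\infty \psi_w \psi_w^* w \, dt = \infty$. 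Hence $w \in \mathcal{W}(\R_+)$.

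The main obstacle is the ``each half diverges'' refinement in (3) for the forward direction: plain null-criticality only gives divergence of the sum, and extracting the endpoint-by-endpoint statement requires the localization of optimality-at-infinity to each of the two ideal boundary points $0$ and $\infty$ of $\R_+$, together with the corresponding local criticality analysis on $(0,a)$ and $(b,\infty)$.
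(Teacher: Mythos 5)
The paper does not actually prove this proposition; it cites it as a particular case of \cite[Proposition~3.1]{PV}, so there is no in-paper proof to compare against, and your attempt must stand on its own. Your ``$\Leftarrow$'' direction and your treatment of (1) and (2) in the ``$\Rightarrow$'' direction are sound (the reduction-of-order / ground-state-transform arguments are standard). The genuine gap is exactly the one you flag as ``the main obstacle'', and your proposed fix does not repair it. Definition~\ref{rem_1.3} requires $\sup\{\lambda : (L-\lambda w)\geq 0 \text{ on } \R_+\setminus K\}=1$ with a \emph{single} $\lambda$ applied to the whole set $\R_+\setminus K=(0,a)\cup(b,\infty)$. Since a positive solution on a disconnected open set is nothing but a positive solution on each component, $\lambda_0\bigl((0,a)\cup(b,\infty)\bigr)=\min\bigl(\lambda_0((0,a)),\lambda_0((b,\infty))\bigr)$, so ``$\sup=1$'' forces only that \emph{one} of the two ends is saturated, not both. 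Your step ``one could enlarge $w$ by a factor $\lambda>1$ on that piece'' implicitly invokes a piecewise choice of $\lambda$, which Definition~\ref{rem_1.3} does not allow; nothing in the paper's axioms therefore yields the end-by-end divergence in (3).

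The gap is real, not cosmetic. Take $\psi>0$ smooth and concave on $\R_+$ with $\psi(t)=t$ for $t\le 1$ and $\psi(t)=\sqrt{t}$ for $t\ge 4$, and put $w:=-\psi''/\psi\ge 0$. Then $w\equiv 0$ on $(0,1)$ and $w=1/(4t^2)$ on $(4,\infty)$; $\psi$ satisfies (1); and (2) holds since $\int_0 t^{-2}\dt=\int^\infty t^{-1}\dt=\infty$, so $(L-w)$ with Dirichlet condition at $0$ is critical with ground state $\psi$. Moreover $\int_{\R_+}\psi^2 w\dt\ge\int_4^\infty (4t)^{-1}\dt=\infty$, so it is null-critical, and by Remark~\ref{rem_1.3} the condition of Definition~\ref{rem_1.3} is satisfied; yet $\int_0^1\psi^2 w\dt=0$, so the first half of (3) fails. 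Thus the ``$\Rightarrow$'' direction of the proposition is not derivable from Definitions~\ref{def_opt_Hardy}--\ref{rem_1.3} of the present paper alone; it needs the sharper, end-by-end notions of criticality and null-criticality (one condition per ideal endpoint $0$ and $\infty$) that are in force in \cite[Proposition~3.1]{PV}. To close the gap you must either import those localized definitions and the corresponding localized null-criticality implication from \cite{PV}, or replace the optimality-at-infinity step entirely by a direct Sturm--Liouville argument at each endpoint separately.
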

We are now in a position to prove Theorem~\ref{thm3}.
\begin{proof}[Proof of Theorem~\ref{thm3}]
	1. Can be verified as in the proof of Theorem \ref{prop:criticality}. 
	\\[1mm]
	2. Use word by word the proof of  Theorem \ref{prop:criticality}.
	\\[1mm]
	3. The proof is identical to \cite[Theorem 5.2]{PV},  where $\pwr=\emptyset$ is assumed.
	\\[1mm]
	4. It remains to prove that  in the symmetric case, $(P-W,B)$ is null-critical with respect to $W$.
	Without loss of generality we may assume that $(P,B)\mathbf{1}=0$ in $\Gw$.
	Take $\ga >0$ sufficiently small such that 
	$$
	\{x\in \Gw \mid 0<G_{\varphi}(x)<\ga \} \cap \supp \varphi=\emptyset.
	$$
	For any $0<\varepsilon<\ga $, the coarea formula \eqref{eq:nonumber} implies
	\begin{align*}
		& \int\limits_{\varepsilon<G_{\varphi}<\ga } (\psi_w(G_{\vgf}))^2 W \dx=C
		\int\limits_{\varepsilon}^{\ga}\psi_w^2(t)w(t)  \dt.
	\end{align*}
Recall that $w\in \mathcal{W}(\R_+)$ with a corresponding ground state
$\psi_w(t)$. Therefore, letting $\varepsilon\to 0$, and using part (3) of Proposition~\ref{1D if and only if}  we obtain
$$
\int_{\Gw}
 (\psi_w(G_{\vgf}))^2 W \dx \geq C \lim\limits_{\varepsilon\to 0}
\int\limits_{\varepsilon}^{\ga}\psi_w^2(t)w(t)  \dt=\infty. \qquad \qedhere
$$
\end{proof}

\section{examples}\label{sec:exmaples}
In this short section we illustrate  two examples for which  Theorem~\ref{main_thm} provides an optimal Hardy-weight.
\begin{example}\label{ex:1}
	\em{
		Let $n\geq 3$, and  either 
		$$\Gw=B_1^{+}(0), \; \pwr=\{x \in B_1(0) \mid x_n = 0\},  \mbox { or }\;  
		\Gw=\R^n_+, \;  
		\pwr=\{x\in \R^n \mid  x_n=0\}.$$
		Consider the operator  $Pu:=-\Delta u$ together with the boundary operator $Bu=\nabla u\cdot \vec{n}$ on $\pwr$.  
		Clearly,   $(P,B)$ is subcritical in $\Gw$, and $(P,B)\mathbf{1}=0$ in $\Gw$. Indeed, 		
		for $x\in \Gw$ let $\hat{x}=(x',-x_n)$, 
		then, for each $x,y\in \Gw$ with $x\neq y$,
		$$
		G_{P,B}^{\Gw}(x,y)=
		\begin{cases}
		G_{P}^{B_1(0)}(x,y)+G_{P}^{B_1(0)}(\hat{x},y) & \Gw=B_1^{+}(0) \\
		G_{P}^{\R^n}(x,y)+G_{P}^{\R^n}(\hat{x},y) & \Gw=\R^n_+,
		\end{cases}
		$$
		where $G_{P}^{B_1(0)}(x,y)$ (resp., $G_{P}^{\R^n}(x,y)$) is the Dirichlet-Green function of $P$ in $B_1(0)$ (resp., $\R^n$).
		Obviously, $\lim\limits _{x\to \infty_{\Dir}}G_{\vgf}(x,y)=0$, and
		hence, Theorem~\ref{main_thm} implies that the function $W=\dfrac{P(f_w(G_{\vgf}))}{f_w(G_{\vgf})}$ is an optimal weight for $(P,B)$ in $\Gw$.
		
		We note that in the case $\Gw=B_1^{+}(0)$, $W(x)\sim (2\cdot \mathrm{dist}(x,\pwd))^{-2}$ as $x\to \xi$, where  $\xi_n>0$ and $|\xi|= 1$ \cite[Lemma~3.2]{LP}. \\
		On the other hand, in the case $\Gw= \R^n_+$, $W$ is a  continuous function in $\Gw$ and $W(x)\sim  \frac{(n-2)^2}{4}|x|^{-2} $ as $x\to \infty$  such that $x/|x|\to (\xi', \xi_n)$ with 
		$\xi_n>0$.
		
		
	}
\end{example}

\begin{example}\label{example:2}
	\em{
		Let $n\geq 3$, and  $\Gw=\{x\in \R^n \mid  |x|>1\}$ with $\pwr=\partial \Gw$.
		Assume that $Pu=-\Delta u$ and $Bu=\nabla u \cdot \vec{n}+\gamma(x) u$ on $\pwr$, where $ \gamma\in L^{\infty}(\pwr)$ satisfies $\gg > (1-n)/2$, and take $\varepsilon>0$ such that
		$\varepsilon(n+2\gamma-1)\geq 1$ on $\pwr$.
		Then, $$v:=\sqrt{(|x|-1+\varepsilon)|x|^{1-n}}\in \mathcal{RSH}_{P,B}(\Gw)$$ and
		$$
		\begin{cases}
		-\Delta v-\dfrac{(n-1)(n-3)v}{4|x|^2}-\dfrac{v}{4(|x|-1+\varepsilon)^2}=0 & \mbox{in~} \Gw, \\[6mm]
		\nabla v \cdot\vec{n}+\gamma v=\dfrac{-1+\varepsilon(n+2\gamma-1)}{2\sqrt{\varepsilon}}\geq 0 & \mbox{on~} \pwr. 
		\end{cases}
		$$
		Hence, the {\bf AP} theorem \cite{PV2} implies  the Hardy-type inequality in  $H^1(\Gw)$
		\begin{equation}\label{eq:K_L_B}
		\int_{\Gw}|\nabla \phi|^2\dx+\int_{\pwr}\!\!\gamma\phi^2 \!\dsigma \geq \int_{\Gw} \left [\dfrac{(n-1)(n-3)}{4|x|^2}+\dfrac{1}{4(|x|-1+\varepsilon)^2} \right] \!\phi^2\!\dx.
		\end{equation}
			} 
	\end{example}

		
		\begin{rem}[Improved Hardy-inequality in the exterior of the unit ball]\label{rem:improved_Laptev}
			\em{
				Assume further  that $\gamma\geq 0$ is constant.
				In \cite[Theorem 5.1]{KL}, \eqref{eq:K_L_B} has been obtained for the case $\varepsilon=(2\gamma)^{-1}$.
				Obviously,  by letting 
				$\varepsilon=\varepsilon_{\gamma}:= (n-1+2\gamma)^{-1}$ in \eqref{eq:K_L_B} we obtain an improvement of the Hardy-type inequality in \cite[Theorem 5.1]{KL}. In particular, the function $$v_{\gamma}:=\sqrt{(|x|-1+\varepsilon_{\gamma})|x|^{1-n}}$$ is a positive solution of the equation 
				\begin{equation}\label{eq:v_gamma_gs}
				\begin{cases}
				-\Delta v-\dfrac{v(n-1)(n-3)}{4|x|^2}-\dfrac{v}{4(|x|-1+\varepsilon_{\gamma})^2}=0 & \mbox{in~} \Gw, \\
				\nabla v \cdot\vec{n}+\gamma v=0 & \mbox{on~} \pwr. 
				\end{cases}
				\end{equation}
				Furthermore, the function $w_{\gamma}:=v_{\gamma}\log(|x|-1+\varepsilon_{\gamma})$ is a positive solution of the equation 
				\begin{equation}\label{eq:min_ex}
				-\Delta v-\dfrac{v(n-1)(n-3)}{4|x|^2}-\dfrac{v}{4(|x|-1+\varepsilon_{\gamma})^2}=0
				\end{equation}
				in a neighborhood of $\infty$, satisfying 
				$$
				\lim_{|x|\to \infty}\frac{v_{\gamma}}{w_{\gamma}}=0.
				$$
				By Lemma \ref{lem:Khasminsky}, any positive solution $\phi$ of \eqref{eq:min_ex} having minimal growth at $\infty$ satisfies $\phi\asymp v_{\gamma}$ in a neighborhood of $\infty$. As a result, $v_{\gamma}$ is the ground state of \eqref{eq:v_gamma_gs}. Moreover, it is easy to see that \eqref{eq:v_gamma_gs} is also null-critical, implying that the function 
				$$
				W:=\dfrac{(n-1)(n-3)}{4|x|^2}+\dfrac{1}{4(|x|-1+\varepsilon_{\gamma})^2}
				$$
				is an optimal Hardy-weight of $(P,B)$ in $\Gw$.
For an explicit formula for $G_{P,B}^{\Gw}(x,y)$ in the cases $\gamma=0,(n-2)/2$, see \cite{Sa}.				
			}
		\end{rem}

\begin{center}
{\bf Acknowledgments}
\end{center}
The paper is based on part of the Ph.~D. thesis of the second author under the supervision of the first author. I.~V.  is grateful to the Technion for supporting his study.
The  authors  acknowledge  the  support  of  the  Israel  Science Foundation (grant  637/19) founded by the Israel Academy of Sciences and Humanities. 

\end{document}